\newcommand\N{{\mathbb N}}
\newcommand\Z{{\mathbb Z}}
\newtheorem{theorem}{Theorem}[section]
\newtheorem{corollary}[theorem]{Corollary}
\newtheorem{lemma}[theorem]{Lemma}
\newtheorem{proposition}[theorem]{Proposition}
\newtheorem{definition}[theorem]{Definition}
\newtheorem{example}[theorem]{Example}
\newtheorem{remark}[theorem]{Remark}
\newcommand{\cal}{\mathcal}
\begin{document}

\title{Fundamental Heaps for  Surface Ribbons  and  Cocycle Invariants}

\author{Masahico Saito} 
\address{Department of Mathematics, 
	University of South Florida, Tampa, FL 33620, U.S.A.} 
\email{saito@usf.edu} 

\author{Emanuele Zappala} 
\address{Institute of Mathematics and Statistics, University of Tartu\\
	Narva mnt 18, 51009 Tartu, Estonia} 
\email{emanuele.amedeo.zappala@ut.ee \\ zae@usf.edu}

\maketitle

\begin{abstract}
We introduce the notion of fundamental heap for compact orientable surfaces with boundary embedded in $3$-space, which is an isotopy invariant of the embedding.  It is a group, endowed with a ternary heap operation, defined using diagrams of surfaces in a form of thickened trivalent graphs
called surface ribbons.
We prove that the fundamental heap has a free part whose rank is given by the number of connected components of the surface. We study the behavior of the invariant under boundary connected sum, as well as addition/deletion of twisted bands, and provide formulas relating the number of generators of the fundamental heap to the Euler characteristics. We describe in detail the effect of stabilization on the fundamental heap, and determine that for each given finitely presented group 
there exists a surface ribbon 
whose fundamental heap is isomorphic to it, up to extra free factors.
A relation between the fundamental heap and the Wirtinger presentation is also described. Moreover, we introduce cocycle invariants for surface ribbons  
using the notion of mutually distributive cohomology and heap colorings. 
Explicit computations of fundamental heap and cocycle invariants are presented.
\end{abstract}

\date{\empty}

\tableofcontents

\section{Introduction}

The purpose of this article is to introduce, and investigate, invariants of compact orientable surfaces with boundary  embedded in $3$-space up to isotopy, using 
ternary self-distributive (TSD) operations and their cohomology theory. More specifically, we focus on {\it heaps}, ternary structures that are epitomized by the operation $(x,y,z)\mapsto xy^{-1}z$ in a group $G$, and the notion of {\it mutually distributive} cocycles of TSD operations, applied to heaps. 
Any compact orientable surface embedded in 3-space can be represented by a thin ribbon neighborhood of a trivalent graph that is its spine, which we call a {\it surface ribbon}. Their diagrams and
Reidemeister type  moves were studied in \cite{Matsu}. We utilize this diagrammatic.

Self-distributive (binary) operations have been used since the 1980's to construct invariants of knots and links, following the articles \cite{Joyce,Mat}, where the notion of {\it fundamental quandle} was introduced,
defined topologically and diagrammatically. Homology and cohomology theories of quandles were then introduced, and used to construct 
 invariants of links in $3$-space, as well as knotted surfaces in $4$-space \cite{CJKLS}. These invariants are defined via certain partition functions, roughly described in the case of links in $3$-space as follows. The initial data of the construction is a quandle $X$, along with a $2$-cocycle of $X$ with coefficients in an abelian group $A$. First, one defines the set of $X$-colorings of a fixed diagram $D$ of a link $L$ as the set of homomorphisms from the fundamental quandle of $L$ (obtained through $D$) to $X$. 
 A coloring is also regarded as an assignment of elements of $X$ to arcs of $D$, and assigned elements are called {\it colors}.
  For each coloring, then, one takes the Boltzmann weights of each crossing of $D$, where the $2$-cocycle is evaluated at the pair of colors of the underpassing and overpassing arcs, then 
for each coloring,  
all these weights are multiplied together 
over all crossings. Upon summing over all $X$-colorings  
this quantity results to be invariant with respect to Reidemeister moves and, therefore, is independent of the choice of diagram of $D$.

\begin{figure}[hbt]
	\begin{center}
		\begin{tikzpicture}
		\draw[->] (2,2) -- (0,0);
		\draw (0,2)--(0.9,1.1);
		\draw[->] (1.1,0.9) -- (2,0);
		
		\draw[->] (5,2)--(7,0); 
		\draw (7,2)--(6.1,1.1);
		\draw[->] (5.9,0.9)--(5,0);
		
		\node at (0,1.7) {$x$};
		\node  at (2.1,0.45) {$x*y$};
		\node at (2,1.7) {$y$};
		
		\node  at (4.9,0.45) {$x$};
		\node  at  (5,1.7) {$y$};
		\node at (7.1,1.55) {$x*y$};
		\end{tikzpicture}
	\end{center}
	\caption{Positive (left) and negative (right) crossings and their colorings for binary quandles.}
	\label{fig:positivenegative}
\end{figure}
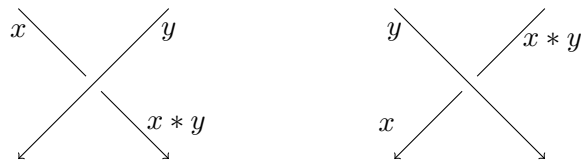

The intuitive and diagrammatic interpretation that underlies the 
above described 
paradigm relies on the scheme depicted in Figure~\ref{fig:positivenegative}, where the orientation is used to determine the sign of the crossing and the consequent sign of the Boltzmann weight. 
In the figure, a coloring rule is given at a crossing, where two colors $x, y$ determine
the third color to be $x*y$, where $*$ denotes the quandle operation. 
Using TSD operations, a similar diagrammatic interpretation is introduced, where the arcs are doubled and the colors change at given crossings by means of the TSD operation, following the rules depicted in Figure~\ref{buildingblocks} (A), where $z=x u^{-1} v$ and $w=y u^{-1} v$. 
The colors of the underpassing arcs change from $x$, resp. $y$, to $z=T(x,u,v)$, resp. $w=T(y,u,v)$, where $T$ is a given TSD operation. Among known examples of TSD operations we have compositions of binary self-distributive operations 
of  
mutually (binary) self-distributive operations \cite{ESZcascade}, as well as heaps, which are 
not compositions of lower arity operations. The ribbon cocycle invariant 
is a framed link invariant constructed in \cite{SZframedlinks}, 
 using the diagrammatic interpretation of heaps 
 given above. Concisely, in order to define the ribbon cocycle invariant the notion of fundamental heap is introduced, and consequently, that of heap coloring of a framed diagram of a framed link as well. For each heap coloring with a given heap $X$ the Boltzmann 
 weight is assigned
 at each crossing from 
 two terms associated at each instance of $T$ as described above, and the weights derived from evaluating 
 a fixed TSD $2$-cocycle are then multiplied
 over all crossings. 
  Summing over all colorings by $X$ of the given framed link diagram one obtains an object that is invariant under framed Reidemeister moves. Interestingly, the fact that the Boltzmann weights are defined independently on each of the two 
  underpassing arcs  
  at a crossing, by means of the ternary operation, induces an invariant that is an element of the tensor product of algebras $\mathbb Z[A]\otimes \mathbb Z[A]$, where $A$ is the abelian coefficient ring used for cohomology. This is a fundamental difference between the binary and the ternary approaches.

\begin{figure}[htb]
	\begin{center}
		\includegraphics[width=4in]{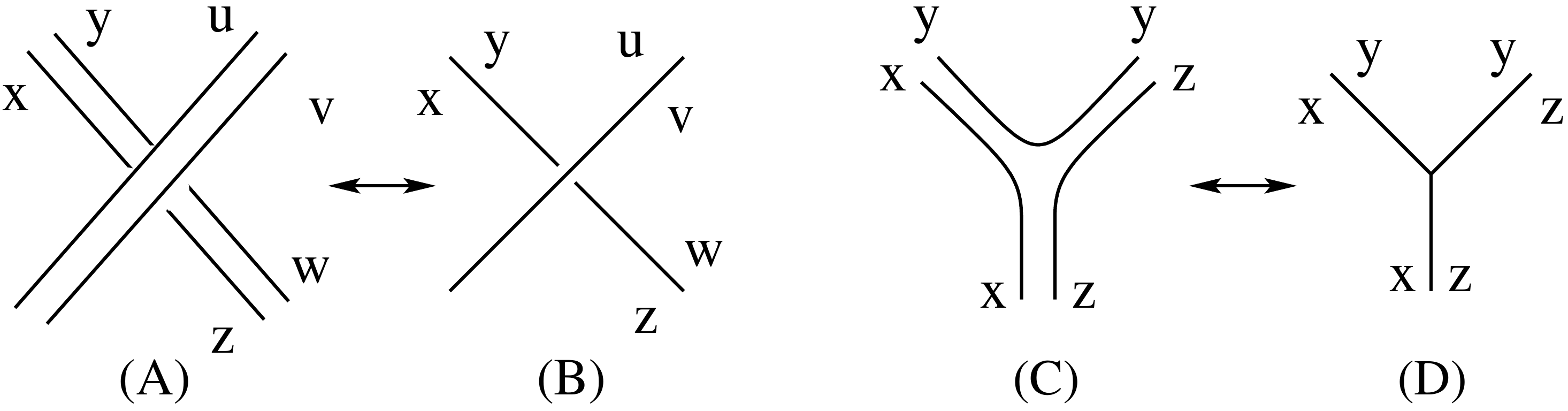}
	\end{center}
	\caption{Building blocks}
	\label{buildingblocks}
\end{figure}

In the present article, we employ the same principles used to obtain the fundamental heap and the ribbon cocycle invariant for framed links \cite{SZframedlinks} to 
define those for 
surface  ribbons.
Differences from earlier work \cite{SZframedlinks} are described as follows.
These objects are 
proved to be isotopy invariants 
using 
 local moves that consist of framed Reidemeister moves, as well as moves that involve trivalent fat vertices
 of surface ribbons~\cite{Matsu}, 
 which present the main difference with respect to the case of framed links. In addition, since the boundary components of surface ribbons need to be oriented in antiparallel fashion, a further difference appears between the cocycle invariants of framed links and those of surface ribbons. Lastly, whilst the ribbon cocycle invariant 
 for framed links 
 employs by definition a single heap $2$-cocycle with coefficients in some abelian group $A$, 
  the initial data to construct cocycle invariants of surface ribbons is a family of mutually distributive $2$-cocycles, in the sense of \cite{ESZcascade},
 assigned to each connected component of a surface ribbon. 
 Then, as in the case of framed links, the invariant takes values in  a tensor product of copies of the group algebra of $A$, where tensor factors correspond to boundary 
connected components. Thus the invariant is stratified to connected components of both surface ribbons and their boundary curves.

Related topics can be found, for example,  in the following papers.
Spatial graphs with a move that corresponds to handle slides have been studied 
also for handlebody-links \cite{Ishii}. Corresponding algebraic structures that have multiplication and braiding at the same time, with compatibility conditions, have also been studied~\cite{CIST,Lebed}. 
Invariants for compact surfaces with boundary  represented by ribbon graphs
using the moves provided in~\cite{Matsu} were defined and studied in~\cite{IMM,SZbraidedFrob}.
 Knot invariants using ternary operations have been studied, for example, in 
	  \cite{NOO,Nie1,Nie2}, in which colorings are assigned to complementary regions,
	  while in this paper, colorings are assigned on doubled arcs of 
	  surface ribbons.

\begin{figure}[htb]
\begin{center}
\includegraphics[width=3.5in]{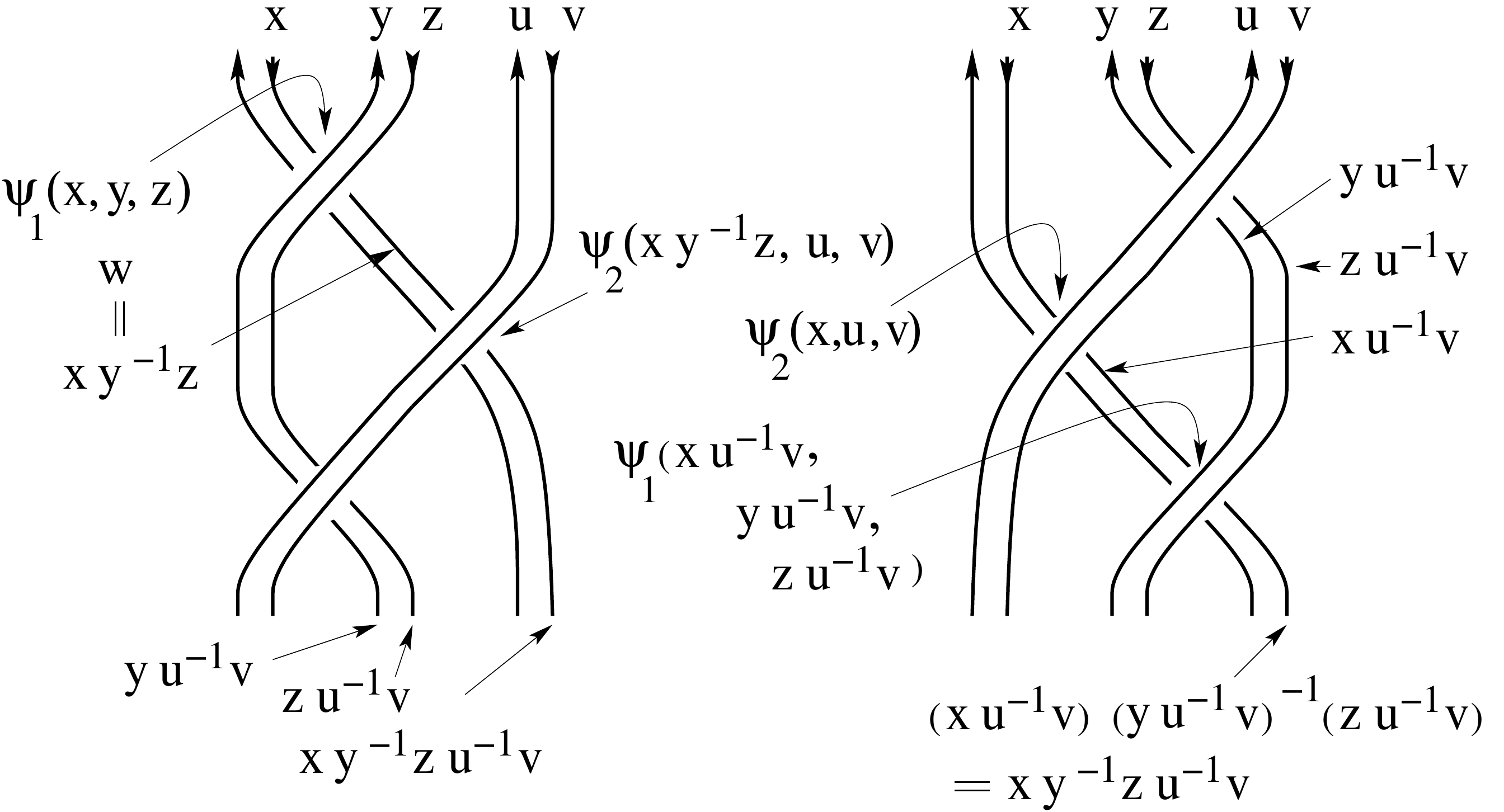}
\end{center}
\caption{Heap and Reidemeister type III move}
\label{heaptypeIIIcocy}
\end{figure}

 We now present more details regarding the constructions of invariants.
The fundamental heap of surface ribbons is defined from a given diagram $D$ of $S$ 
 as follows. We first introduce a generator for each arc appearing in $D$. 
Next, using the coloring condition as explained above,  one introduces relations.
The fundamental heap is the group defined by a presentation with these generators and relations.
 Its isomorphism class
 is invariant under 
the moves of Figure~\ref{moves} given by \cite{Matsu}, and it is therefore independent 
of 
the choice of $D$, and is an 
 isotopy invariant.  
We present several properties of the fundamental heap. 
For instance, we prove that it is not changed by addition/deletion of twisted bands, with the application of implying certain inequalities between the genus of the ribbon surface and the number of generators 
of the fundamental heap. 
Moreover, we prove that any surface ribbon can be turned, by means of stabilizations, into a new surface ribbon whose fundamental heap is free. 
 We also find a solution to the realization problem for heaps as fundamental heaps of surface ribbons. Specifically, given a finitely presented 
group 
$X$, there exists a surface ribbon whose fundamental heap 
is isomorphic to  the group heap of
$X$, up to some free factor. 
The rank of the free factor is related to the Euler characteristic and the number of connected components.

The framed Reidemeister move III with the antiparallel convention for boundary components, and heap colorings, is given in Figure~\ref{heaptypeIIIcocy}. 
The figure also indicates
 that different ribbons (belonging to different connected components) are decorated with possibly different $2$-cocycles, with the fundamental assumptions that all pairs of cocycles are mutually distributive. Then, the coloring condition at a framed Reidemeister move III is guaranteed by the self-distributivity of heap operations, while the invariance of weights is equivalent to mutual distributivity of the cocycles. Moreover, the presence of  trivalent fat vertices also requires extra conditions on the {\it labeled cohomology} of \cite{ESZcascade}. These conditions, hereby called {\it reversibility} and {\it additivity}, ensure that ribbons can be slid above and below fat vertices, which is one of the moves 
that determine the isotopy class of the embedding 
 \cite{Matsu}. 

The article is organized as follows. In Section~\ref{sec:Pre} we recall some algebraic and topological preliminaries. More specifically, we provide the definition of surface ribbons and their diagrams, and we recall the diagrammatic moves \cite{Matsu} that determine their isotopy class. Then we introduce heaps, and recall ternary self-distributive (co)homology and labeled cohomology in the specific case of mutually distributive $2$-cocycles. In Section~\ref{sec:Fundheap} we introduce the fundamental heap of surface ribbons. Then we study the properties of fundamental heaps under stabilization, twisted band addition/deletion, and boundary connected sum.
 We investigate its relation to Euler characteristic and genus of surface ribbons, and provide a positive answer to the realization problem of heaps as fundamental heaps of surface ribbons. A connection with the Wirtinger presentation is also provided, as well as  
some 
classes of examples. Section~\ref{sec:RAconditions} is devoted to the definition of a subgroup of the mutually distributive second cohomology group of heaps, determined by two additional conditions. Families of examples of cocycles satisfying the extra conditions are also provided. In Section~\ref{sec:cocyinvariant} we introduce colorings of  surface ribbons by heaps, and use this notion along with the cohomology of Section~\ref{sec:RAconditions} to construct cocycle invariants of  surface ribbons. We provide nontrivial examples and 
discuss
 a formula for the cocycle invariants of boundary connected sums.

\section{ Preliminaries }\label{sec:Pre}

In this section we review materials used in this paper.

\subsection{Diagrams of surface ribbons  and their moves}

In this section we review diagrams representing compact 
orientable 
surfaces with boundary embedded in 3-space (spatial surfaces with boundary). Our discussion is based on \cite{Matsu}. By compact surfaces with boundary, we mean surfaces that are compact and such that each component has a non-empy boundary. Compact surfaces with boundaries embedded in $3$-space are determined by their {\it spines}. Recall that a spine for a surface $S$ is a trivalent graph $G$ such that a normal neighborhood  of $G$
in $S$ 
 is homeomorphic to $S$ with a normal neighborhood of $\partial S$ removed. We therefore represent compact surfaces with boundary, diagrammatically, as  fattened 
 trivalent graphs where each edge is given by a pair of parallel arcs, while vertices are represented by triples of arcs as in Figure~\ref{buildingblocks} (C).
     We call such representations {\it surface ribbons} throughout the paper. 
     Thus a surface ribbon is a compact orientable surface with boundary in the form of a thickened  flat trivalent graph. 
  The fundamental diagrammatic units are given in Figure~\ref{buildingblocks}, where in (A) a fattened crossing is represented. 
  For simplicity we also represent surface ribbons by trivalent graphs as in (B) and (D) in the figure.

\begin{figure}[htb]
	\begin{center}
		\includegraphics[width=3in]{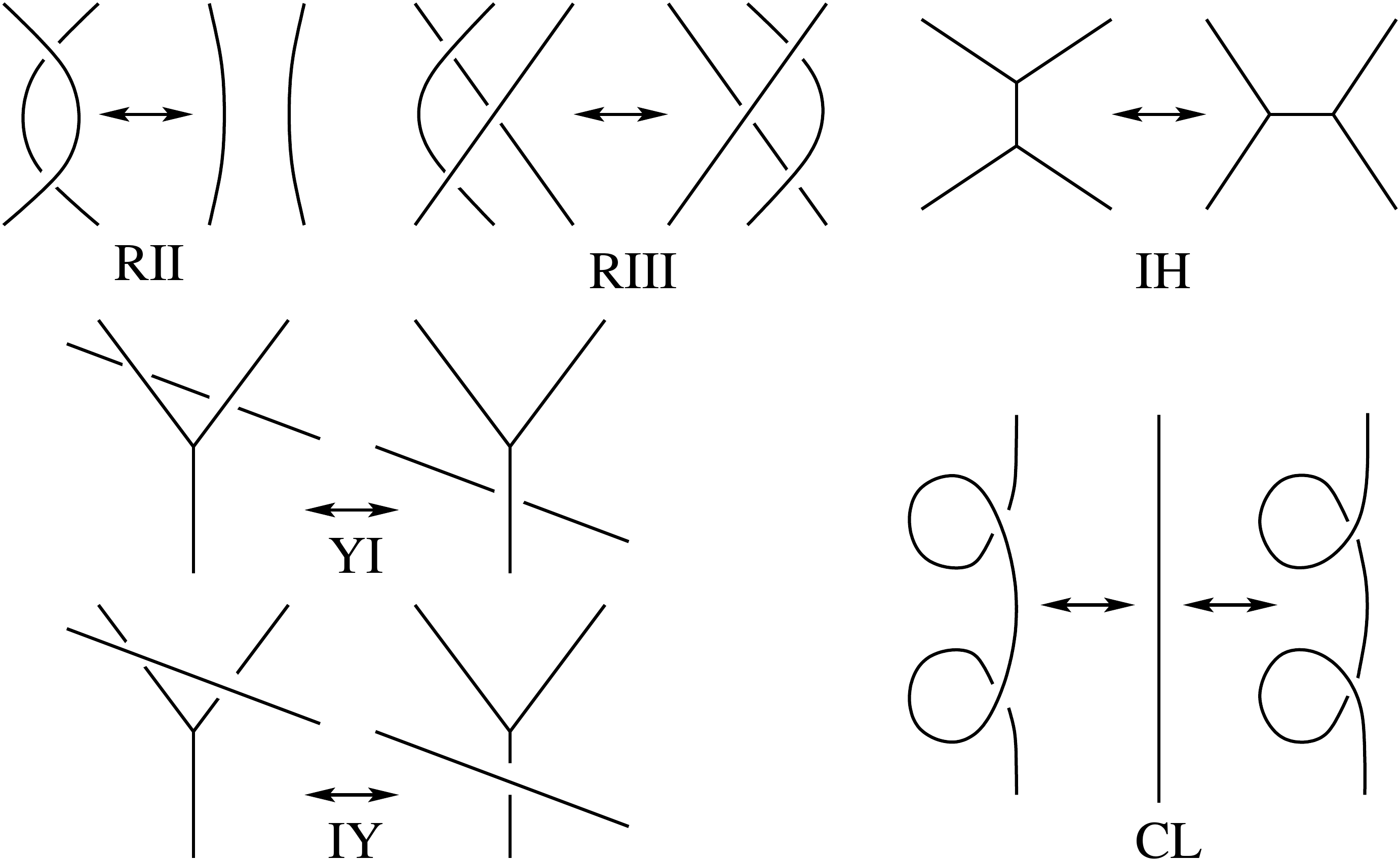}
	\end{center}
	\caption{Moves}
	\label{moves}
\end{figure}

In \cite{Matsu}, it was shown that the isotopy class of a compact orientable surface with boundary
in a surface ribbon form 
 is determined diagrammatically by the moves given in Figure~\ref{moves}. Moves RII, RIII and CL are the framed Reidemeister moves for framed links. Moves IY, YI and IH appear also in the study of handlebody knots in $3$-space, see for instance \cite{Ishii}. In particular, we mention that the IH move is important in the well posedness of the diagrammatic interpretation in terms of trivalent graphs (spines), since it allows to arbitrarily desingularize higher order vertices. 
%
Matsuzaki has determined the moves for non-oriented surfaces as well \cite{Matsu}, although we do not consider this case here. 
The main difference with the present case is that 
 a half-twist 
 is specified in trivalent graphs, and 
  further moves involving half-twists 
  are considered as well. 
  In the context of orientable surfaces no half-twist needs to be taken into account, as 
  half twists appear in even numbers for orientable surfaces, and 
  two half-twists 
  are represented by a small loop as in Figure~\ref{loop}.

\begin{figure}[htb]
\begin{center}
\includegraphics[width=1.2in]{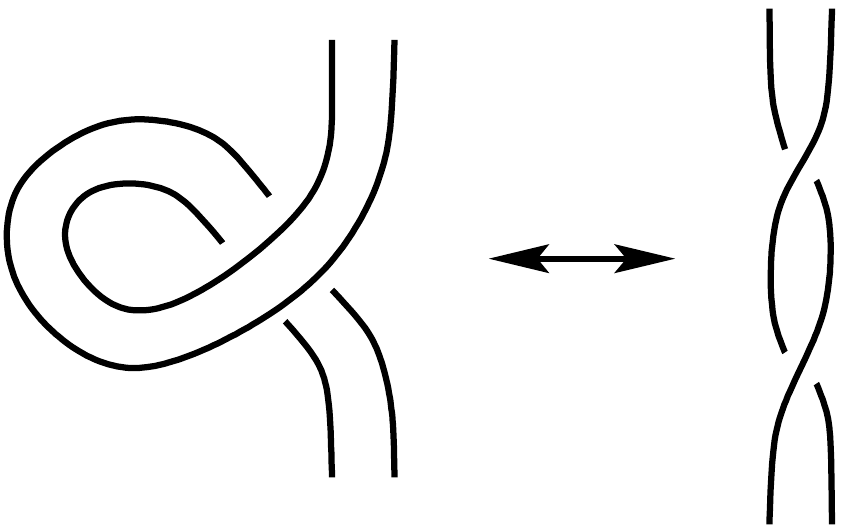}
\end{center}
\caption{A loop corresponds to a full twist}
\label{loop}
\end{figure}

\subsection{Heaps}

In this section we recall the definition and basic properties of heaps.
Given a set $X$ with a ternary operation $[ - ]$, the set of equalities 
$$ 
 [  [ x_1, x_2, x_3 ], x_4, x_5  ]=  [  x_1,[  x_4, x_3, x_2 ]  ,  x_5 ] = [  x_1,x_2, [  x_3, x_4, x_5 ]  ]   
$$ 
 is called {\it para-associativity}.
The 
equations $[x,x,y]=y$ and $[x, y, y ] = x$ are called the {\it degeneracy} conditions.
A {\it heap} is a non-empty set with a ternary operation satisfying 
		the para-associativity  
		and the degeneracy conditions~\cite{ESZheap}.

A typical example of a heap is a group $G$ where the ternary operation is given by $[x,y,z]=xy^{-1}z$,
which we call a {\it group heap}. If $G$ is abelian, we call it an {\it abelian (group) heap}. 
Conversely, 
given a heap $X$ with a fixed element $e$,  
one defines a binary operation on $X$ by $x*y=[x,e,y]$ which makes $(X,*)$ into a group with $e$ as the identity,  and the inverse of $x$ is $[e,x,e]$ for any $x \in X$.  Moreover, the associated group heap coincides with the initial heap structure. Focusing on group heaps is therefore not a strong restriction, as it is always possible to construct a group whose group heap coincides with an arbitrary heap.

Let $X$ be a set   with a ternary operation  $(x,y,z)\mapsto  T(x,y,z)$.
The condition 
$ T ( ( x, y,z ) ,  u,v)= T ( T(x,u,v), T(y,u,v) T(z,u,v ) )$ for all $x,y,z,u,v \in X$, is called {\it  ternary 
	 self-distributivity}, TSD for short. 
It is known and easily checked that the heap operation $(x,y,z)\mapsto  [x,y,z]=T(x,y,z)$ is ternary self-distributive.
In this paper we focus on the TSD structures of  group heaps.

\subsection{Ternary self-distributive homology}\label{sec:TSDcoh}

The ternary self-distributive (co)homology, which we review, was studied in \cite{EGM,Green,ESZcascade}.
Let $X$ be a ternary self-distributive set. The $n$-dimensional chain group $C_n^{\rm SD}(X)$ 
is the free abelian group generated by 
$(2n-1)$-tuples $(x_1, x_2, \ldots, x_{2n-1})$.
The boundary operator $d_n: C_n^{\rm SD} (X) \rightarrow  C_{n-1}^{\rm SD} (X)$ 
is defined by 
\begin{eqnarray*}
\lefteqn{ d_n (x_1, x_2, \ldots, x_{2n-1}) =}\\
& &  \sum_{i=1}^{n} (-1)^i [\ (x_1, \ldots, \widehat{x_{2i},x_{2i+1}},\ldots, x_{2n-1}) \\
& & \hspace{15mm}
- (x_1 x_{2i}^{-1}x_{2i+1},  \ldots, x_{2i-1} x_{2i}^{-1} x_{2i+1} , \widehat{x_{2i},x_{2i+1}}, x_{2i+2}, \ldots  , x_{2n-1}) \ ] .
\end{eqnarray*}
Cycle, boundary, homology groups are as usual denoted by $Z_n^{\rm SD} (X)$,
$B_n^{\rm SD} (X)$, and  $H_n^{\rm SD} (X)$, respectively. For an abelian group $A$, one defines the cochain, cocycle, coboundary and cohomology groups by dualizing their homological counterparts, as usual. A similar notation, with upper indices, is used to indicate these groups.  We adopt the convention that the cohomology differentials, written as $\delta^n : C^n_{\rm SD}(X,A) \longrightarrow C^{n+1}_{\rm SD}(X,A)$, are dual to the homological differentials $d_{n+1}: C_{n+1}^{\rm SD}(X) \longrightarrow C_n^{\rm SD}(X)$. 

The 2-cocycle condition in this  cohomology is formulated as 
 $$
(*) \quad 
\delta^2 \psi(x,y,z,u,v) = \psi(x,y,z)  - \psi(xu^{-1}v,yu^{-1}v,zu^{-1}v) - \psi(x,u,v)  + \psi(xy^{-1}z,u,v) = 0,
$$
where $x,y,z,u,v \in X$.

In addition, we will need the notion of 
mutually distributive cocycles \cite{ESZcascade}.
Although 
its definition was given
 in \cite{ESZcascade} for more general settings, 
 here we provide the definition for the special case 
 that we apply in this paper.  
Let $(X,T)$ denote a TSD structure and let $\psi_1$ and $\psi_2$ be TSD $2$-cocycles (as given above) with 
an abelian coefficient group $A$. Then, we say that the pair $(\psi_1,\psi_2)$ is 
{\it mutually distributive} 
 if the following two conditions hold
\begin{eqnarray*}
\psi_1(x,y,z) + \psi_2(T(x,y,z),u,v) &=& \psi_2(x,u,v) + \psi_1(T(x,u,v),T(y,u,v),T(z,u,v)) , \label{eq:mutdist1}\\
\psi_2(x,y,z) + \psi_1(T(x,y,z),u,v) &=& \psi_1(x,u,v) + \psi_2(T(x,u,v),T(y,u,v),T(z,u,v)). \label{eq:mutdist2}
\end{eqnarray*}
 In this situation we also say that $\psi_1$ and $\psi_2$ are mutually distributive.

A pair of mutually distributive 2-cocycles $(\psi_1, \psi_2)$ is called {\it coboundary} if 
there exists $f \in C^1_{\rm SD} (X,A)$ such that $\psi_i=\delta f $ for $i=1,2$.

\section{The fundamental heap of surface ribbons }\label{sec:Fundheap} 

In this section we define the fundamental heap, we show that it is an invariant of  surface ribbons, and present examples and properties. 

\subsection{Definitions and examples}

\begin{definition}\label{def:fund}
{\rm
The {\it fundamental heap} $h(S)$ of 
 a surface ribbon $S$ is defined as follows.
Let $D$ be a diagram of $S$ with double arcs of ribbons with building blocks as in Figure~\ref{buildingblocks} (A) at crossings and (C) at trivalent vertices.
We define $h(D)$ by a presentation using $D$ and show that it is well defined,  i.e. independent of choice of $D$.
	Let ${\cal  A}$ be the set of   arcs.
	Two arcs of a ribbon segment (doubled arcs) are listed as separate (distinct) elements of ${\cal  A}$. 
Each arc is assigned a generator. In 
 Figure~\ref{buildingblocks}, 
 generators are represented by letters 
(labels) $x,y,u,v,z,w$. Letters  assigned to arcs are identified with (the names of)  the arcs themselves, and 
regarded as elements of ${\cal A}$. 
Then the set of generators of $h(D)$ is ${\cal A}$.

For each crossing as depicted in  Figure~\ref{buildingblocks} (A),  
the relations are  given by $\{ z=x u^{-1} v, w=y u^{-1} v \}$. 
Specifically, when the arc $x$ goes under the arcs $(u, v)$, in this order, to the arc $z$, then 
the relation is defined as $ z=x u^{-1} v$, and similar from $y$ to $w$.
The set of union of the two relations over all crossings is denoted by ${\cal T}$ and constitutes 
the set of relations of $h(D)$. 
For each trivalent vertex as in Figure~\ref{buildingblocks} (C), each connected arc receives the same letter, and no relation is imposed. 

The fundamental heap $h(D)$ is the group heap of the group whose presentation
 is given by a set of generators corresponding to double arcs, and the set of relations assigned to all crossings:
$
\langle
\,
{\cal A}  \ | \ 
{\cal T} 
\,
\rangle 
$. 
In the next lemma, it is proved that $h(D)$ does not depend on the choice of $D$ and, therefore that it is well defined for $S$, and it is denoted by $h(S)$.
For a connected disk $B^2$, it is defined as $h(B^2)=\Z$. 
}
\end{definition}

For diagrams of spine consisting of single arcs as in Figure~\ref{buildingblocks} (B) and (D), 
the letters (generators) assigned are placed at the two sides of each arc.

\begin{figure}[htb]
\begin{center}
\includegraphics[width=1.8in]{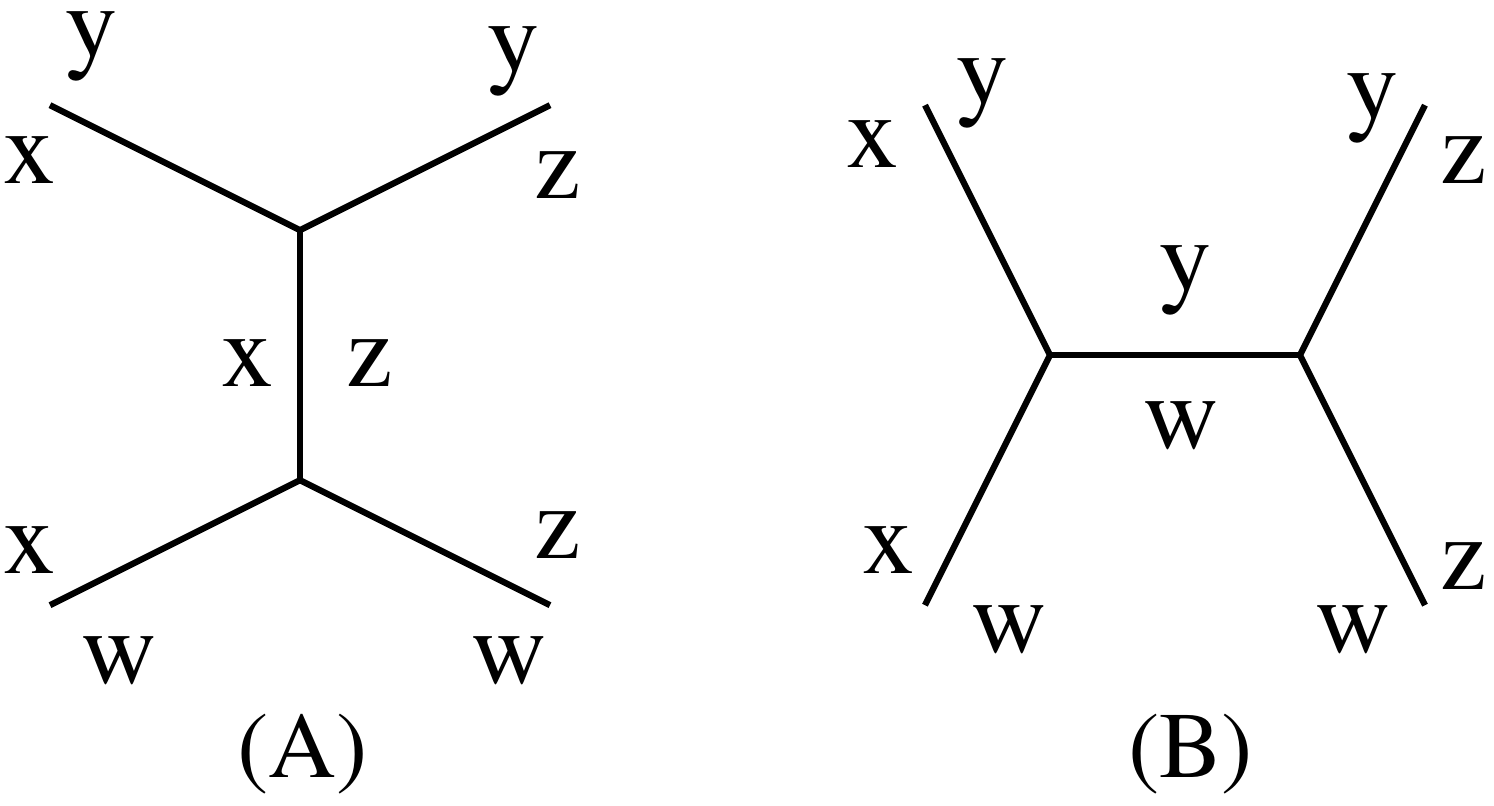}
\end{center}
\caption{Labeled IH move}
\label{assoc}
\end{figure}

\begin{figure}[htb]
\begin{center}
\includegraphics[width=4.5in]{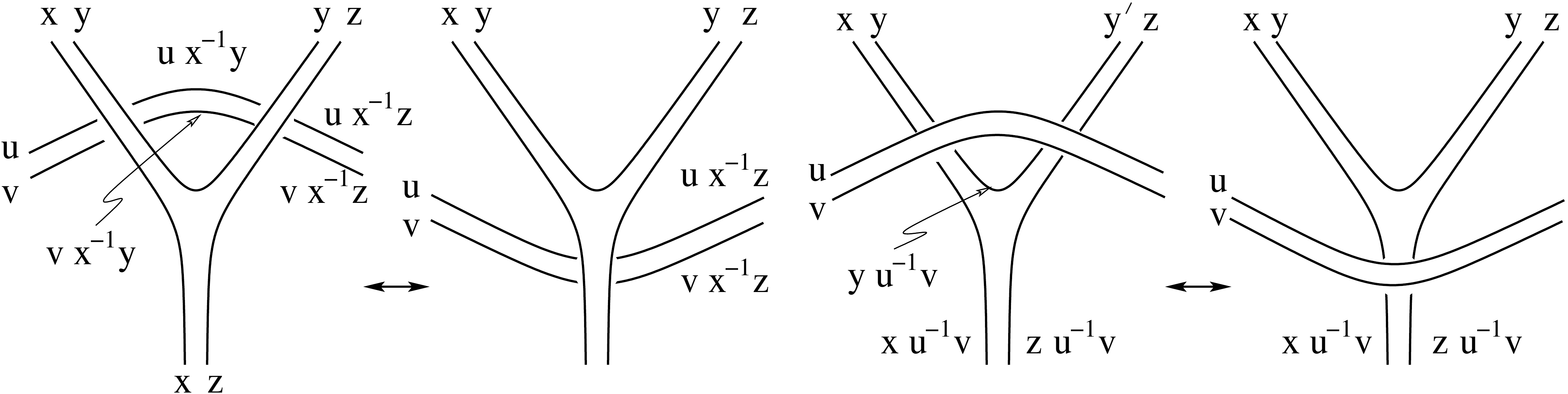}
\end{center}
\caption{YI and IY moves}
\label{BFdoublecolor}
\end{figure}

\begin{lemma} \label{lem:fund} 
The fundamental heap $h(S)$ is well defined, that is, 
the isomorphism class of the group heap $h(D)$ is independent on the choice of $D$.
\end{lemma}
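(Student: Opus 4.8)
The plan is to verify that the presentation $\langle {\cal A} \mid {\cal T}\rangle$ defining $h(D)$ produces isomorphic group heaps for any two diagrams $D$, $D'$ of the same surface ribbon $S$. By the result of Matsuzaki recalled above, $D$ and $D'$ are related by a finite sequence of the moves RII, RIII, CL, IY, YI, and IH of Figure~\ref{moves} (together with planar isotopy, which obviously does not affect the presentation). Hence it suffices to exhibit, for each individual move, an explicit isomorphism between the group heaps presented by the before- and after-diagrams. Since the generators are attached to arcs and every move is supported in a disk, outside of which the diagram and its labels are unchanged, one only has to track the arcs and relations inside the relevant disk and check that the two local presentations present the same group (with matching boundary generators), then invoke the standard fact that a Tietze-equivalence supported locally extends to the whole presentation.

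First I would dispose of the framed Reidemeister moves RII, RIII, CL. These are exactly the moves handled in \cite{SZframedlinks} for framed links, and the verification is identical here: for RII one checks that the two relations introduced by the pair of canceling crossings, namely $z = x u^{-1} v$ and then $x' = z v^{-1} u$, compose to the identity $x' = x$ using the group structure, so the new generators are eliminated; for RIII one checks that the color propagated to the bottom arc is the same on both sides, which is precisely the ternary self-distributivity identity $T(T(x,u,v),\ldots) = T(T(x,y,z),u,v)$ of the heap operation $[x,y,z]=xy^{-1}z$ — the computation is the one displayed in Figure~\ref{heaptypeIIIcocy}; and for CL (the framing/curl move) the single relation introduced by the kink equates a generator with itself via $[x,u,u]=x$, i.e.\ the degeneracy condition, so it is vacuous. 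In each case the Tietze transformations are elementary.

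The genuinely new content is the behavior under the trivalent-vertex moves IH, IY, and YI. For IH (Figure~\ref{assoc}): at a trivalent fat vertex no relation is imposed and every connected arc through the vertex carries a single label, so a diagram with an $H$-shaped configuration and one with the corresponding $I$-shaped configuration assign the same labels to the four external arcs and the same (empty) local relation set; thus the two presentations are literally equal on the nose, which is also why the IH move is what lets one desingularize higher-valence vertices consistently. For IY and YI (Figure~\ref{BFdoublecolor}): here a strand passes over (or under) a trivalent vertex, so one must compare the colors assigned to the three doubled arcs emanating from the vertex after the moving strand with colors $(u,v)$ has swept across. The key point is that the vertex has no relation of its own, so the only constraint is that the three outgoing doubled arcs on the far side receive the images under $z\mapsto [z,u,v] = z u^{-1}v$ of the three incoming colors; since this map is applied uniformly to all three ribbon arcs through the vertex, the configuration of colors around the vertex is preserved, and the external boundary labels again match. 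I expect this uniform-translation check — confirming that sliding a ribbon past a fat vertex multiplies every local color by the same $u^{-1}v$ and hence respects the (relation-free) vertex — to be the main obstacle, mostly bookkeeping: one must be careful about orientations of the doubled arcs (recall boundary components are oriented antiparallel), about which of the two doubled strands of the moving ribbon is $u$ and which is $v$, and about the case analysis over/under and the several combinatorial types of the IY/YI moves. Once all six moves are checked, well-definedness of $h(S)$, and hence of its group heap up to isomorphism, follows. $\qed$
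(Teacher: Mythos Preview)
Your overall strategy --- check invariance under each of the moves RII, RIII, CL, IH, IY, YI from \cite{Matsu} via local Tietze transformations --- is exactly the paper's, and your treatment of RII, RIII, and IH agrees with it.

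The gap is in your handling of the IY and YI moves. These moves do \emph{not} slide the moving ribbon past the vertex so that all three legs cross it; rather, on one side of the move the moving ribbon crosses \emph{one} leg of the vertex and on the other side it crosses the remaining \emph{two} adjacent legs. Consequently the check is not a ``uniform translation by $u^{-1}v$ of all three arcs.'' For YI (the moving ribbon passes \emph{under} the vertex), an under-arc labeled $u$ goes under the $(x,y)$-leg and then the $(y,z)$-leg on one side, versus under the single $(x,z)$-leg on the other, so the identity actually needed is the \emph{additivity}
\[
T\bigl(T(u,x,y),y,z\bigr)=T(u,x,z).
\]
For IY (the moving ribbon passes \emph{over} the vertex), the inner vertex arc $y$ goes under the moving ribbon twice with opposite orientations on one side and not at all on the other, so the identity needed is the \emph{reversibility}
\[
T\bigl(T(y,u,v),v,u\bigr)=y.
\]
Both hold for group heaps, so the lemma is true, but neither is captured by your ``apply $z\mapsto zu^{-1}v$ uniformly to all three ribbon arcs'' picture; that picture would only be relevant to a composite of two such moves, not to a single one. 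A smaller point: the CL move is not a single kink and does not reduce to the degeneracy $[x,u,u]=x$; it involves two crossings and a short computation (the paper simply defers to \cite{SZframedlinks} here).
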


\begin{proof}
Applying the moves for the spines of surface ribbons in \cite{Matsu}, we check the invariance under the moves listed in Figure~\ref{moves}.
Of these, Reidemeister moves RII and RIII, as well as the CL move, are checked in \cite{SZframedlinks}. A diagram for checking RIII is depicted in Figure~\ref{heaptypeIIIcocy}. 
In this figure, for example, using relations at crossings,
 the generator assigned on  the bottom right string,
is expressed in terms of $x,y,z,u,v$  as $xy^{-1}zu^{-1}v$
and $( x u^{-1} v) (y u^{-1} v)^{-1} (z u^{-1} v)$,  in the left and right figures, respectively, 
and they coincide.

The IH move does not involve any change of relations, and the presentation does not change.
The way the connected components of arcs receive consistently the same letters is depicted in Figure~\ref{assoc}. 
The remaining moves, YI and IY, are also checked diagrammatically as depicted in Figure~\ref{BFdoublecolor}, as desired. 
 We note that if we write $T(x,y,z)=x y^{-1} z$ as a ternary operation, 
then the YI move requires $T( T(u, x, y), y, z)=T(u, x,z)$ (left of Figure~\ref{BFdoublecolor}), 
and the IY move requires $T(y, u, v), v, u)=y'=y$ (right of Figure~\ref{BFdoublecolor}),
both of which hold for the group heap operation.  
These properties of heaps are also used in the proof of Lemma~\ref{lem:color}, accordingly.
\end{proof}

	Recall that a group $G$ is said to be finitely presented if there exists a presentation of $G$ with a finite number of generators and a finite number of relators. 
		From the definition, we have that the fundamental heap of a surface ribbon is the group heap of a
	finitely presented  group.
	From  the definition  
we have the following as well.

\begin{lemma} 
\label{lem:disjoint}
	Let $S_1, S_2, \ldots , S_\nu$ be  surface ribbons with fundamental heaps $h(S_i)$, $i= 1,\ldots , n$. Then the surface ribbon $S = \sqcup_i S_i$, a split sum (disjoint union), has fundamental heap $h(S) \cong h(S_1)*h(S_2)*\cdots * h(S_\nu)$.
\end{lemma}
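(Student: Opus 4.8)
The plan is to compute $h(S)$ from a diagram of $S$ that is visibly a disjoint union of diagrams of the $S_i$, and then recognize the resulting presentation as that of a free product. Since the surface ribbons $S_1,\dots,S_\nu$ are pairwise disjoint in $3$-space, I would choose diagrams $D_i$ of the $S_i$ and, translating them in the plane, arrange that their planar supports are pairwise disjoint; the union $D:=D_1\sqcup\cdots\sqcup D_\nu$ is then a legitimate diagram of $S=\sqcup_i S_i$. By Lemma~\ref{lem:fund} we have $h(S)\cong h(D)$ and $h(S_i)\cong h(D_i)$, so it suffices to analyze these presentations.

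Because the planar supports are disjoint, every crossing and every trivalent vertex of $D$ belongs to exactly one of the $D_i$. Hence, in the notation of Definition~\ref{def:fund}, the set of arcs splits as $\mathcal{A}(D)=\bigsqcup_i\mathcal{A}(D_i)$ and the set of relations splits as $\mathcal{T}(D)=\bigsqcup_i\mathcal{T}(D_i)$, with every relation of $\mathcal{T}(D_i)$ involving only generators from $\mathcal{A}(D_i)$. Writing $G$ for the group presented by $\langle\,\mathcal{A}(D)\mid\mathcal{T}(D)\,\rangle$ and $G_i$ for the group presented by $\langle\,\mathcal{A}(D_i)\mid\mathcal{T}(D_i)\,\rangle$, the standard fact that a presentation whose generators and relators partition into non-interacting blocks presents the free product of the blockwise groups gives $G\cong G_1*\cdots*G_\nu$. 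Since $h(D)$ and $h(D_i)$ are by definition the group heaps of $G$ and $G_i$, this already settles the claim at the level of underlying groups.

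Finally I would transfer this to heaps. As recalled in Section~\ref{sec:Pre}, choosing any element of a heap as identity recovers a group whose group heap is the original heap, and the underlying group is thereby determined up to isomorphism; thus the free product $h(S_1)*\cdots*h(S_\nu)$ of the group heaps $h(S_i)$ is defined as the group heap of $G_1*\cdots*G_\nu$, independently of the chosen identifications. Applying this to $G\cong G_1*\cdots*G_\nu$ yields $h(S)\cong h(S_1)*\cdots*h(S_\nu)$. The only components not covered by this argument are disk components: if $S_i$ is a disk then $h(S_i)=\Z$ by convention, and such a component contributes exactly a free $\Z$ factor, consistent with the formula. I do not expect a genuine obstacle here; the one point deserving care is making precise the free product of heaps via the heap--group dictionary (and its independence of the chosen identifications), together with the routine verification that a disjoint union of diagrams is again a diagram and the bookkeeping of the disk convention.
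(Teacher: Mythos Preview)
Your argument is correct and is exactly the approach the paper takes: the paper simply states that the lemma follows ``from the definition'' without further elaboration, and what you have written is a careful unpacking of that remark---choosing a split diagram so that the generators and relations partition into non-interacting blocks, and recognizing the resulting presentation as a free product. Your extra care with the heap/group dictionary and the disk convention is more explicit than anything in the paper, but there is no substantive difference in strategy.
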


The following is a generalization of the corresponding result in \cite{SZframedlinks}, which  was proved for framed links.

\begin{theorem}\label{thm:heap}
Let $S=S_1 \cup \cdots \cup S_\nu$ be a surface ribbon written as the union of connected components.
Then $h(S)\cong F_\nu * \hat{h}(S)$ for some group $\hat{h}(S)$, where $F_\nu$ denotes the free group of rank $\nu$. 
\end{theorem}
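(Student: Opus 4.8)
The plan is to mimic the proof of the analogous statement for framed links in \cite{SZframedlinks}, exploiting the fact that the fundamental heap is built from a group presentation whose relators all have a special abelianization. First I would fix a diagram $D$ of $S$ and consider the group $G$ with presentation $\langle {\cal A} \mid {\cal T}\rangle$, so that $h(S)$ is the group heap of $G$. The key observation is that every relator in ${\cal T}$ has the form $z x_{2i}^{-1} x_{2i+1} x^{-1}$ (i.e.\ $z = x u^{-1}v$), which maps to $0$ in $G^{\mathrm{ab}}$; more precisely, each relator is a product of generators whose exponent sum is zero. Hence there is a well-defined surjection $G \to \Z$ sending every generator to $1$, and in fact one such surjection for each connected component: for a component $S_j$, define $\epsilon_j : G \to \Z$ by sending every generator coming from an arc of $S_j$ to $1$ and every other generator to $0$; this is well defined because each relator involves only arcs of a single component and has zero exponent sum within that component. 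Assembling these gives a surjection $\epsilon = (\epsilon_1,\dots,\epsilon_\nu): G \twoheadrightarrow \Z^\nu$.

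Next I would upgrade this to a \emph{retraction onto a free factor}. Choose, for each component $S_j$, a single generator $a_j \in {\cal A}$ coming from an arc of $S_j$ (for the disk component, take the unique generator, consistent with $h(B^2)=\Z$). These $a_1,\dots,a_\nu$ generate a subgroup; I claim they generate a free subgroup $F_\nu$ of rank $\nu$ that splits off. To see this, define a homomorphism $\rho: G \to F(a_1,\dots,a_\nu)$ from $G$ onto the free group on the symbols $a_1,\dots,a_\nu$ by sending each generator $x$ belonging to component $S_j$ to $a_j$. This is a well-defined homomorphism: a relator $z = x u^{-1} v$ with all of $z,x,u,v$ in component $S_j$ maps to $a_j = a_j a_j^{-1} a_j$, which holds in the free group. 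Composing $\rho$ with the inclusion $F(a_1,\dots,a_\nu) \hookrightarrow G$ (as the subgroup generated by the $a_j$) gives an endomorphism of $G$ that is the identity on the $a_j$'s; since the $a_j$ lie in distinct components and $\rho$ already detects $\nu$ independent $\Z$ quotients, the subgroup $\langle a_1,\dots,a_\nu\rangle$ is free of rank $\nu$ and $\rho$ is a retraction. Standard group theory then gives a semidirect / free-product decomposition: setting $\hat G := \ker \rho$, we have $G \cong F_\nu \ltimes \hat G$.

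To get the sharper statement $G \cong F_\nu * \hat G$ (a genuine free product, as stated) rather than merely a split extension, I would argue as in \cite{SZframedlinks}: because the map $\rho$ and the inclusion of the free factor are ``parallel'' across components — each $a_j$ interacts only with its own component — one can present $G$ by first Tietze-transforming so that within each component one distinguished generator $a_j$ is isolated, and then observe that the remaining relations, after substituting, express all other generators of $S_j$ as elements of the normal closure in a way that makes $a_j$ a free letter not entangled with the rest. Concretely, using the relations $z = x a_j^{-1} a_j \cdots$ one rewrites each generator $x$ in component $S_j$ as $a_j \cdot (\text{word in the ``difference'' generators } x a_j^{-1})$; introducing new generators $\tilde x := x a_j^{-1}$ exhibits a presentation of $G$ as $\langle a_1,\dots,a_\nu \rangle * \langle \{\tilde x\} \mid \tilde{\cal T}\rangle$ with no surviving relation linking the $a_j$ to the $\tilde x$. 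The second factor is then $\hat h(S) := \langle \{\tilde x\}\mid \tilde{\cal T}\rangle$, and passing to group heaps (which is functorial and sends free products of groups to free products of the associated heaps, as recorded after the definition and in Lemma~\ref{lem:disjoint}) yields $h(S) \cong F_\nu * \hat h(S)$.

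The main obstacle I anticipate is the last step: verifying that the decomposition is a \emph{free product} and not just a semidirect product. The surjection onto $\Z^\nu$ and the retraction onto $F_\nu$ are routine, but showing that after eliminating the ``connecting'' occurrences of the distinguished generators $a_j$ via Tietze moves, the presentation literally splits as a free product — i.e.\ that no relation of ${\cal T}$ secretly couples two distinct free letters or couples a free letter $a_j$ to the kernel part in an essential way — requires careful bookkeeping of which arcs meet which crossings, using that each crossing (and its two relations) involves arcs of a single connected component. This is exactly the point where the structure of surface-ribbon diagrams, as opposed to arbitrary group presentations, is used, and it is where I would follow the framed-link argument of \cite{SZframedlinks} most closely, adapting it to accommodate the trivalent fat vertices (which, by Figure~\ref{assoc}, impose no new relations and merely identify generators along a connected arc, hence cause no difficulty).
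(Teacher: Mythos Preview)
Your retraction $\rho: G \to F(a_1, \ldots, a_\nu)$ is well-defined, but not for the reason you state: it is \emph{not} true that each crossing involves arcs of a single connected component. At a crossing as in Figure~\ref{buildingblocks}~(A), the underpassing pair $(x, z)$ lies on one component while the overpassing pair $(u, v)$ may lie on another. What saves $\rho$ is only that $u$ and $v$, being the two boundary arcs of the \emph{same} overpassing ribbon segment, always share a component, so $\rho(u^{-1}v) = 1$ and the relator dies regardless.

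This same false premise breaks your free-product step. With $x, z$ in $S_j$ and $u, v$ in $S_k$, $j \neq k$, your substitution $\tilde x := x a_j^{-1}$ turns $z = x u^{-1} v$ into
\[
\tilde z \; = \; \tilde x \cdot \bigl( a_j a_k^{-1}\, \tilde u^{-1} \tilde v \, a_k a_j^{-1}\bigr),
\]
which still carries $a_j$ and $a_k$ through a conjugation. So the Tietze-transformed presentation does \emph{not} split as $\langle a_1, \ldots, a_\nu\rangle * \langle \{\tilde x\} \mid \tilde{\cal T}\rangle$; you only recover the semidirect product $F_\nu \ltimes \ker\rho$, and a retraction onto a free subgroup does not in general force a free-product decomposition (e.g.\ $\Z^2 \twoheadrightarrow \Z$). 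Your closing sentence, that each crossing ``involves arcs of a single connected component,'' is exactly the assumption you would need and exactly what fails.

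The paper's argument repairs this by a different change of generators: it uses \emph{ribbon terms} $\alpha = x^{-1}y$, where $x, y$ are the two boundary arcs of the \emph{same ribbon segment}, rather than your $x a_j^{-1}$. After putting a connected $S$ in standard form and tracing a boundary curve from a fixed base arc, every arc generator is expressed as (base)$\cdot$(ribbon word), and the crossing relation $z = x u^{-1}v$ simply appends the ribbon term $u^{-1}v$ --- which is already one of the new generators, regardless of which component the overpass belongs to. The relations read off at the base of the standard form then become equalities purely among ribbon words, with the base generator never appearing, yielding the genuine free product $\langle x\rangle * \langle \mathcal B \mid \mathcal R\rangle$. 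The ribbon-term substitution is what your $\tilde x = x a_j^{-1}$ was aiming for, but calibrated to ribbon segments rather than to whole components; that recalibration is the missing idea.
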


\begin{proof}
Let $x, y$ be a pair of generators assigned to a single ribbon $R$.
We call the elements  $x^{-1} y$ and $y^{-1}x$ {\it ribbon terms}, and a word in ribbon terms  a {\it ribbon word}.

\begin{figure}[htb]
\begin{center}
\includegraphics[width=2.2in]{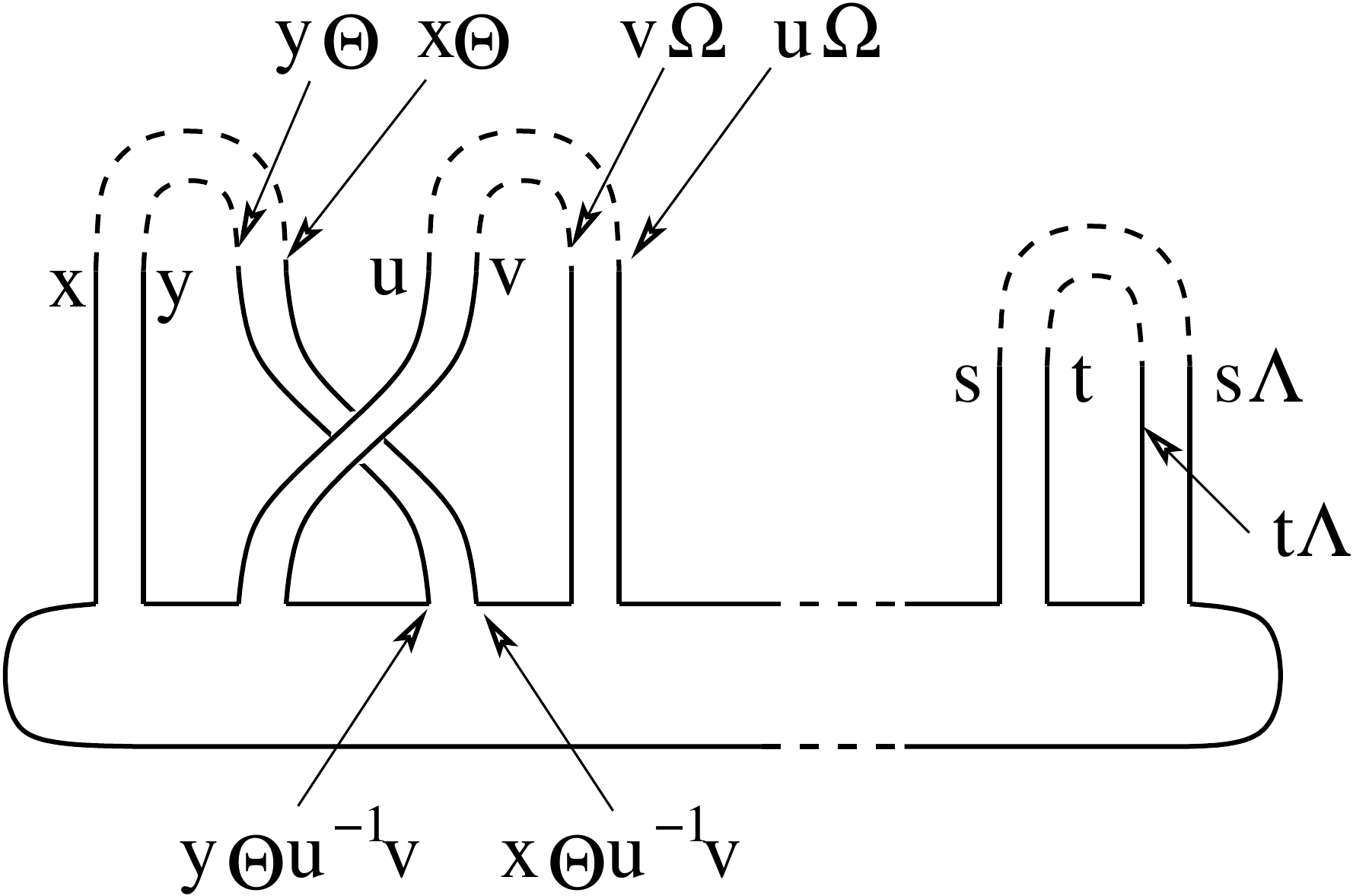}
\end{center}
\caption{Base of surface ribbons}
\label{base}
\end{figure}

First assume that $S$ is connected.
Any  connected surface ribbon can be put in a ``standard form'' as depicted in Figure~\ref{base}, 
where dotted arcs are ribbons connecting them,
and they may be knotted and linked  with each other.
In Figure~\ref{base}, the left portion indicates a pair of crossed handles (ribbons) that contribute to the genus by one, and the right portion indicates a single trivial handle that contributes to the number of connected components by one. Although in general there are multiple numbers of these, we make computations for the surface having one for  each type, as the general case is similar.
Assigned letters are indicated in the figure, where greek letters indicate ribbon words.

The presentation of $h(S)$ with arcs as generators and relations at crossings as defined 
in Definition~\ref{def:fund} is modified as follows, that represent an isomorphic group.
For this proof, we choose a set of generators to be the (letters assigned to) arcs ${\mathcal A}_0$ of ribbons connected 
to the base separately, in addition to arcs away from the base, and a set of relators ${\mathcal T}_0$ to be equalities when two feet of ribbons are adjacent 
at the base, in addition to relations derived from crossings as in Definition~\ref{def:fund}. For example, the arc with letter $y$ at the left most ribbon foot in Figure~\ref{base} is adjacent to the arc with letter $u$ at the second foot, so that a relation $y=u$ is part of ${\mathcal T}_0$.
From Figure~\ref{base}, one obtains a set of relations:
(1) $y=u$, (2) $v=y\Theta u^{-1} v$, (3) $x \Theta u^{-1}v= v \Omega$, (4) $u\Omega = s$, (5) $t=t \Lambda$, and 
(6) $s \Lambda=x$ from connected segments. Thus these relations are contained in ${\mathcal T}_0$.
This is a starting presentation $\langle\, {\mathcal A}_0 \mid {\mathcal T}_0  \, \rangle$.

If a pair of letters $x,y$ are assigned to the boundary arcs of a ribbon, then we add a
new generator $\alpha$  that is a ribbon term, and add a relation $\alpha = x^{-1} y $ that is a defining relation of the ribbon term.
Perform this process to obtain a new presentation 
$\langle \, {\mathcal A}_1 \mid {\mathcal T}_1 \, \rangle$.

When one traces the boundary curve labeled $x$ along the dotted line and encounter 
the first crossing as in Figure~\ref{buildingblocks} (A), together with the generator $z$ assigned on the arc on the other side of a pair of arcs labeled $u$ and $v$, we have a relation $z=x u^{-1}v=x \beta$, where $\beta=u^{-1}v$ is a ribbon term.
By this relation, $z$ is eliminated from the set of generators and replaced by $x \beta$, 
and in all relations having  $z$ in them are replaced by those with $z$ substituted by $x\beta$.
By this process, the generator $z$ is replaces by $x \beta$.
Similarly, $w$ is replaced by $y \beta$. 

Continuing this process, 
when the arc labeled $x$ reaches to the base of the surface as in Figure~\ref{base}
at the arc third from the left, it is labeled by $x \Theta u^{-1} v$, where $\Theta$ is a ribbon word and $u^{-1}v$ is a ribbon term $\beta$. The other arcs are similarly labeled in the figure.
The other relations at the base are changed as follows:
 (1)  $u = x\alpha$, 
 (2) $v=  y\Theta u^{-1} v= x \alpha \Theta \beta$, 
(3)  $(v=)  \Theta \beta \Omega^{-1} = \alpha \Theta \beta$,
 (4) replaces $s= x\alpha \Omega$, 
 (5) $\Lambda=1$ for ribbon word $\Lambda$, and  $t$ is replaced using a ribbon term $\gamma=s^{-1} t$ to be $t=s \gamma = x \alpha \Omega \gamma$, and 
 (6) $s=x \Lambda^{-1}$.
Thus we obtain a group presentation with all the generators original assigned to arcs are expressed by $x \Phi$ for some ribbon word $\Phi$.

In summary the original presentation of $h(S)$, 
$
\langle
\,
{\cal A}  \ | \ 
{\cal T} 
\,
\rangle 
$ as in Definition~\ref{def:fund} is replaced by 
a new resulting presentation 
$ \langle \,  x, {\mathcal B} \mid {\mathcal R} \,  \rangle$, where $x$ is a free generator, 
 $ {\mathcal B} $ is the set of generating ribbon terms corresponding to  ribbon segments,
and $ {\mathcal R} $ is a set of relations among ribbon words.
Hence $h(S)$ is written as $ \langle \,  x \ \rangle * \langle \ {\mathcal B} \mid {\mathcal R} \ \rangle = \Z * \hat{h}(S)$ where $ \hat{h}(S) = \langle \ {\mathcal B} \mid {\mathcal R} \, \rangle$.
The argument is repeated to higher genus and with more than one trivial bands.
The argument is also repeated, with a single free generator for each connected component $S_i$, $i=1, \ldots, \nu$, with one free generator for each component.
\end{proof}

\begin{definition}
{\rm 
By uniqueness of free product of groups~\cite{Scott-Wall}, the group $\hat{h}(S)$ in Theorem~\ref{thm:heap} is well defined up to isomorphism.
We call the group heap of $\hat{h}(S)$  the  {\it reduced fundamental heap} of $S$.
}
\end{definition}

\begin{definition}\label{def:rank}
{\rm 
For a surface ribbon $S$, the maximum rank $s$ of the free group factor $h(S)\cong F_s * G$ 
of the fundamental heap is called the {\it rank} of $h(S)$, or simply,  of $S$, and denoted by ${\rm rank} (S)$. 
}
\end{definition}

\begin{remark}
	{\rm 
		We observe that the rank of a surface ribbon is well defined, as a consequence of the uniqueness of free product factorization of groups \cite{Scott-Wall}.
	}
\end{remark}

\begin{definition}
	{\rm 
	Denote by $\mu(G)$ the minimum number of generators of a finitely generated group $G$. For a surface ribbon $S$
	let $\mu(S)$ denote $\mu(h(S))$. 
}
\end{definition}

From Theorem~\ref{thm:heap}, we have 
${\rm rank}(S) \geq  \nu$, where 
$\nu$ denotes the number of connected components of $S$. 
 In general the inequality is strict, as we see below.

\begin{figure}[htb]
\begin{center}
\includegraphics[width=2.2in]{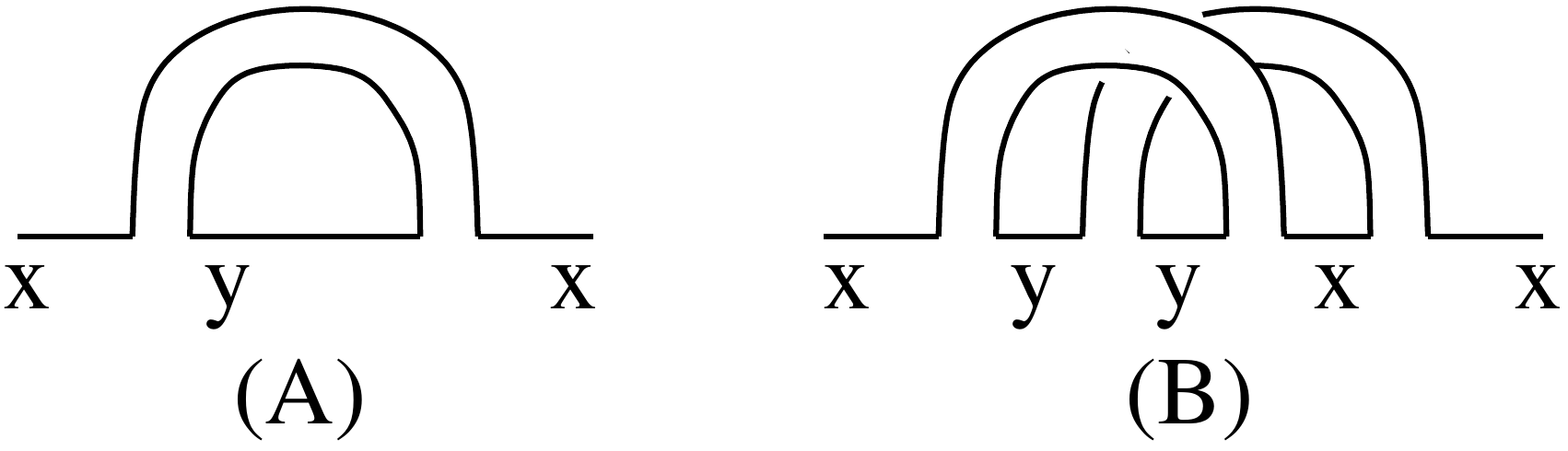}
\end{center}
\caption{Adding trivial  bands}
\label{trivribbon}
\end{figure}

\begin{example}\label{ex:loopband}
{\rm
A trivial single band $B_1$ and a trivial crossed band pair $B_2$ are depicted in (A) and (B) in Figure~\ref{trivribbon},
respectively.
If the two end points are closed by trivial arcs, then both result in the ribbon surface with 
the fundamental heap isomorphic to $F_2=\langle \, x,y \, \rangle$, the free group of rank two, as seen from the figure.
Let $S=(B_1)^m (B_2)^n$ denote the closure of concatenation of
$m$ trivial bands and $n$ pairs of  crossed band pairs. Then we have 
$h(S)\cong F_{m+n+1}$, the free group of rank $m+n+1$.

}
\end{example}

\begin{figure}[htb]
\begin{center}
\includegraphics[width=2.2in]{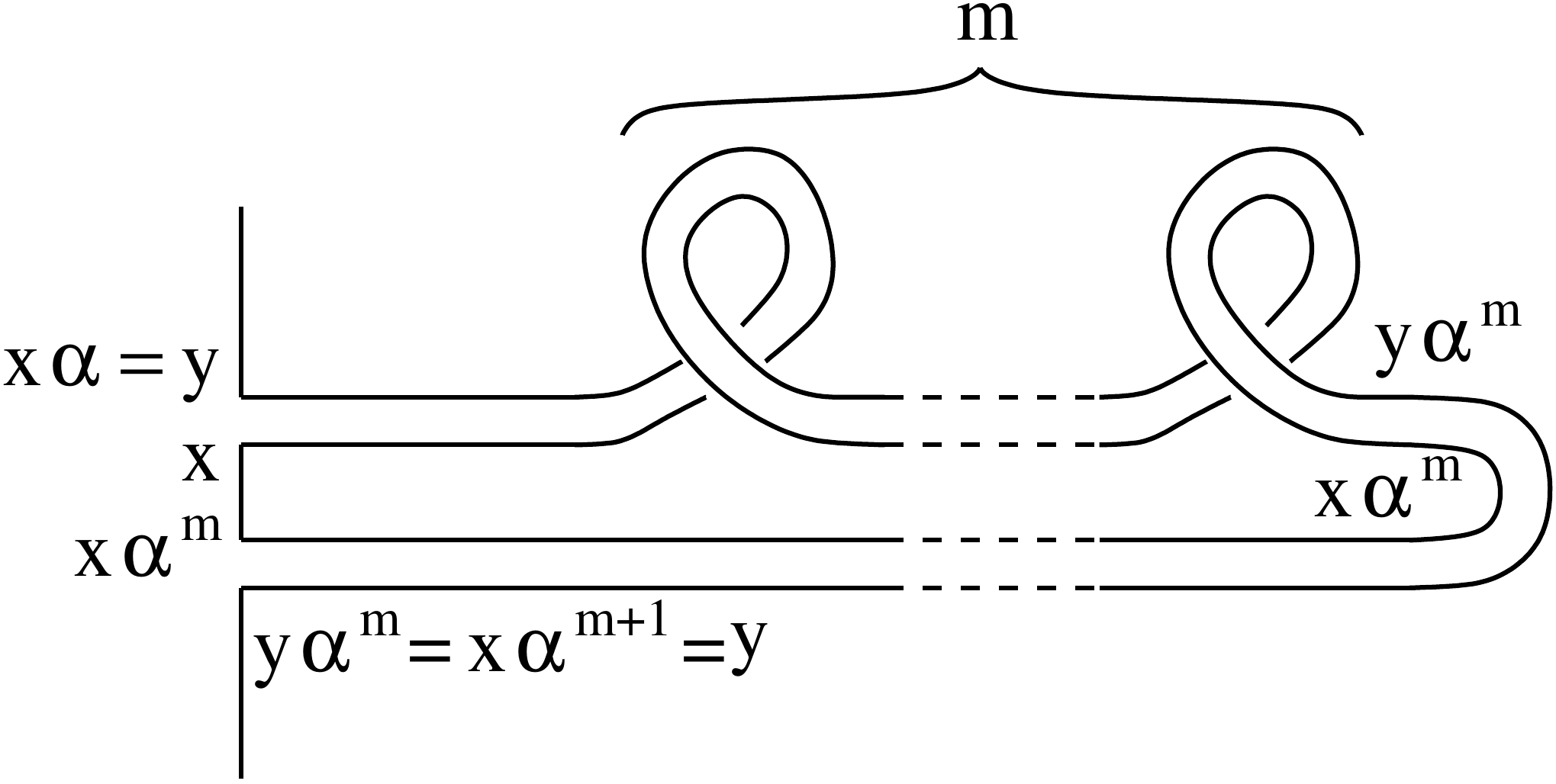}
\end{center}
\caption{Ribbon with loops}
\label{loops1}
\end{figure}

\begin{example}\label{ex:loops}
{\rm
A band with $m$ loops is depicted in Figure~\ref{loops1}.
At the end points, $x$ and $y$ are assigned as generators.
We set the ribbon element to be $\alpha=x^{-1}y$. 
Inductively at the right end of the arcs receive the labels $x \alpha^m$ 
and $y \alpha^m$ as depicted. This computation was done in \cite{SZframedlinks}. 
As depicted, we obtain a relation $\alpha^m=1$, and the bottom end receives the label $y$,
which coincides with the top label. 
If we close the end points, we obtain an annulus with the fundamental heap isomorphic to
$\langle \, x, \alpha \mid \alpha^m \, \rangle \cong F_1 * \Z_m$. 
If we concatenate copies of the band with $m_i$ loops vertically, $i=1, \ldots, n$, and close the end points, then  we obtain an $n$ punctured disk, denoted 
$ D(m_1, \ldots , m_n)$, such that 
$h(D( m_1, \ldots , m_n))\cong F_1 * \Z_{m_1} * \cdots * \Z_{m_n}$.
Furthermore, if we concatenate $k-1$ copies of the  trivial band in Figure~\ref{trivribbon} (A),
then we obtain an $n+k$ punctured disk $D( m_1, \ldots , m_n;k)$ such that 
$h(D( m_1, \ldots , m_n;k) ) \cong F_{k+1}  *  \Z_{m_1} * \cdots * \Z_{m_n}$.
Variations of this construction are found below in Example~\ref{ex:loopbandgenus}. 
}
\end{example}

Let ${\rm Ab}[G]$ denote the abelianization of a group $G$.
By  applying Lemma~\ref{lem:disjoint} to the abelianization of the fundamental heaps in Example~\ref{ex:loopband}  and Example~\ref{ex:loops}, 
we obtain the following.

\begin{proposition}\label{prop:ab}
For any finitely generated abelian group $A$, there exists a connected surface ribbon $S$ such that 
${\rm Ab}[h(S)]\cong A$. 
\end{proposition}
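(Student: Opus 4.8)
The plan is to reduce the statement to the explicit families of connected surface ribbons already constructed in Examples~\ref{ex:loopband} and \ref{ex:loops}, combined with the structure theorem for finitely generated abelian groups and the fact that abelianization carries free products to direct sums.

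First I would invoke the structure theorem to write $A \cong \Z^{k} \oplus \Z_{m_1} \oplus \cdots \oplus \Z_{m_n}$ for some $k \geq 0$ and integers $m_1,\ldots,m_n \geq 2$ (allowing $n=0$ when $A$ is free); since only the isomorphism type of the abelianization matters, any decomposition of the torsion part into cyclic groups is acceptable, so the $m_i$ need not be prime powers. Next I would record the elementary identity ${\rm Ab}[G_1 * \cdots * G_r] \cong {\rm Ab}[G_1] \oplus \cdots \oplus {\rm Ab}[G_r]$; together with Lemma~\ref{lem:disjoint} this is precisely the assertion that the abelianized fundamental heap of a disjoint union is the direct sum of the abelianized fundamental heaps, and in particular it lets us abelianize the free-product formulas for $h$ appearing in Examples~\ref{ex:loopband} and \ref{ex:loops}.

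The construction itself is then immediate: take $S$ to be the connected $(n+k-1)$-punctured disk $D(m_1,\ldots,m_n;k-1)$ of Example~\ref{ex:loops} (with the convention that $D(m_1,\ldots,m_n;0)$ is the plain $n$-punctured disk $D(m_1,\ldots,m_n)$, and, when $n=0$, using instead the closure of $k-1$ trivial bands from Example~\ref{ex:loopband}). That example gives $h(S) \cong F_{k} * \Z_{m_1} * \cdots * \Z_{m_n}$, so by the preceding identity ${\rm Ab}[h(S)] \cong \Z^{k} \oplus \Z_{m_1} \oplus \cdots \oplus \Z_{m_n} \cong A$, as required.

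The one point that needs care — and the main thing to keep track of — is the free rank: by Theorem~\ref{thm:heap} a connected surface ribbon always has ${\rm rank}(S) \geq 1$, so ${\rm Ab}[h(S)]$ necessarily carries one distinguished $\Z$ summand (the one arising from the base of the ribbon in Figure~\ref{base}); the remaining $k-1$ copies of $\Z$ must then be produced by inserting $k-1$ trivial bands, while each factor $\Z_{m_i}$ is produced by a band with $m_i$ loops as in Figure~\ref{loops1}. Thus the matching of summands of $A$ with geometric features of $S$ applies to every $A$ admitting at least one infinite cyclic summand, which is exactly the generality in which Examples~\ref{ex:loopband} and \ref{ex:loops} were computed. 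Apart from this bookkeeping the argument is purely formal, so I do not anticipate any further obstacle beyond citing those two examples correctly.
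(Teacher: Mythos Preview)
Your approach is essentially identical to the paper's: both simply invoke the structure theorem and read off the required connected surface from the families $D(m_1,\ldots,m_n;k)$ of Examples~\ref{ex:loopband} and~\ref{ex:loops}, using that abelianization sends free products to direct sums. Your final observation is in fact sharper than the paper's one-line justification: by Theorem~\ref{thm:heap} a connected $S$ always has a $\Z$ summand in ${\rm Ab}[h(S)]$, so the statement only holds for $A$ of free rank $\geq 1$ --- a restriction the paper silently builds into the restated Proposition~\ref{prop:ab2} but omits here.
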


\begin{figure}[htb]
	\begin{center}
		\includegraphics[width=2.5in]{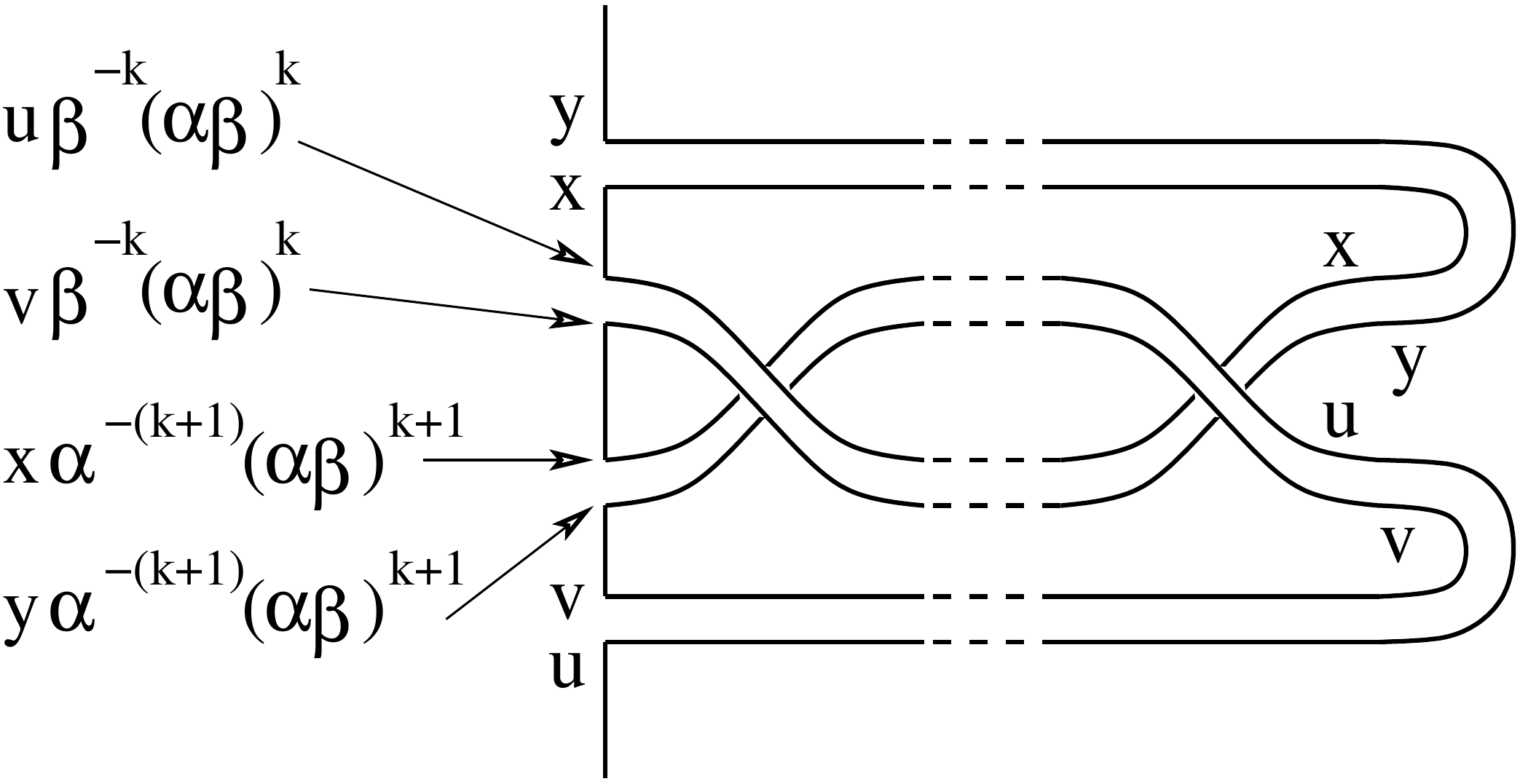}
	\end{center}
	\caption{Punctured torus }
	\label{highgenus}
\end{figure}

\begin{example}\label{ex:torussurface}
	{\rm 
		Let $T_1(k)$ denote the surface ribbon obtained by braiding two ribbons $(2k+1)$-times and closing them to give braided handles of a surface. This surface  is shown in  Figure~\ref{highgenus}, where top and bottom arcs are joined to give a punctured torus. 
			For the labels $x,y, u, v$ assigned to the arcs at the right of the figure, 
		the middle two bands at the left of the figure have arcs with the label 
		$$(u \beta^{-k} (\alpha \beta)^k , v  \beta^{-k} (\alpha\beta )^k ) \times (x \alpha^{-(k+1)} (\alpha \beta)^{k+1} ,
		y \alpha^{-(k+1)} (\alpha \beta)^{k+1})$$ as indicated in the figure.
		The fundamental heap with the open end points labeled $y$ and $u$ is generated by 
		$x,y,u,v$ with relations (1) $ x = u \beta^{-k} (\alpha \beta)^k$, 
		(2) $v  \beta^{-k} (\alpha \beta)^k=x \alpha^{-(k+1)} (\alpha \beta)^{k+1}$, and 
		(3) $  y  \alpha^{-(k+1)} (\alpha \beta )^{k+1} = v$,
		as read from the figure. 
		If we connect the arcs labeled by $y$ and $u$, we obtain a punctured torus, and an additional relation (4) $y=u$ holds. 

		Substituting (3) into (2) we obtain $(\alpha \beta)^{k+1}=\beta^{k+1}$.
		Then (1) implies 
		$$x=u \beta^{-k} (\alpha \beta )^k = u \beta^{-k} \cdot \beta^{k+1} (\alpha \beta)^{-1}
		= u \alpha^{-1} = u (x^{-1}y)^{-1}= uy^{-1} x,$$
		and we obtain the relation (4) $y=u$. Also (3) implies 
		$v = y \alpha^{-(k+1)} \cdot \beta^{k+1} = u \alpha^{-(k+1)}  \beta^{k+1} $ which implies
		$\beta = u^{-1} v = \alpha^{-(k+1)}  \beta^{k+1} $, hence $\alpha^{k+1} = \beta^k $. 
		Therefore the punctured torus $T_1(k)$ 
		has the fundamental heap
				$h(T_1(k))\cong \langle \, y, \alpha, \beta \mid \alpha^{k+1} = \beta^k , (\alpha \beta)^{k+1}=\beta^{k+1}
		\, \rangle$.

}
\end{example}

\begin{figure}[htb]
\begin{center}
\includegraphics[width=3in]{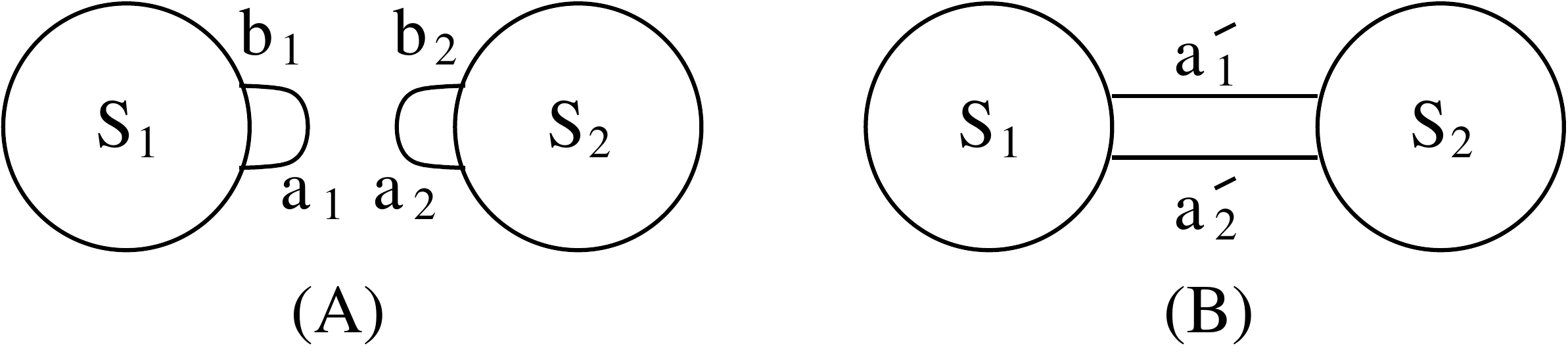}
\end{center}
\caption{Boundary connected sum}
\label{connect}
\end{figure}

The construction of concatenations in the above examples can be generalized to
{\it boundary connected sum} of surfaces, that are commonly used,  as follows. 
Let $S_1$, $S_2$ be surface ribbons. Regard them as embedded in disjoint 
balls as indicated in Figure~\ref{connect} (A). Specify a boundary component $b_i$ in $S_i$, 
$i=1,2$, and isotope a small portion of $b_i$  out of  the boundary of the ball as indicated in 
(A). Then connect the small portions by a short straight band as in (B). 
The resulting surface $S$ is denoted by $S_1 \natural_{(b_1, b_2)} S_2$, or 
$S_1 \natural S_2$ for simplicity. The ambient isotopy type of $S$ depends only on the choice of the boundary components $b_i$.

\begin{proposition}\label{pro:connected}
	Let $S_1$ and $S_2$ be  surface ribbons. Then there is an epimorphism $$\Gamma : \hat h(S_1\natural S_2) \longrightarrow \hat h(S_1)*\hat h(S_2).$$
	Moreover, $\Gamma$ can be extended to an epimorphism $\tilde \Gamma : h(S_1\natural S_2) \longrightarrow F_1*\hat h(S_1) * \hat h(S_2)$.
\end{proposition}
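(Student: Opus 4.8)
The plan is to read a presentation of $h(S_1\natural S_2)$ straight off the obvious diagram and then write down the two maps on generators. Begin with diagrams $D_1,D_2$ of $S_1,S_2$ and pass to the reduced presentations produced in the proof of Theorem~\ref{thm:heap}: $h(S_i)=\langle\, x_i, \mathcal B_i \mid \mathcal R_i\,\rangle$, where $x_i$ is a free generating set (one generator $x_i^{(j)}$ per connected component of $S_i$), $\mathcal B_i$ is the set of ribbon terms, every arc $a$ of $D_i$ carries a label $x_i^{(j)}\Phi_a$ for a ribbon word $\Phi_a$, and every relator in $\mathcal R_i$ is a word in ribbon terms only. Because $\mathcal R_i$ contains no free generator, a Tietze substitution lets us take the free generator of the component carrying the boundary $b_i$ to be the very arc $\gamma_i$ of $D_i$ along which the connecting band of $S_1\natural S_2$ is glued; with this choice $\Phi_{\gamma_i}=1$. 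Write $\overline{\Phi}\in\hat h(S_i)$ for the image of a ribbon word $\Phi$ under the projection $h(S_i)\twoheadrightarrow\hat h(S_i)$ that kills the free generators.

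Next analyze $D:=D_1\sqcup D_2\cup c$, where $c$ is the connecting band, attached through two trivalent fat vertices $v_1,v_2$ placed in the interiors of the arcs $\gamma_1$ and $\gamma_2$. Attaching $c$ subdivides $\gamma_i$ into two arcs $\gamma_i',\gamma_i''$ (its partner arc is likewise subdivided, but the two halves are reidentified through $v_i$), and by Definition~\ref{def:fund} the two arcs of $c$ receive the generators of $\gamma_1',\gamma_1''$ at $v_1$ and of $\gamma_2'',\gamma_2'$ at $v_2$, with no crossing and no new relation introduced. Hence $h(S_1\natural S_2)=h(D)$ is presented by the arcs of $D_1$ and $D_2$ (with $\gamma_i$ replaced by the pair $\gamma_i',\gamma_i''$), the crossing relations $\mathcal T_1\cup\mathcal T_2$, and two identifications among $\gamma_1',\gamma_1'',\gamma_2',\gamma_2''$.

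Now define $\tilde\Gamma\colon h(S_1\natural S_2)\to F_1*\hat h(S_1)*\hat h(S_2)$, with $F_1=\langle t\rangle$, by sending an arc $a$ of $D_i$ with label $x_i^{(j)}\Phi_a$ to $t\,\overline{\Phi_a}$ if $x_i^{(j)}$ is the distinguished generator $\gamma_i$ and to $\overline{\Phi_a}$ otherwise, and both halves $\gamma_i',\gamma_i''$ (and so the two arcs of $c$) to $t$. The verification that $\tilde\Gamma$ respects the relations is short: a crossing relation $z=xu^{-1}v$ of $D_i$ rewrites, via the reduced-form labels, as $\Phi_z=\Phi_x\Phi_u^{-1}\Phi_v$, a relation among ribbon words that is valid in $\hat h(S_i)$; applying $\tilde\Gamma$, the leading $t$'s cancel in pairs and the wanted identity in $F_1*\hat h(S_1)*\hat h(S_2)$ follows, while the two identifications at $v_1,v_2$ become $t=t$. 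The map is onto, since $t$ is in the image and, for a band of $D_i$ with arcs labelled $x_i^{(j)}\Phi$ and $x_i^{(j)}\Phi\alpha$, the quotient of the two images is $\overline\alpha$, and the $\overline\alpha$ for $\alpha\in\mathcal B_1\cup\mathcal B_2$ generate $\hat h(S_1)*\hat h(S_2)$.

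Finally, observe that $\tilde\Gamma$ sends every ribbon term of $D$ — of a band of $D_i$, of either half of the subdivided $\gamma_i$-band, or of $c$ — into $\hat h(S_1)*\hat h(S_2)$, the $t$'s cancelling. Since the argument of Theorem~\ref{thm:heap}, applied to $D$ (the requirement that the diagram be in the standard form of Figure~\ref{base} is only for concreteness of the computation), exhibits the free factor $\hat h(S_1\natural S_2)$ of $h(S_1\natural S_2)$ as the subgroup generated by the ribbon terms of $D$, the map $\tilde\Gamma$ restricts to a homomorphism $\Gamma\colon\hat h(S_1\natural S_2)\to\hat h(S_1)*\hat h(S_2)$, which $\tilde\Gamma$ extends by construction; $\Gamma$ is onto because it carries the ribbon terms of the unsubdivided bands of $D_i$ onto the generators $\overline\alpha$ of $\hat h(S_i)$. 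I expect the only delicate points to be the bookkeeping in the second step — pinning down exactly how attaching $c$ at the two fat vertices alters the diagrammatic presentation, in particular that each $\gamma_i$ is split into two arcs both of which inherit the free generator — and the identification of the reduced fundamental heap of $S_1\natural S_2$ with the subgroup generated by the ribbon terms of $D$; everything else is routine. (The degenerate case in which some $S_i$ is a disk is immediate, since then $S_1\natural S_2$ is isotopic to the other summand.)
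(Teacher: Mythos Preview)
Your argument is correct and uses essentially the same diagrammatic ingredients as the paper, but you run the construction in the opposite order: you build $\tilde\Gamma$ on the full heap $h(S_1\natural S_2)$ by prescribing images of arc generators and then restrict to the ribbon-term subgroup to obtain $\Gamma$, whereas the paper computes an explicit presentation of $\hat h(S_1\natural S_2)$ first, defines $\Gamma$ as the identity on generators, and only then extends by the free generator to $\tilde\Gamma$.

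The paper's route has the advantage of making the structural point explicit. Tracing the merged boundary component counterclockwise through $S_1$ and then $S_2$ shows that $\hat h(S_1\natural S_2)$ has the \emph{same} generating set as $\hat h(S_1)*\hat h(S_2)$, and the \emph{same} relators except that the two ``closure'' relators $\Phi_1,\Phi_2$ (arising from returning to the base arc along $b_1$, resp.\ $b_2$) are replaced by the single product relator $\Phi_1\Phi_2$. The epimorphism is then transparently the identity-on-generators map, and the possible failure of injectivity is pinned down to whether $\Phi_1\Phi_2=1$ is strictly weaker than $\Phi_1=\Phi_2=1$. This formulation feeds directly into Remark~\ref{rmk:epiciso}, Remark~\ref{rmk:monic}, and the discussion of residual colorings in Remark~\ref{pro:connectedcocy}. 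Your construction encodes the same phenomenon but leaves it implicit; consider stating the combined-relator description of $\hat h(S_1\natural S_2)$ as a by-product, since the later remarks rely on it.
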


\begin{proof}
We proceed to give an epimorphism for presentations of the fundamental heaps of $S_1$, $S_2$ and $S_1\natural S_2$, where the generators $x_1$ and $x_2$ corresponding to arcs of the boundaries $b_1$ and $b_2$ 
of Figure~\ref{connect} (A), do not appear in the relators. 
This is possible, as in the proof of Theorem~\ref{thm:heap}, 
since along each boundary we have a relation of the form 
$x_i \Phi_i =x_i$
 for some ribbon word $\Phi_i$. 
  Let $a_1$ and $a_2$ be the arcs on $b_1$ and $b_2$  that are protruding out from $S_1$ and $S_2$  as in (A).
 The presentation of $\hat h(S_1)$ is obtained by moving counterclockwise along the boundary $b_1$, starting from the arc $a_1$ and imposing relators corresponding to the labeling conditions encountered along $b_1$. Finally, one obtains a relator of type $x_1 \Phi_1x_1^{-1}$, corresponding to the equality 
 $x_1 \Phi_1 = x_1$ 
 at $a_1$. Similarly, we have another relator $x_2\Phi_2x_2^{-1}$ for $\hat h(S_2)$. Clearly these relators are equivalent to $\Phi_1$ and $\Phi_2$, respectively.  
 After performing the boundary connected sum $S_1\natural S_2$, as in Figure~\ref{connect} (B), 
 let $a_1'$ and $a_2'$ be upper and lower boundary arcs
 in the bands connecting $S_1$ and $S_2$, respectively, and let $x_1'$ and $x_2'$ be generators assigned to them. 
 We proceed counterclockwise along the boundary 
 starting from $a_1'$ and read off relations at crossings. 
  We meet all the crossings that gave the presentation of $\hat h(S_1)$ obtaining all the same relators until we reach the lower portion $a_2'$ 
  of the band, 
  and obtain $x_1' \Phi_1 = x_2'$. 
 We proceed counterclockwise along the boundary $b_2$ of $S_2$. 
 The label of the arc outcoming from $S_2$ on top of the band connecting $S_1$ and $S_2$ now is given by
 $x_2' \Phi_2 = x_1'$, hence $x_1' \Phi_1 \Phi_2 = x_1'$. 
   As a consequence we have a presentation of $\hat h(S_1\natural S_2)$ where the generators are the union of the generators of $\hat h(S_1)$ and $\hat h(S_2)$, while all the relators are  obtained by the union of all relators of $\hat h(S_1)$ and $\hat h(S_2)$ but $\Phi_1$ and $\Phi_2$, which are now combined into a new relator $\Phi_1\Phi_2$. Let us denote $y_i$, $i = 1, \ldots , n$ the generators of $\hat h(S_1)$, and $z_j$, $j = 1,\ldots , m$ the generators of $\hat h(S_2)$. We distinguish the generators and relators of $\hat h(S_1\natural S_2)$ from those of $\hat h(S_1)$ and $\hat h(S_2)$ by introducing a ``tilde'' symbol on top. The respective relators are named $R_k$, $\Phi_1$ and $Q_t$, $\Phi_2$. We define the map $\hat h(S_1\natural S_2) \cong \langle \, \tilde y_i,\tilde  z_j\  |\ \tilde R_k, \tilde Q_t, \tilde \Phi_1\tilde \Phi_2 \, \rangle$ into $\hat h(S_1)*\hat h(S_2) \cong \langle \,  y_i, z_j\  |\ R_k, Q_t, \Phi_1, \Phi_2 \, \rangle$ by sending $\tilde y_i$ to $y_i$ and $\tilde z_j$ to $z_j$ for all $i$ and $j$. The map is well defined because all the relators $\tilde R_i$ and $\tilde Q_t$ are mapped to relators without tilde, hence vanish, while $\tilde \Phi_1 \tilde \Phi_2$ maps to the product $\Phi_1\Phi_2$ which vanishes as well, since $\Phi_1$ and $\Phi_2$ 
 are relators 
  in the free product separately.
  
The second statement is obtained by mapping the free generator of $h(S_1\natural S_2)$, obtained from Theorem~\ref{thm:heap}, onto the free factor $F_1$.  
\end{proof}

\begin{remark}\label{rmk:epiciso}
	{\rm 
	We observe that if $S_1$ and $S_2$ are  surface ribbons such that the presentations of $\hat h(S_1)$ and $\hat h(S_2)$ admit no nontrivial relator of type $\Phi_i$ in the notation of Proposition~\ref{pro:connected}, then the proof of the proposition gives that $\Gamma$ is an isomorphism of heaps.
}
\end{remark}

\begin{example}
	{\rm 
 Proposition~\ref{pro:connected} and Remark~\ref{rmk:epiciso} apply directly to the surfaces of Example~\ref{ex:loopband}, since the surfaces considered can be constructed as boundary connected sums of  surface ribbons whose (reduced) fundamental heaps are free. 
}
\end{example}

\begin{example}
	{\rm 
		Let $D_i := D(m_i)$, for $i = 1, \ldots , n$ denote a family of $n$  surface ribbons as in Example~\ref{ex:loops}, obtained by closing the ends of the diagram in Figure~\ref{loops1}. Then the presentation of each $\hat h(D_i)$ does not contain relators of type $\Phi_i$ in the notation of Proposition~\ref{pro:connected}, as the computation in Example~\ref{ex:loops} shows. In fact, observe that the labels of the arcs at the top and bottom of Figure~\ref{loops1} coincide, once the relator $\alpha^{m_i}=1$ is imposed. Then Proposition~\ref{pro:connected} and Remark~\ref{rmk:epiciso}, together with a simple inductive argument, imply that $h(D( m_1, \ldots , m_n)) \cong F_1*\hat h(D_1) * \cdots * \hat h(D_n)$. The computations in Example~\ref{ex:loops} show directly that $\hat h(D_i) = \Z_{m_i}$. This gives the fundamental heap of $h(D( m_1,\ldots, m_n))$ as expected. 
}
\end{example}

\begin{example}
{\rm 
	We let $T_g(k_1, \ldots, k_g)$ denote the boundary connected sum of $g$ surfaces $T_1(k_i)$ in Example~\ref{ex:torussurface}, where the boundaries used for the connected sum are chosen to be the base of each copy $T_1(k_i)$ (in standard form, the left most arcs in Figure~\ref{highgenus}). Then, the fundamental heap of $T_g(k_1, \ldots, k_g)$ is $h(T_g(k_1,\ldots, k_g)) = *_{i=1}^gh(T_1(k_i))$,
  obtained from the computation for the torus 
  in Example~\ref{ex:torussurface} 
  by applying Proposition~\ref{pro:connected} and Remark~\ref{rmk:epiciso}, since the relator coming out of connecting arcs at the left of the figure  for  each $T_1(k_i)$ is trivial. 
  
}
\end{example}

 In the following example we see that Proposition~\ref{pro:connected} can be used to determine the minimum number of generators of the fundamental heap.
 
\begin{example}
	{\rm 
		Grusko's Theorem implies  that $\mu(G_1*G_2) = \mu(G_1) + \mu(G_2)$, see \cite{Scott-Wall}.
		From the epimorphism of Proposition~\ref{pro:connected} we see that 
		 $ \mu(S_1\natural S_2)+1 \geq \mu(S_1) + \mu(S_2) $.
		
		In particular, for those surfaces for which $\tilde \Gamma$ is an isomorphism, we have an equality 
		$\mu(S_1\natural S_2) +1 = \mu(S_1) + \mu(S_2) $. 
		This formula can be applied successively for boundary connected sums of more than two  surface ribbons.
		This is in fact the case with the surfaces of Examples~\ref{ex:loopband},~\ref{ex:loops} and~\ref{ex:torussurface}. We have that $\mu((B_1)^m(B_2)^n) = m+n+1$ and $\mu(D( m_1,\ldots , m_n)) = n+2$. Pertaining to the surface $T_g(k_1,\dots,k_g)$ of Example~\ref{ex:torussurface}, we obtain that $\mu (T_g(k_1,\dots,k_g)) = g+1$ or $\mu(T_g(k_1,\dots,k_g)) = 2g+1$, depending on whether $\mu (\langle \, y, \alpha, \beta \mid \alpha^{k+1} = \beta^k , (\alpha \beta)^{k+1}=\beta^{k+1}
		\, \rangle) = 2, 3$. Set $H= \langle \, y, \alpha, \beta \mid \alpha^{k+1} = \beta^k , (\alpha \beta)^{k+1}=\beta^{k+1}
		\, \rangle$ for simplicity. It is well known that the number of generators of $G/[G,G]$ provides a lower bound to the minimum number of generators of a group $G$. From the abelianization of $H$, namely $H/[H,H] = \Z\oplus \Z_{k+1} \oplus \Z_k$, we see that $H$ has at least three generators. It follows that $\mu(T_g(k_1,\dots,k_g)) = 2g+1$. We observe that this is independent of the crossings $k_i$ of each torus surface component of $T_g(k_1,\dots,k_g)$.
	}
\end{example}

\begin{remark}\label{rmk:monic}
	{\rm 
It is not clear whether the epimorphism $\Gamma$ of Proposition~\ref{pro:connected} is also 
 always monic.
This situation affects a formula for the cocycle invariant under boundary connected sum.
If $\Gamma$ is monic, then the extra term 
in the cocycle invariant considered  in Remark~\ref{pro:connectedcocy}  would vanish.
}
\end{remark}

%

\subsection{Adding a twisted band and realizations of the fundamental heap}

In Figure~\ref{adloop}, a local operation of a surface ribbon is depicted.
On the left, a single ribbon portion of a surface ribbon $S$ is depicted.
A twisted band is attached to the ribbon as depicted in the figure to obtain a new surface ribbon $S'$.
The symbols involving $\psi$ will be used later.
We call this operation an {\it addition of a (positively) twisted band}. An addition of a negatively twisted band is similarly defined with the opposite crossing information for the added band. 
Note that the number of connected components of the surface does not change
under this operation, and 
the number  of connected components  of the boundary curves changes by one under this operation;
if a band is attached to distinct boundary components, then the number reduces by one, and the opposite otherwise.

\begin{figure}[htb]
\begin{center}
\includegraphics[width=2in]{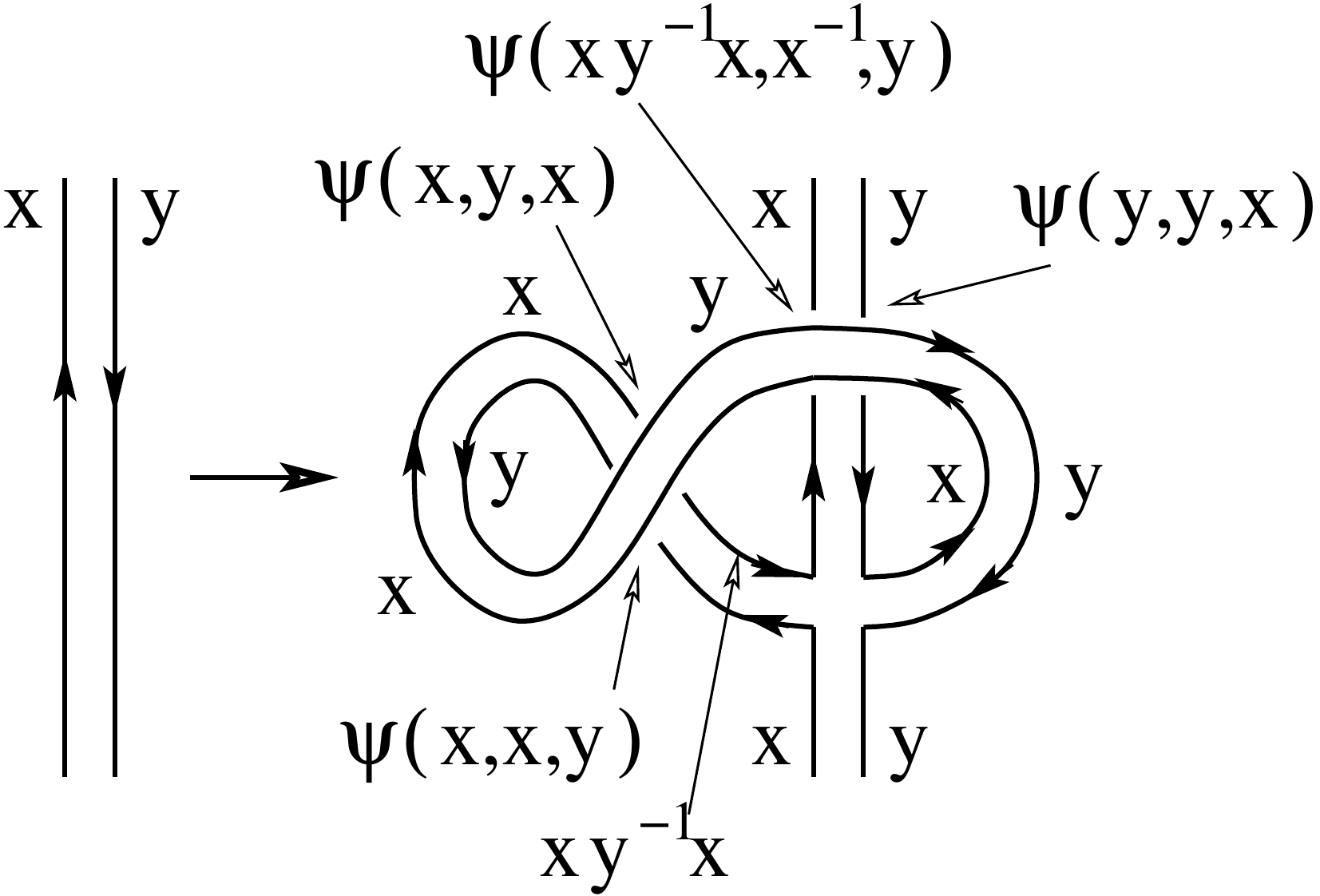}
\end{center}
\caption{Adding a twisted band}
\label{adloop}
\end{figure}

\begin{lemma} \label{lem:adloop}
Let $S'$ be a surface ribbon obtained from $S$ by adding  a twisted band.
Then we have $h(S')\cong h(S)$ and $\hat{h}(S') \cong \hat{h}(S)$. 
\end{lemma}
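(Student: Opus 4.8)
The plan is to track the generators and relations of $h(S')$ introduced by the added twisted band, and show that after a sequence of Tietze transformations one recovers exactly the presentation of $h(S)$ together with a trivial free factor that reattaches, leaving the isomorphism type unchanged. First I would fix a diagram $D$ of $S$ and let $D'$ be the diagram of $S'$ obtained by performing the local move of Figure~\ref{adloop}: a band with one full twist (equivalently, a small loop as in Figure~\ref{loop}) is glued onto a single ribbon segment of $D$. Since the twisted band meets the original diagram only along one ribbon segment where its two feet are attached, the only new generators are those assigned to the double arcs of the band itself and of the loop, and the only new relations are the crossing relations coming from the self-crossing of the twisted band together with the feet-adjacency identifications. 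All generators and relations of $D$ away from this region are carried over verbatim.

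The key computation is then entirely local, and in fact it is essentially the $m=1$ case of Example~\ref{ex:loops}: if $x,y$ are the two generators on the ribbon segment to which the band is attached, and $\alpha = x^{-1}y$ is the corresponding ribbon term, then reading the crossing relations around the added loop forces a relation of the form $\alpha^{\pm 1}=1$ together with the statement that the arcs exiting the band carry the same labels $x,y$ they entered with. (For a positively twisted band one gets $\alpha = 1$, for a negatively twisted band $\alpha^{-1}=1$; in either case $\alpha=1$, i.e. $x=y$.) Using the relation $\alpha=1$ to eliminate every occurrence of $y$ in favor of $x$ via Tietze transformations, and then eliminating the auxiliary band generators through their defining crossing relations, collapses the entire added region, returning precisely the presentation $\langle {\cal A}\mid {\cal T}\rangle$ of $h(S)$. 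Hence $h(S')\cong h(S)$ as groups, and therefore as group heaps.

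For the reduced fundamental heap, I would run the proof of Theorem~\ref{thm:heap} for $S'$ in parallel with that for $S$: the standard-form decomposition of Figure~\ref{base} is unaffected by the local addition of a twisted band, the single free generator $x$ is the same, and the set ${\cal B}$ of ribbon-term generators and relations ${\cal R}$ among ribbon words produced for $S'$ differs from that for $S$ only by the new ribbon term of the added band and its defining relation $\alpha_{\rm new}=1$ (or, if the band is attached to an existing segment, simply by the already-present $\alpha=1$), which Tietze-cancels. Thus $\hat h(S') = \langle {\cal B}\mid{\cal R}\rangle \cong \langle {\cal B}_S\mid{\cal R}_S\rangle = \hat h(S)$, and by uniqueness of free-product decomposition \cite{Scott-Wall} the reduced fundamental heaps agree.

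The main obstacle I anticipate is purely bookkeeping: one must verify carefully that the orientations of the boundary curves of the added band are antiparallel (as required for surface ribbons) and that, consequently, the crossing relation read at the self-crossing of the twisted band really produces $\alpha^{\pm1}=1$ rather than a more complicated word — in other words, that the local picture genuinely reduces to the loop computation of Example~\ref{ex:loops} and not to something that could alter a ribbon word elsewhere. Once the local relation $\alpha=1$ is established and the exiting arcs are checked to carry the incoming labels, the rest is a routine sequence of Tietze moves, so no further difficulty is expected.
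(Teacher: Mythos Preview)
Your proposal contains a substantive error. You assert that the self-crossing of the added band forces $\alpha = x^{-1}y = 1$, where $x,y$ are the labels on the two boundary arcs of the \emph{host} ribbon segment of $S$. This is not what happens: the crossing of the twisted band involves only the band's own arcs, and the relations it produces express the \emph{new} generators (the arcs of the added band) as words in $x$ and $y$, without imposing any relation between $x$ and $y$ themselves. The $m=1$ case of Example~\ref{ex:loops} is not the right analogy --- there the band is closed into an annulus, so its two ends are identified and the loop genuinely forces $\alpha^m=1$; here the band is attached at both feet to an existing ribbon and its arc labels are inherited from the host, so no closing relation on $x,y$ arises. If your claim were correct the host ribbon would become monochromatic, which would strictly shrink $h(S)$: applied to a trivial handle of $(B_1)^m(B_2)^n$ in Example~\ref{ex:loopband} it would kill a free generator, contradicting the very lemma you are proving. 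There is also a logical slip in the next sentence: even granting $x=y$, Tietze-eliminating $y$ produces a \emph{quotient} of $h(S)$, not $h(S)$, so ``returning precisely the presentation $\langle{\cal A}\mid{\cal T}\rangle$ of $h(S)$'' would not follow from your premises.

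The paper's argument is simply that every arc label on the added band is uniquely expressed in $x,y$ via the crossing relations (as one reads off Figure~\ref{adloop}), so these generators Tietze-eliminate and the presentation of $h$ is literally unchanged. For $\hat h$ the paper does not rerun the standard-form analysis of Theorem~\ref{thm:heap}: since adding a twisted band preserves the number $\nu$ of connected components, one writes $h(S')\cong h(S)\cong F_\nu*\hat h(S)$ and invokes uniqueness of free-product decomposition to conclude $\hat h(S')\cong\hat h(S)$ directly.
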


\begin{proof}
As depicted in Figure~\ref{adloop}, the generators assigned on the arcs of the added band 
are uniquely expressed by $x$ and $y$, the generators of the ribbon of $S$ where
the band is attached, by means of relations at crossings. 
This shows that the presentations of the diagrams of $S$ and $S'$ define isomorphic groups.
Since the number of connected components of $S$ and $S'$ is the same as noted above,
Theorem~\ref{thm:heap} and the uniqueness of free product of groups~\cite{Scott-Wall} 
imply  that $\hat{h}(S') \cong \hat{h}(S)$. 
\end{proof}

\begin{example} \label{ex:loopbandgenus}
{\rm 
Let $D(n,k)$ be a disk with $n$ bands with $m_i$ loops, $i=1, \ldots, n$, and $k$ trivial bands, as in Example~\ref{ex:loopband}.
Add $\ell (\leq n+k) $ twisted loops as in Figure~\ref{adloop} onto $g (>0)$ distinct 
bands among $n+k$ bands attached to the disk, to obtain a surface $S(g, n+k)$ of genus $g$ and the number of boundary component $b:=(n+k +1)-g$. 
This surface  $S(g, n+k)$ has the same fundamental heap as that of $D(n,k)$ in Example~\ref{ex:loopband},
but has a non-zero genus $g$.
}
\end{example}

Proposition~\ref{prop:ab}
is restated as follows, using Example~\ref{ex:loopbandgenus}.

\begin{proposition}\label{prop:ab2}
 Let $A = \Z^{\oplus (k+1)} \oplus [\oplus_{i=1}^n \Z_{m_i}]$ be a finitely generated abelian group. Then, for any $g, b \in \Z_{\geq 0}$ such that g+b = n+k+1, there exists a connected surface ribbon $S(g,b)$ of genus $g$ and $b$ boundary components such that ${\rm Ab}[ (h(S(g,b)) ] \cong A$. 
 \end{proposition}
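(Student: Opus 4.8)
The plan is to build the required surface ribbon $S(g,b)$ explicitly, as a variant of the surface $D(m_1,\dots,m_n;k)$ from Example~\ref{ex:loops} with genus introduced by adding twisted bands as in Example~\ref{ex:loopbandgenus}, and then to invoke the lemmas already established to compute the fundamental heap and its abelianization. The key observation is that Proposition~\ref{prop:ab} already produces, for the given $A = \Z^{\oplus(k+1)} \oplus [\oplus_{i=1}^n \Z_{m_i}]$, a connected surface ribbon $S$ with $\mathrm{Ab}[h(S)] \cong A$; indeed, by the computation in Example~\ref{ex:loops}, the disk with $n$ bands having $m_1,\dots,m_n$ loops and $k$ additional trivial bands, namely $D(m_1,\dots,m_n;k)$, has $h(D(m_1,\dots,m_n;k)) \cong F_{k+1} * \Z_{m_1} * \cdots * \Z_{m_n}$, whose abelianization is exactly $A$. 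This $D(m_1,\dots,m_n;k)$ is a planar surface of genus $0$, built from a disk by attaching $n+k$ bands, and therefore it has $n+k+1$ boundary components (each added band to a disk adds one boundary component when the band is untwisted/planar).

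First I would fix this surface $D := D(m_1,\dots,m_n;k)$, record that it is connected, has genus $0$, has $n+k+1$ boundary components, and has $h(D) \cong F_{k+1} * \Z_{m_1} * \cdots * \Z_{m_n}$ with abelianization $A$. Next, given $g, b \in \Z_{\geq 0}$ with $g + b = n+k+1$, I would apply the addition of a twisted band from Figure~\ref{adloop} repeatedly: as in Example~\ref{ex:loopbandgenus}, adding a twisted loop to $g$ distinct bands among the $n+k$ bands attached to the disk raises the genus by $g$ while merging boundary components so that the resulting surface $S(g,b)$ has genus $g$ and exactly $(n+k+1) - g = b$ boundary components. By Lemma~\ref{lem:adloop}, each such twisted-band addition leaves the fundamental heap unchanged, so $h(S(g,b)) \cong h(D) \cong F_{k+1} * \Z_{m_1} * \cdots * \Z_{m_n}$, and hence $\mathrm{Ab}[h(S(g,b))] \cong A$, which is the claim. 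I must check that $g \leq n+k$ so that there are enough bands to add the twists to; this holds because $g = n+k+1 - b \leq n+k$ whenever $b \geq 1$, and the edge case $b = 0$ (forcing $g = n+k+1$) requires a small separate argument, e.g. attaching one extra trivial band first and then twisting, or noting that a closed-up configuration with the right count can be arranged directly.

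The bookkeeping I expect to be the main obstacle is the boundary-count arithmetic under twisted-band addition, and in particular the boundary case $b=0$. Adding a twisted band can either split one boundary component into two or merge two into one, depending on whether the two feet of the band lie on the same or different boundary curves (this is exactly the dichotomy noted just before Lemma~\ref{lem:adloop}); to guarantee the genus goes up by one and the boundary count goes down by one at each step, one must attach each twisted loop to a single band in a way that its two feet sit on the same boundary curve, and one must verify that after each step there is still a band available with this property. I would handle this by choosing an explicit ordering of the $g$ bands to be twisted and tracking the boundary curves at each stage, or more cleanly by invoking the classification of compact orientable surfaces with boundary: any connected such surface is determined up to homeomorphism of the pair by its genus and number of boundary components, so it suffices to produce \emph{some} connected surface ribbon of genus $g$ with $b$ boundary components obtained from $D$ by twisted-band additions, and the Euler characteristic identity $\chi = 2 - 2g - b = 1 - (n+k) = \chi(D)$ together with Lemma~\ref{lem:adloop} (twisted-band addition preserves $\chi$ and the fundamental heap and the number of connected components) then forces the conclusion. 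This reduces the problem to the purely topological fact that the twisted-band moves suffice to realize every admissible $(g,b)$ with the given Euler characteristic, which follows from Matsuzaki's move calculus \cite{Matsu} or from elementary surface topology.
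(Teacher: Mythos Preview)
Your approach is essentially the paper's own: the paper's entire proof is the sentence ``Proposition~\ref{prop:ab} is restated as follows, using Example~\ref{ex:loopbandgenus},'' and your construction is exactly that example spelled out, followed by Lemma~\ref{lem:adloop}.

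Two slips in your final bookkeeping paragraph are worth fixing, though they do not affect the core argument. First, you have the boundary dichotomy backwards: to raise the genus and \emph{decrease} the boundary count, the two feet of the twisted band must lie on \emph{distinct} boundary components (as stated just before Lemma~\ref{lem:adloop}), not on the same one; in $D(m_1,\dots,m_n;k)$ each attached band has one side on the outer boundary and one on an inner boundary, so attaching the twisted loop across that band does exactly this. Second, twisted-band addition does \emph{not} preserve $\chi$ --- it attaches a $1$-handle and lowers $\chi$ by one --- so your Euler-characteristic reduction at the end is off. Finally, the $b=0$ edge case is vacuous: surface ribbons have nonempty boundary by definition in this paper, so $b\geq 1$ and hence $g\leq n+k$, which is precisely the bound you need.
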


\begin{proposition} \label{prop:realize}
Let $S$ be a connected surface ribbon with the genus $g(S)$ and the number of 
boundary components $b(S)$ such that $b(S) >1$. 
 Then for any  integers $g' \geq 0$ and $ b' >0$ such that $g' \geq g(S)$  and $b' \geq  b(S) - (g' -  g(S))  $, there exists a surface ribbon $S'$ 
with ${h}(S')\cong {h}(S)$ such that  $g(S')=g'$ and $b(S')=b'$. 

In particular,  for any $S$ with $b(S)>1$ and any $\chi ' \leq \chi(S)$ which denotes the Euler characteristic, there exists $S'$ such that  ${h}(S')\cong {h}(S)$ and   $\chi(S') =\chi'$. 

If $b(S)=1$, the statement holds for any  $g' \geq g(S)$ and $b'  \geq   b(S)  $, and for any  $\chi ' < \chi(S)$.

The statements hold for $\hat{h}$ as well. 
\end{proposition}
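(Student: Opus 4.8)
The plan is to realize the change in topological type of a surface ribbon by a sequence of the two operations already at our disposal that do not alter the fundamental heap: addition of a twisted band (Lemma~\ref{lem:adloop}), which raises the genus by one and either merges two boundary components or splits one, and the ``trivial band'' move producing $D(\ldots;k)$-type pieces of Example~\ref{ex:loops}, which raises the number of boundary components by attaching an untwisted handle. The key numerical observation is that these two moves generate, from $(g(S),b(S))$, exactly the set of pairs $(g',b')$ permitted by the hypotheses: adding a twisted band connecting two distinct boundary components sends $(g,b)\mapsto(g+1,b-1)$ (possible while $b>1$), adding a twisted band with both feet on the same boundary component sends $(g,b)\mapsto(g+1,b+1)$, and attaching a trivial untwisted band between distinct components is $(g,b)\mapsto(g,b)$ topologically but can be used, as in Example~\ref{ex:loopbandgenus}, to first create extra boundary components to operate on. So the first step is to write down this semigroup-of-moves computation carefully and check it matches the inequalities $g'\ge g(S)$ and $b'\ge b(S)-(g'-g(S))$.

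Concretely, I would argue as follows. Given the target $(g',b')$, set $d:=g'-g(S)\ge 0$. The constraint $b'\ge b(S)-d$ together with $d\ge 0$ is exactly what is needed so that one can go from $b(S)$ down toward (or below) $b'$ using the $(g,b)\mapsto(g+1,b-1)$ move some number of times $a\le d$ and then from $b(S)-a$ back up to $b'$ using the $(g,b)\mapsto(g+1,b+1)$ move $d-a$ times, provided intermediate values of $b$ stay $\ge 1$; the hypothesis $b(S)>1$ guarantees the first decrement is legal, and once $b=1$ one can only use the increment move, which is consistent with $d-a$ being applied afterward. A short case analysis on whether $b'\ge b(S)$ or $b'<b(S)$ pins down the right value of $a$ in $\{0,1,\dots,d\}$: choose $a$ so that $b(S)-a$ and then $+(d-a)$ lands on $b'$, i.e. $b(S)-a+(d-a)=b'$, giving $a=(b(S)+d-b')/2$ when that is a nonnegative integer $\le d$, and otherwise insert one extra pair of (merge, split) moves to fix parity — this costs two units of genus, which is available because we may also first enlarge $g'$ freely and then note the genus only ever goes up. The $b(S)=1$ case is handled the same way but only using the increment move, covering $b'\ge b(S)$ and hence $\chi'<\chi(S)$.

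Having produced $S'$ with the prescribed $(g',b')$, invariance of the fundamental heap is immediate: each elementary move used is either an addition of a twisted band (Lemma~\ref{lem:adloop}, giving $h(S')\cong h(S)$ and $\hat h(S')\cong\hat h(S)$) or an addition of a trivial untwisted band, whose effect on $h$ is the same computation as in Examples~\ref{ex:loopband} and~\ref{ex:loops} — it contributes no new relation, only a generator that is immediately eliminated, so $h$ is unchanged; and since the number of connected components is preserved throughout, Theorem~\ref{thm:heap} and uniqueness of free-product factorization~\cite{Scott-Wall} give $\hat h(S')\cong\hat h(S)$ as well. Finally, the Euler characteristic statements follow by translating: for a connected surface with boundary $\chi=2-2g-b$, so $\chi(S')=\chi(S)$ iff $2g'+b'=2g(S)+b(S)$, and any $\chi'\le\chi(S)$ (resp. $\chi'<\chi(S)$ when $b(S)=1$) is realized by some admissible $(g',b')$ with $g'\ge g(S)$ — pick $g'=g(S)$ and $b'=b(S)+(\chi(S)-\chi')$, noting $\chi(S)-\chi'\ge 0$ and checking $b'\ge b(S)-(g'-g(S))=b(S)$, which holds, and in the $b(S)=1$ case one needs $\chi'<\chi(S)$ to keep $b'\ge 1$ after possibly using the merge move. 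The main obstacle I anticipate is purely bookkeeping: organizing the parity/ordering argument for the sequence of merge and split moves so that every intermediate boundary count stays positive while hitting $(g',b')$ on the nose; once the move calculus is set up this is a finite case check, and the heap-invariance input is entirely supplied by Lemma~\ref{lem:adloop} and Theorem~\ref{thm:heap}.
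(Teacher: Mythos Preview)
Your proposal contains a genuine topological error that breaks the argument. Adding a twisted band with both feet on the \emph{same} boundary component does \emph{not} send $(g,b)\mapsto(g+1,b+1)$: any single band attachment lowers $\chi$ by $1$, so $2g'+b'=2g+b+1$, which is incompatible with $(g+1,b+1)$. The correct effect is $(g,b)\mapsto(g,b+1)$ (the boundary circle is split, genus unchanged); this is exactly the paper's Case~(i). Your merge move $(g,b)\mapsto(g+1,b-1)$ is correct and is the paper's Case~(ii). Because in your scheme both moves raise $g$, you are forced to use exactly $d=g'-g(S)$ of them, whence $b'=b(S)+d-2a$; you then cannot reach any $b'>b(S)+d$, and your parity patch (``insert one extra pair of moves'') overshoots $g'$ by $2$, so it does not repair the gap.

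Your third move is also incorrect on both counts. Attaching any band changes $\chi$ by $-1$, so no band addition can keep both $g$ and $b$ fixed; and heap-theoretically an \emph{untwisted} trivial band adds a free generator to $h(S)$ (this is precisely Example~\ref{ex:loopband}, where $h((B_1)^m)\cong F_{m+1}$), so $h$ is not preserved. The twist in Lemma~\ref{lem:adloop} is exactly what forces the new arc labels to be expressible in the old ones. With the correct moves the argument is simpler than yours: set $c=g'-g(S)$ and $a=b'-b(S)+c\ge0$, perform $a$ same-component twisted-band additions first (raising $b$ to $b(S)+a$) and then $c$ distinct-component additions (each legal since the intermediate boundary count stays $\ge b'+1\ge 2$); no parity issue arises. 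Lemma~\ref{lem:adloop} gives $h$-invariance at every step, and the $\hat h$ statement then follows from Theorem~\ref{thm:heap} as you say.
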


\begin{proof}
We show that for any $S$ 
such that $ b(S ) -1>0$
there exists $S'$ and $S''$ with $h(S'), h(S'')  \cong h(S)$ such that:
{\rm (i)}  $g(S')=g(S)$ and $b(S')=  b(S) + 1 $,  and 
{\rm (ii)}  $g( S'' )=g(S)+1$ and $ b(S'') = b(S ) -1 $.
For a given $g'$ and $b'$ as stated, then, we apply Case (ii) to obtain 
$S''$ such that $g(S'')=g'$ and $b(S'')=b(S) - (g'- g(S))$, and apply Case (i) to obtain 
$S'$ with the desired $g(S')$ and $b(S')$.

Let $S'$ be the surface ribbon obtained from $S$ by adding  a twisted band to the same component of the boundary curve   as in Lemma~\ref{lem:adloop}. Then by the lemma ${h}(S')\cong {h}(S)$ and the condition (i) is satisfied. 
If $S'$ is obtained by adding a twisted loop to two distinct components, then (ii) is satisfied.

In Case (i), we have $\chi(S')=2 - b(S') - 2g(S') = 2 - (b(S) + 1 ) - g(S)=\chi(S) -1$, and in Case (ii), 
we have $\chi(S')=2 - b(S') - 2g(S') = 2 - (b(S) - 1 ) - 2 ( g(S) + 1) = \chi(S) -1$, so that the statement for $\chi$ holds. 

Alternatively, attaching a twisted band corresponds, homotopically, to attaching a loop, hence it contributes to $-1$ to the Euler characteristic, and we have $\chi(S')=\chi(S) -1$. 

If $b(S)=1$, then Case (ii) in the proof cannot be performed. 
If Case (i) is performed to $S$, we obtain $S'$ with  ${h}(S')\cong {h}(S)$  such that $g(S')=g(S)$ and $b(S')=  b(S) + 1 =2$, and $\chi(S')=\chi(S)-1$. 
If we perform Case (ii) to $S'$, we obtain $S''$ with  ${h}(S'')\cong {h}(S)$ 
such that $g(S'')=g(S)+1$,  $b (S'')= b(S')-1 =1$ and $\chi(S')=\chi(S)-1$. 
Hence the statements for $b=1$ follow.

The statement for $\hat h$ follows from Theorem~\ref{thm:heap} and the uniqueness of the free product.
\end{proof}

\begin{proposition}\label{prop:maxEuler}
	Let $S$ be a surface ribbon, then there exists a surface ribbon $S^*$ having maximum Euler characteristic among the surface ribbons with fundamental heap isomorphic to $h(S)$.
\end{proposition}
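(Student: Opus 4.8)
The plan is to show that the Euler characteristic of surface ribbons realizing a fixed fundamental heap $h(S)$ is bounded above, so that a maximum exists. The key observation is that $\chi$ is controlled by the number of generators and relators appearing in the presentation coming from a diagram. Specifically, for a surface ribbon $S$ in standard form (as in the proof of Theorem~\ref{thm:heap}), the presentation of $h(S)$ has one free generator per connected component plus one generating ribbon term per ribbon segment, and the genus and number of boundary components are determined combinatorially by the underlying trivalent graph. So the first step is to make precise the relation $\chi(S) = \nu(S) - (\text{rank of }H_1\text{ of the spine})$, i.e. $\chi(S) = 2\nu(S) - b(S) - 2g(S)$ with the topology of the spine recorded, and to observe that adding twisted bands (Lemma~\ref{lem:adloop}) strictly decreases $\chi$ while fixing $h(S)$, so one expects the maximum to be achieved by a ``minimal'' ribbon with no superfluous handles.

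Next I would argue that $\chi(S)$ is bounded above in terms of invariants of the group $h(S)$ alone. From Theorem~\ref{thm:heap} we have $h(S)\cong F_\nu * \hat h(S)$ with $\nu = \nu(S)$ the number of components, and by the uniqueness of free-product decomposition the number $\nu$ of free factors, hence $\nu(S)$, is bounded by the free rank of $h(S)$ — in fact $\nu(S)\le \mathrm{rank}(S)$, and $\mathrm{rank}(S)$ is an invariant of the group $h(S)$ by the Remark following Definition~\ref{def:rank}. On the other hand, for the surface to have fundamental heap $h(S)$ the spine's first Betti number (which equals $1 - \chi(\text{spine}) $ per component, equivalently $b(S) + 2g(S) - \nu(S)$ after accounting for boundary) cannot be made arbitrarily small without losing relations: every ribbon segment contributes a generating ribbon term and only relations among these can cut down $\mu(h(S))$. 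So I would bound $b(S)+2g(S)$ below, hence $\chi(S)=2\nu(S) - b(S) - 2g(S)$ above, using that $\mu(h(S))$ and $\mathrm{rank}(S)$ are fixed. Concretely: $\chi(S) \le 2\,\mathrm{rank}(S) - \mu(h(S))$ or a similarly explicit bound should drop out, the point being only that \emph{some} finite upper bound depending on $h(S)$ exists.

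Finally, since $\chi$ of a surface ribbon is always an integer and is bounded above over the (nonempty — $S$ itself qualifies) family of ribbons with fundamental heap isomorphic to $h(S)$, the supremum is attained; pick $S^*$ realizing it. This is essentially the well-ordering of $\Z$ applied to $\{\chi(S') : h(S')\cong h(S)\}$ once boundedness above is in hand.

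The main obstacle is establishing the upper bound on $\chi$ rigorously: one must rule out the possibility that clever choices of knotting/linking of the ribbons allow the spine to have very few edges while the relations still force $h(S)$ to be the given group — i.e. one needs that the number of ribbon segments (equivalently $b(S) + 2g(S) - \nu(S) + $ correction) is genuinely bounded below by an invariant of $h(S)$, not just by the particular presentation at hand. The cleanest route is probably via $\mu(h(S))$: each generating ribbon term that survives into a minimal generating set forces at least one ribbon segment, and Grushko-type control of $\mu$ under free products (used already in the examples above) together with $\nu(S) \le \mathrm{rank}(S)$ pins down both $\nu(S)$ and a lower bound for $b(S)+2g(S)$, hence an upper bound for $\chi(S)$.
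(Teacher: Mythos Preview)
Your overall strategy---show that $\chi$ is bounded above on the set of surface ribbons with fundamental heap isomorphic to $h(S)$, then invoke the well-ordering of $\Z$---is correct and is exactly what the paper does. But the paper's bound is far simpler than yours: since every connected component of a surface ribbon is a compact orientable surface with nonempty boundary, each component has Euler characteristic at most $1$, so $\chi(S') \le \nu(S')$. Combined with your own observation $\nu(S') \le \mathrm{rank}(S)$ (from Theorem~\ref{thm:heap} and uniqueness of free-product decomposition), this gives $\chi(S') \le \mathrm{rank}(S)$ immediately, and the argument is done.

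Your detour through $\mu(h(S))$ and lower bounds on $b(S)+2g(S)$ is unnecessary, and the ``main obstacle'' you flag is not an obstacle at all. The inequality $\mu(h(S)) \le 2\nu(S') - \chi(S')$ does hold for \emph{any} $S'$ with $h(S')\cong h(S)$, simply because the standard-form presentation always has exactly $2\nu(S')-\chi(S')$ generators regardless of how the ribbons are knotted or linked; any presentation gives an upper bound for $\mu$. So your proposed bound $\chi(S') \le 2\,\mathrm{rank}(S) - \mu(h(S))$ is valid (and is essentially the content of the paper's Proposition~\ref{prop:rank}, proved \emph{after} this one), but it is overkill here: the trivial topological bound $\chi \le \nu$ suffices and avoids the whole discussion of ribbon segments, minimal generating sets, and Grushko.
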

\begin{proof}
	Let us set $t := {\rm rank}(S)$, as in Definition~\ref{def:rank}.
	 Suppose, 
	for the sake of 
	contradiction, that such a maximum surface ribbon does not exist. Then, 
	there exists 
	a sequence $S_k$ with the properties that $h(S_k) \cong h(S)$ and $\chi(S_k) > \chi(S_{k-1})$ for $k \in \N$. For $k$ large enough, we have that $\chi(S_k) > t$. But then $\chi(S_k)$ is larger than $\chi(\sqcup_t \mathbb D)$, where $\mathbb D$ is the disk. Since $\sqcup_t \mathbb D$ is the surface with $t$ connected components with
	the largest
	Euler characteristic, it follows that $S_k$ has more than $t$ connected components. Consequently, from Theorem~\ref{thm:heap} it follows that ${\rm rank}(S_k) > t$, which is absurd, since $h(S_k) \cong h(S)$ by assumption, 
	and the rank of a ribbon surfaces is well defined by uniqueness of free product decomposition. 
\end{proof}

\begin{remark}
{\rm 
We say that $S'$ is obtained from $S$ by {\it removing a twisted band} if $S$ is obtained from $S'$ by adding a twisted band, as described above. 
Let $S$ and $S^*$ be as in Proposition~\ref{prop:maxEuler}.
We claim that we cannot remove a twisted band from $S^*$, 
 in the sense that there exists no surface ribbon $S'$ such that $S^*$ is obtained from $S'$ by adding a twisted band. 
In fact, if we could find such a surface ribbon $S'$,  then it follows (from proof of Proposition~\ref{prop:realize}) that $\chi(S')> \chi(S^*)$ and $h(S')  = h(S^*)$, contradicting that $\chi(S^*)$ is maximum.
}
\end{remark}

\begin{figure}[htb]
\begin{center}
\includegraphics[width=5.5in]{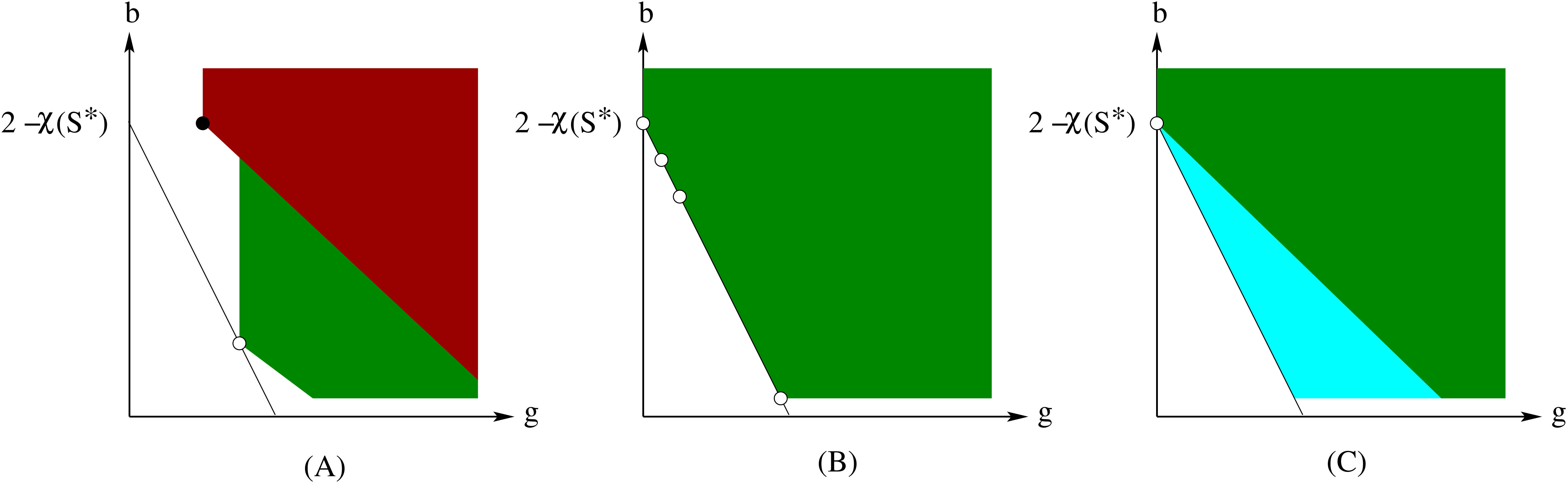}
\end{center}
\caption{Possible values of $(g, b)$}
\label{gb}
\end{figure}

We analyze possible values of genus $g$ and the number of boundary components $b$
for a given $h(S)$.
We assume that $S$ is a connected surface ribbon, since  the same argument can be applied to each component for disconnected surface ribbons. 
Possible values of $(g(S'), b(S') )$ with $h(S')\cong h(S)$ 
are schematically depicted in Figure~\ref{gb} (A). 
The axes represent $g$ and $b$ as indicated.
 The value $(g(S), b(S))$ of a given surface ribbon $S$ is 
indicted by a black dot located at the left top. The possible range of $(g,b)=(g(S'), b(S'))$ for some $S'$ 
guaranteed by Proposition~\ref{prop:realize} is  depicted by (red) dark shaded region. 
The region is bounded from the left by the line $g=g(S)$, and below by the lines
$b-b(S)=-(g-g(S))$ and $b=1$. 

 In Figure~\ref{gb} (A), the white circle on the line $b=(2-\chi(S^*)) - 2g $ represents $(g(S^*), b(S^*))$.
 The line represents the formula
 $\chi=2-b-2g$. Since $\chi(S^*)$ is maximum, the $y$-intercept $2- \chi(S^*) $ is minimum among all lines
 though $(g(S'), b(S'))$ with $h(S')\cong h(S)$. 
 Hence all possible values of $(g,b)$ are bounded on the left by $g=0$,  and below by 
$b= (2 - \chi(S^*)) - 2g $  and $b=1$.

\begin{example}\label{ex:free}
{\rm
We consider surface ribbons $S$ with $h(S)\cong F_k$, the free group of rank $k>0$. 
By Example~\ref{ex:loopband},  the surface ribbon obtained by boundary connected sum
$S=(B_1)^m(B_2)^n$ of $m$ trivial bands $B_1$ and $n$ trivial crossed band pairs $B_2$ 
have $h(S)\cong F_{m+n+1}$. 
In Figure~\ref{gb} (B), the left top white circle represents $S=(B_1)^{k-1}$, with $b(S)=k$ and $g(S)=0$. 
Inductively, the white dots represent $(B_1)^{k-3}(B_2)^1, (B_1)^{k-5}(B_2)^2, \ldots$ along the line 
$b=k-2g$. The bottom point on the line  is $S=(B_2)^{k/2}$ with $(g(S), b(S))=((k/2), 1)$ if $k$ is even,
and is $S=(B_1)^1(B_2)^{(k-1)/2}$ with $(g(S), b(S))=((k-1)/2, 2)$ if $k$ is odd (in this case there is no point on the line with $b=1$). 
The union of the regions bounded by $g=0$, $b-b(S)=-(g-g(S))$ and $b=1$  described above 
for these points $S$ 
cover the integral points bounded by $g=0$, $b=k - 2g $  and  $b=1$ as represented by the green shaded region in Figure~\ref{gb} (B). 
The green region in (A) sweeps out that of (B) over all white points on the line of slope $-2$. 

 }
\end{example}

\begin{example}
{\rm
Next we consider $S=D(  m_1, \ldots, m_n)$, $m_j>1$, in Example~\ref{ex:loops},
where $h(S)\cong \Z * \Z_{m_1} * \cdots *\Z_{m_n}$. 
The white point in the figure represents $S$, with $g(S)=0$ and $b(S)=n+1$. 
The region of integral points representing $S'$ with $h(S')\cong h(S)$  by Proposition~\ref{prop:realize}
is represented by the green region in Figure~\ref{gb} (C).

We compare this region in (C) with the region in (B). 
The region (B) was realized through existence of points on the line $b=(2 - \chi(S^*)) -2g$.
In this case of $S=D(  m_1, \ldots, m_n)$, even if we 
assume that this $S$ is of maximum Euler characteristic,
we do not have surface ribbons that  correspond to integral points between $b=(2 - \chi(S^*)) -2g$ and $b=(2 - \chi(S^*)) -g$, above $b=1$, represented by the region shaded in light blue in (C). 
For example, for $n=2$, since $b=3$, the point $(g,b)=(1,1) $ is on the line $b=(2 - \chi(S^*)) -2g$
but below the line $b=(2 - \chi(S^*)) -g$, and we do not know if there exists a surface ribbon realizing this point.
}
\end{example}

For the $y$-intercept  $2 - \chi(S^*)$ in Figure~\ref{gb}, we have the following bound.

\begin{proposition}\label{prop:rank}
	Let $S$ be a surface ribbon with $\nu$ connected components. Then we have 
	$$
		\mu(S) \leq 2\nu -\chi(S^*) 
	$$
	where $S^*$ is the surface ribbon with maximum Euler characteristic satisfying $h(S) \cong h(S^*)$.
\end{proposition}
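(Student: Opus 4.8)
The plan is to reduce the inequality to a per-component estimate for $S^*$, and then to bound the number of generators of the fundamental heap of a connected surface ribbon by its Euler characteristic. First, since $h(S)$ and $h(S^*)$ are group heaps, the isomorphism $h(S)\cong h(S^*)$ produces an isomorphism of the associated groups (the correspondence between heaps and groups recalled in Section~\ref{sec:Pre}), so $\mu(S)=\mu(h(S))=\mu(h(S^*))=\mu(S^*)$; it therefore suffices to bound $\mu(S^*)$.

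Next I would write $S^*=S_1^*\sqcup\cdots\sqcup S_\nu^*$ as a disjoint union of connected surface ribbons. By Lemma~\ref{lem:disjoint}, $h(S^*)\cong h(S_1^*)*\cdots*h(S_\nu^*)$, and the additivity of $\mu$ under free products (Grushko's theorem, as already invoked in the excerpt) gives $\mu(S^*)=\sum_{i=1}^\nu\mu\bigl(h(S_i^*)\bigr)$. Hence the proposition follows from the single-component estimate
$$
\mu\bigl(h(T)\bigr)\ \le\ 2-\chi(T)\qquad\text{for every connected surface ribbon }T,
$$
because summing it over the components $S_i^*$ yields $\mu(S)=\mu(S^*)\le\sum_i\bigl(2-\chi(S_i^*)\bigr)=2\nu-\chi(S^*)$.

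To establish the single-component estimate, I would put $T$ into the standard form of Figure~\ref{base} from the proof of Theorem~\ref{thm:heap}: a connected surface ribbon of genus $g=g(T)$ with $b=b(T)$ boundary curves is assembled from a base disk together with $g$ crossed handle pairs and $b-1$ trivial handles, and so carries $2g+(b-1)$ handle ribbons. Re-running the elimination argument of Theorem~\ref{thm:heap} on this diagram presents $h(T)$ on a single free generator together with the ribbon terms of the handle ribbons; the ribbon words $\Theta,\Omega,\Lambda,\dots$ attached to the connecting ("dotted") arcs are solved for by the relations read off at the base of Figure~\ref{base}, hence contribute no further generators. This exhibits a generating set of $h(T)$ of cardinality at most $1+2g+(b-1)=2g+b=2-\chi(T)$, which is the estimate. (For $T=B^2$ it reads $\mu(\Z)\le 1$, consistent with the convention $h(B^2)=\Z$.)

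The step I expect to be the main obstacle is exactly this single-component estimate: one must verify carefully that, after the eliminations in the standard form, the connecting "dotted" ribbons genuinely drop out as generators — i.e. that the relations occurring at the base always permit solving for them — so that the count is $2g+b$ and not something larger. This is a refinement of the bookkeeping already carried out in the proof of Theorem~\ref{thm:heap}; once it is in hand, the remainder is the formal free-product computation above.
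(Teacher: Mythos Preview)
Your argument is essentially the same as the paper's: reduce to the connected case via Grushko and Lemma~\ref{lem:disjoint}, then use the standard form from the proof of Theorem~\ref{thm:heap} to exhibit a generating set of size $1+2g+(b-1)=2-\chi$. Your version is in fact slightly cleaner, since you apply the per-component bound directly to the pieces of $S^*$ rather than to $S$ and then transferring; the paper's final sentence (``From $\chi(S^*)\geq\chi(S)$ and the previous inequality for $\mu(S)$ we obtain $\mu(S)\leq 2-\chi(S^*)$'') is phrased in a way that does not literally follow, whereas your formulation avoids this. Both your proof and the paper's tacitly assume that $S^*$ has $\nu$ connected components, which is not argued; your worry about the ``dotted'' ribbon words is well placed but resolvable, since each ribbon term along a handle is a conjugate of the ribbon term at its foot by over-arc ribbon terms, so the foot terms already generate.
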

\begin{proof}
	We assume that $S$ is connected, since an iteration of this case 
	gives the result in general. From the proof of Theorem~\ref{thm:heap}, after putting $S$ in standard form, we have a presentation of $h(S)$ with a copy of the free group on one generator, and the reduced heap, $h(S)=F_1 * \hat{h}(S)$. The latter has two generators corresponding to pairs of ribbon terms for each crossed handle pair, and one generator corresponding to a ribbon term for each trivial handle. Therefore, we have $2g + b = 2- \chi(S)$ generators. Consequently we find that $\mu(S) \leq 2-\chi(S)$. Since $S^*$ has $h(S^*) \cong h(S)$ by assumption, 	 it follows that  $\mu(S^*) = \mu(S)$. From $\chi(S^*) \geq \chi(S)$ and the previous inequality for $\mu(S)$ we obtain $\mu(S) \leq 2- \chi(S^*)$, and 
		 the proof is complete.
	\end{proof}


\subsection{A relation to the Wirtinger presentation}

The fundamental group of the complement of a spatial graph can be given by a presentation  from its oriented diagram in a manner similar to the Wirtinger presentation of knot groups, as outlined in \cite{MSW}.

\begin{figure}[htb]
\begin{center}
\includegraphics[width=2in]{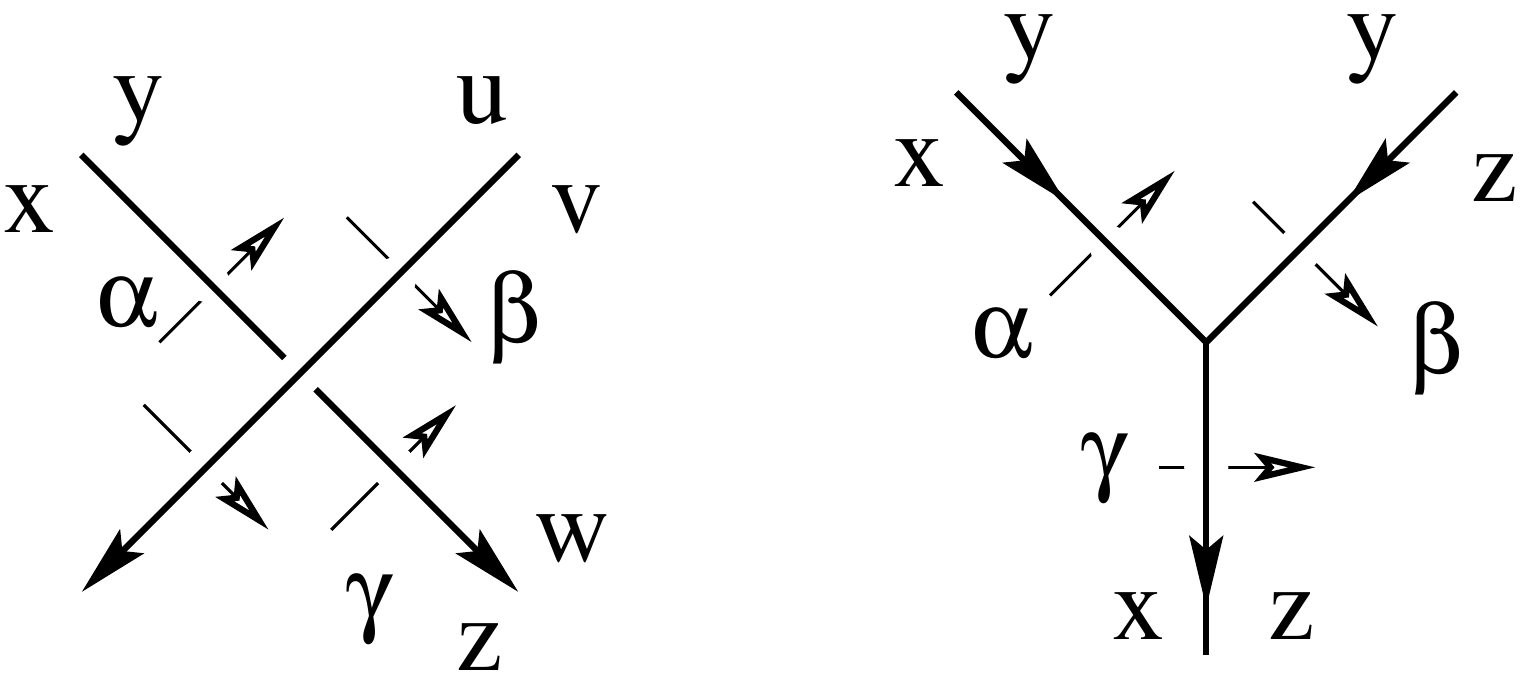}
\end{center}
\caption{Wirtinger relations at a crossing (left) and a vertex (right)}
\label{vertori}
\end{figure}

\begin{theorem}
For a surface ribbon $S$, let $\hat{h}(S)$ be the reduced fundamental heap,  $h(S)=F_{\nu}* \hat{h}(S)$.
Then there exists an epimorphism   $\lambda: \pi_1(S^3 \setminus S) \rightarrow \hat{h}(S)$.
\end{theorem}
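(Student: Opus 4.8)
The strategy is to compare two presentations attached to a diagram $D$ of $S$: the Wirtinger-type presentation of $\pi_1(S^3\setminus S)$ from \cite{MSW}, and the presentation of $h(S)$ from Definition~\ref{def:fund}, after first reducing the latter to the free-factor form $h(S)=F_\nu*\hat h(S)$ as in the proof of Theorem~\ref{thm:heap}. The guiding observation is that the Wirtinger presentation of a spatial trivalent graph assigns a generator to each \emph{arc} of the underlying (single-stranded) spine diagram, with a relation at each crossing and a relation at each trivalent vertex, while the fundamental heap assigns a generator to each \emph{doubled} arc of the ribbon, with the crossing relations $z=xu^{-1}v$, $w=yu^{-1}v$ and no vertex relation. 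So the natural map will send a Wirtinger generator to a \emph{ribbon term}: if $m$ is the meridian-type generator of a single spine arc carrying ribbon labels $x,y$ on its two sides, set $\lambda(m) = x^{-1}y$ (or $y^{-1}x$, fixing a convention consistent with the co-orientation used in \cite{MSW} and the crossing conventions of Figure~\ref{buildingblocks}).

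\textbf{Key steps.} First I would recall explicitly the Wirtinger presentation of $\pi_1(S^3\setminus S)$: generators $m_a$ for arcs $a$ of the spine diagram of $D$, a conjugation relation at each crossing of the form $m_c = m_b^{\pm 1} m_a m_b^{\mp 1}$, and at each trivalent vertex a relation expressing the product of (suitably oriented) incident meridians as trivial, e.g.\ $m_{a}m_{b}m_{c}=1$ up to orientation signs (Figure~\ref{vertori}). Second, I would define $\lambda$ on generators by $m_a \mapsto$ the ribbon term of the doubled arc corresponding to $a$, landing in $\hat h(S)$ viewed inside $h(S)$ via Theorem~\ref{thm:heap}; here one uses that in the reduced presentation built in the proof of Theorem~\ref{thm:heap} every arc label equals $x\Phi$ for a ribbon word $\Phi$, so ratios of the two labels on a single ribbon are genuinely ribbon words, hence elements of $\hat h(S)=\langle\mathcal B\mid\mathcal R\rangle$. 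Third, I would check that each Wirtinger relator maps to a relator (or a consequence of the relators) of $\hat h(S)$: the crossing relation $z=xu^{-1}v$, $w=yu^{-1}v$ of $h(D)$ forces the ribbon term $x^{-1}z = x^{-1}(xu^{-1}v)$ of the underpassing strand to be $u^{-1}v$ composed with conjugation data, which matches the Wirtinger conjugation relation after applying $\lambda$; and the vertex relation maps to something trivial in $\hat h(S)$ precisely because no relation is imposed at vertices in $h(D)$ while the ribbon labels on the three arcs at a fat vertex all coincide (Figure~\ref{assoc}), making the corresponding product of ribbon terms collapse. This establishes that $\lambda$ is a well-defined homomorphism $\pi_1(S^3\setminus S)\to\hat h(S)$. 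Fourth, for surjectivity I would argue that $\hat h(S)$ is generated by ribbon terms (this is exactly the content of the presentation $\langle x,\mathcal B\mid\mathcal R\rangle$ produced in Theorem~\ref{thm:heap}, modulo the free factor $F_\nu$ which is killed), and each generating ribbon term $x^{-1}y$ for a ribbon segment is hit by the meridian $m_a$ of the corresponding spine arc; since these generate $\hat h(S)$, $\lambda$ is onto.

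\textbf{Main obstacle.} The delicate point is bookkeeping of orientations and the precise form of the vertex relations. In the fundamental heap a trivalent fat vertex imposes no relation and all three doubled arcs share a single label, whereas in the Wirtinger presentation a vertex does carry a relation among three meridians; I must verify that applying $\lambda$ to that Wirtinger vertex relation yields an identity that already holds in $\hat h(S)$ (rather than a new relation that would need to be added), and symmetrically that no heap relator fails to be accounted for — i.e.\ that I really get a homomorphism and not merely a partial correspondence. This requires pinning down, once and for all, a coherent choice: a co-orientation of each spine arc, the induced orientation of the two boundary curves of the ribbon (which are antiparallel, as emphasized in the introduction), and the sign conventions at positive versus negative crossings. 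Once those conventions are fixed consistently with \cite{Matsu} and \cite{MSW}, the verification at crossings is the routine computation already implicit in the proof of Lemma~\ref{lem:fund}, and the vertex check reduces to the observation that a product of ribbon terms around a vertex telescopes to the identity because the three labels agree. I do not expect to need injectivity of $\lambda$, and indeed the statement only claims an epimorphism; the non-injectivity is consistent with the fact that $h(S)$ records framing/ribbon data that the complement $S^3\setminus S$ does not see.
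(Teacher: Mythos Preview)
Your approach is correct and essentially identical to the paper's: define $\lambda$ by sending each Wirtinger meridian to the ribbon term of the corresponding doubled arc, verify the crossing and vertex relations, and invoke that $\hat h(S)$ is generated by ribbon terms for surjectivity. Two small corrections to your sketch: at a crossing the relevant ribbon term of the underpassing strand is $z^{-1}w$ (not $x^{-1}z$), and the verification is $z^{-1}w=(xu^{-1}v)^{-1}(yu^{-1}v)=(u^{-1}v)^{-1}(x^{-1}y)(u^{-1}v)$, i.e.\ exactly the Wirtinger conjugation $\gamma=\beta^{-1}\alpha\beta$; at a trivalent vertex the three arc labels do \emph{not} all coincide but rather agree pairwise on adjacent ribbon edges (so the three ribbon terms are $x^{-1}y$, $y^{-1}z$, $x^{-1}z$), which is precisely why the vertex relation $\gamma=\alpha\beta$ telescopes to an identity.
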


\begin{proof}
Let 
$\Xi$
be the core trivalent graph of $S$. We specify arbitrary orientations on edges of $\Xi$, so that there are four possibilities of the orientations at each vertex.
The case of two-in, one-out is depicted in the right of Figure~\ref{vertori}.
In the figures, Wirtinger generators are depicted by short arcs behind oriented edges of 
$\Xi$. 

At a positive crossing depicted in the left of Figure~\ref{vertori}, 
we set the ribbon terms 
$\alpha=x^{-1}y$, $\beta=u^{-1}v$ and $\gamma=z^{-1}w$, where $x,y, u,v, z,w$ are generators of $h(S)$ assigned on the parallel boundary arcs of the surface ribbon $S$. 
The assignment that defines $\lambda$ is such that the Wirtinger generator of the arc corresponding to the arc
labeled by $(x,y)$ is assigned $\alpha$, and similar for the other arcs.
This is the same assignment defined in \cite{SZframedlinks} for framed links, in which 
the Wirtinger relations are verified using the diagram in the left of Figure~\ref{vertori}.
Indeed, since $z= x u^{-1} v $ and $w=y u^{-1} v$, one computes 
$$\gamma = z^{-1} w = ( x u^{-1} v )^{-1} (y u^{-1} v) = (u^{-1} v)^{-1} (x^{-1} y) ( u^{-1} v)= 
\beta^{-1} \alpha \beta, $$
which is a Wirtinger relation.
Negative crossings are checked similarly. The group $\hat{h}(S)$ is generated by 
ribbon terms of the form 
 $\alpha=x^{-1}y$, hence the image of $\lambda$ is in $\hat{h}(S)$.

It remains to show that the relation holds at trivalent vertices. 
For a vertex depicted in the right of Figure~\ref{vertori}, the relation in $\pi_1(S^3 \setminus S)$
is $\gamma=\alpha \beta$, and this holds for 
$\alpha=x^{-1}y$, $\beta=y^{-1} z$ and $\gamma=x^{-1} z$ as desired.
The other three types of orientations at vertices can be similarly checked.
\end{proof}

\subsection{Effect under stabilization}

In this section we describe a stabilization of 
surface ribbons and provide the effect of a stabilization on the fundamental heap.
It is known (e.g.~\cite{BFK}) that two Seifert surfaces of a link are related by a sequence of (de/)stabilizations and isotopy, where a stabilization means a 1-handle addition.

\begin{figure}[htb]
\begin{center}
\includegraphics[width=4in]{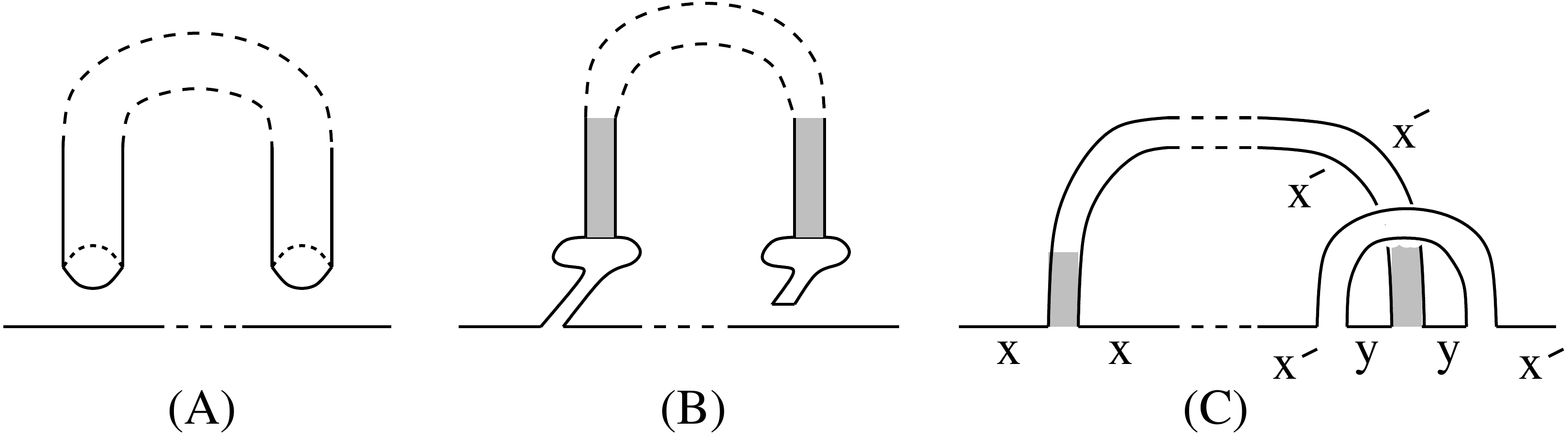}
\end{center}
\caption{Stabilization}
\label{stab}
\end{figure}

A 1-handle addition is depicted in Figure~\ref{stab} (A). In (B), a thin portion of the boundary is pushed towards the left foot of the handle, and wraps around the handle to obtain a thin ribbon that was a part of the handle. The boundary is pushed further along the handle to the right foot.
The pushed boundary curve stops short of reaching the boundary near the right foot of the handle as depicted.
By straightening and flipping, we obtain the surface in (C). 
In summary, 
an addition of  a pair of a long and a short trivial ribbons
as in (C) is regarded as a stabilization of a surface ribbon.

\begin{figure}[htb]
	\begin{center}
		\includegraphics[width=2.5in]{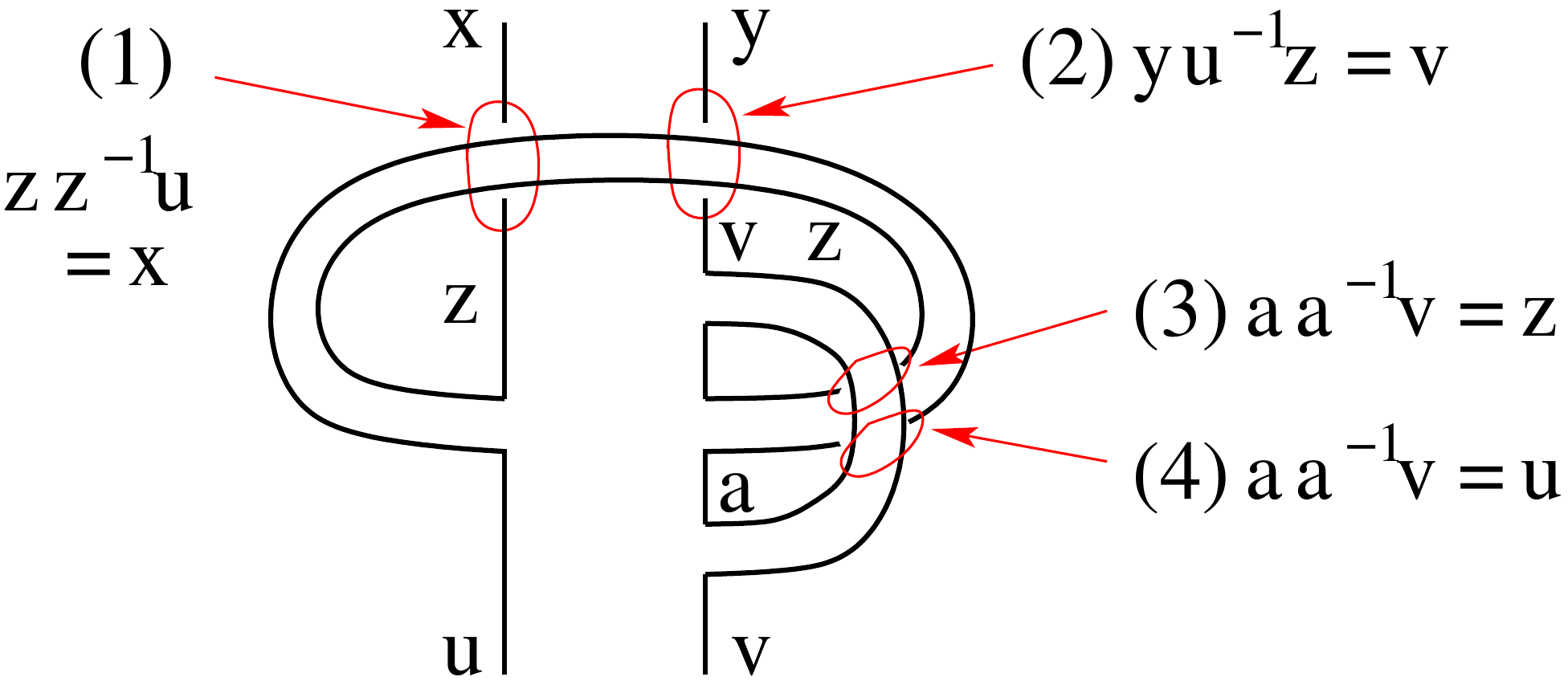}
	\end{center}
	\caption{Labels for a stabilization}
	\label{stablabel}
\end{figure}

\begin{proposition}\label{prop:stab}
	For any surface ribbon $S$, there exists another $S'$ obtained from $S$ by a sequence of
	stabilizations such that $h(S')$ is a free group. 
\end{proposition}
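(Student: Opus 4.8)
The plan is to reduce to connected surface ribbons and then use stabilizations to realize crossing changes on a diagram of $S$, one crossing at a time, until its fundamental heap becomes visibly free. By Lemma~\ref{lem:disjoint} and Theorem~\ref{thm:heap} it suffices to treat each connected component separately — a free product of free groups is free, and stabilizations may be performed componentwise — and by Theorem~\ref{thm:heap} it is enough to arrange that the reduced heap $\hat h(S')$ be free. So assume $S$ is connected, and, using the moves of Figure~\ref{moves} (in particular the IH moves), isotope $S$ to the standard form of Figure~\ref{base}: a disk with handles attached along its boundary, the cores of the handles forming a possibly knotted and linked system of arcs whose bands may carry extra twists (``loops'' as in Figure~\ref{loop}).

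The technical core is the following claim. \emph{Let $D$ be a diagram of $S$ and let $c$ be a crossing of $D$. Then there is a single stabilization producing a surface ribbon $S'$ with a diagram $D'$ that differs from $D$ only by switching the over/under information at $c$ and by inserting, disjoint from the rest of the picture, a trivial (unknotted, unlinked) handle.} To prove it, realize the stabilization as in Figure~\ref{stab}, routing the long trivial ribbon of Figure~\ref{stab} (C) so that near $c$ it runs alongside the underpassing arc and arches over the overpassing strand; the handle slide moves IY, YI, CL of Figure~\ref{moves} then let one slide the underpassing arc at $c$ across the new band so that it emerges on the opposite side of the overpassing strand, after which what remains of the new band is isotoped into a standard position as a trivial handle.

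Granting the claim, the proposition follows by bookkeeping. Any spatial graph is carried to a planar one by finitely many crossing changes, and a band carrying $m \ge 2$ loops is simplified by a crossing change followed by an RII move (switching one loop produces a loop of the opposite sign, which then cancels an adjacent one, leaving $m-2$ loops). Applying the claim one crossing at a time — and isotoping the newly produced trivial handle out of the way before treating the next crossing — we obtain, after finitely many stabilizations, a surface ribbon $S'$ whose diagram $D'$ has no crossings other than self-crossings of single loops of bands. In the presentation $\langle \mathcal{A} \mid \mathcal{T} \rangle$ of $h(D')$ from Definition~\ref{def:fund}, every relation of $\mathcal{T}$ is then either a substitution $z = w$ with $w$ a word in the remaining generators (coming from a crossing of two distinct arcs, and usable to eliminate $z$), or a relation $\alpha = 1$ coming from a single loop, exactly as computed in Example~\ref{ex:loops} with $m = 1$. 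Performing the corresponding Tietze transformations presents $h(S') = h(D')$ with no relations, so $h(S')$ is free.

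The main obstacle is the claim of the second paragraph: one must check carefully, using the pictures of Figures~\ref{stab} and~\ref{moves}, that a stabilization realizes exactly a crossing change together with a trivial handle and nothing more — in particular that the leftover band genuinely unknots and untwists and that no new essential crossings or framings are introduced — and that this handle can afterwards be moved aside without disturbing the part of the diagram still to be simplified. Once this is established the remainder is routine, the finiteness of unknotting (for the system of handle cores and for the loops) bounding the number of stabilizations required.
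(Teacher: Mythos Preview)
Your approach is quite different from the paper's, and the step you yourself flag as ``the main obstacle'' is a genuine gap.

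The paper never tries to simplify the diagram topologically. Its key observation, read off directly from the local picture of a stabilization (Figure~\ref{stablabel}), is algebraic: performing a stabilization along a ribbon whose two boundary arcs carry generators $x$ and $y$ adds the single relation $x=y$ to the presentation of the heap, together with one new free generator. Given this, one performs three such stabilizations near each ribbon crossing (Figure~\ref{stabcrossing}); the relations so introduced equate all arc labels meeting at that crossing, whence the crossing relations become tautological. Doing this at every crossing collapses all the original arc generators to a single one, the only other generators present are the free ones contributed by the stabilizations, and $h(S')$ is free. No crossing is ever switched.

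Your route instead tries to use a stabilization to realize a \emph{crossing change}, leaving behind a trivial handle, and then to unknot and untwist the diagram. The specific construction you sketch does not produce a trivial leftover handle. If you attach a bypass handle $H$ to the under-band $U$ arching over the over-band $O$, the resulting embedded surface has two ribbon descriptions of the local picture: ``$U$ under $O$ with handle $H$ over $O$'' versus ``band over $O$ with handle under $O$''. Passing from one to the other is only a relabelling of which arc of the local theta-graph is called the handle; it is not an isotopy carrying the handle away from $O$. Concretely, the cycle in $U\cup H$ formed by the under-arc together with the bridge has linking number~$\pm 1$ with the core of $O$, whereas no cycle in ``band over $O$ plus a genuinely trivial handle'' links $O$ at all; so these local pieces are not isotopic rel their boundary. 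Thus each ``crossing change'' costs you a new nontrivially linked handle and the inductive simplification does not terminate. Perhaps some cleverer stabilization achieves what you want, but you would have to exhibit it and prove it; the IY/YI/CL sketch does not. The paper's algebraic shortcut sidesteps the whole issue.
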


\begin{proof}
	We may assume that a given diagram of $S$ is connected. 
	In Figure~\ref{stablabel}, generators and relations for a stabilization at a ribbon is depicted.
	Each arc receives a generator as indicated, and 4 ribbon crossings as indicated by red circles 
	give rise to relations (1) through (4).
	From (1), (3) and (4)  we obtain that $x=u=v=z$, and from (2) we obtain these generators are equal to $y$. 
	When a stabilization is performed in this way to a vertical ribbon whose boundary curves receive generators $x$ and $y$,
	the effect of the stabilization is an additional relation $x=y$ and an additional free generator $a$.

\begin{figure}[htb]
\begin{center}
\includegraphics[width=2in]{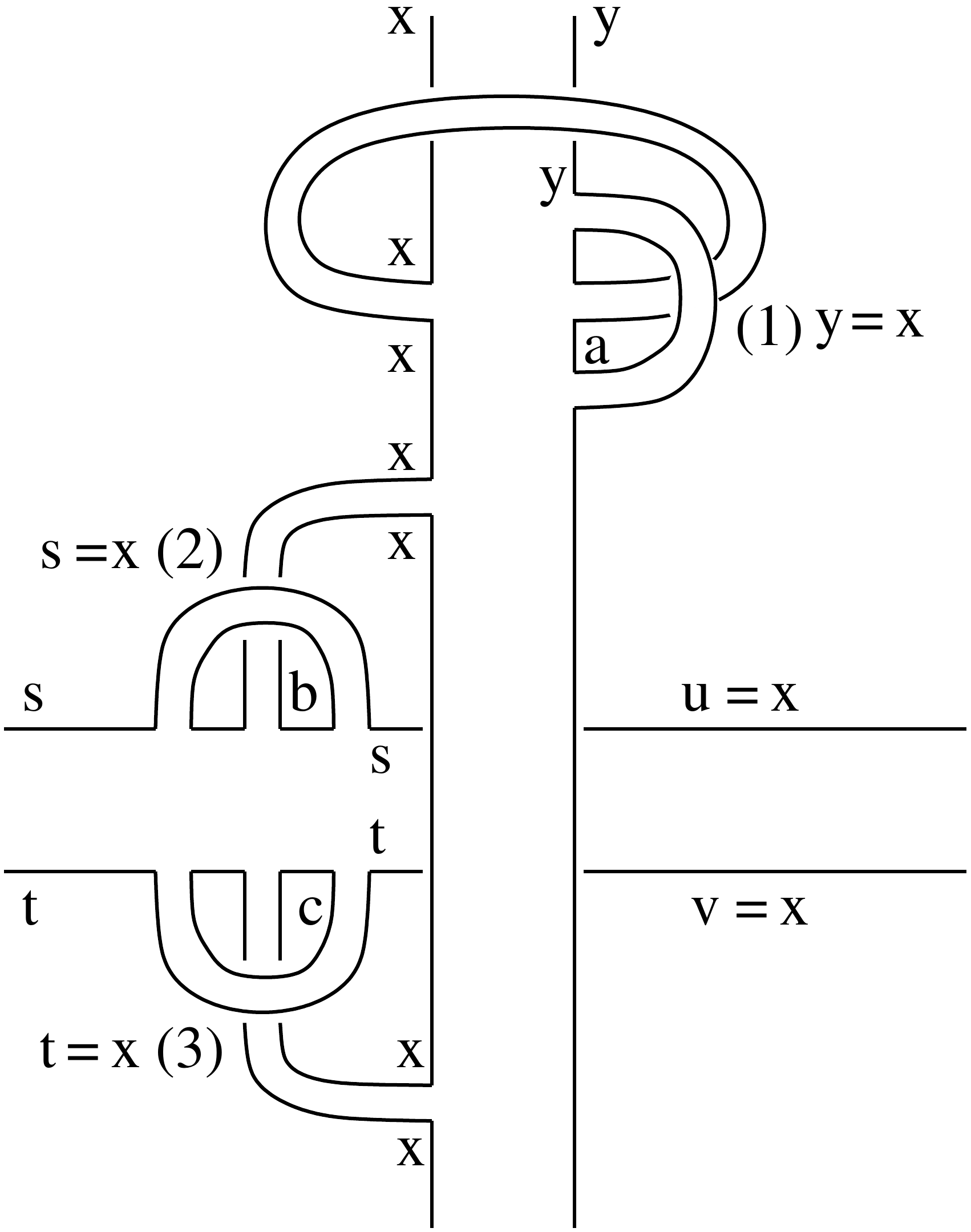}
\end{center}
\caption{Stabilization at  a crossing}
\label{stabcrossing}
\end{figure}

	When two other similar operations are performed at a crossing 
	 as depicted 	
 in
	Figure~\ref{stabcrossing}, the effect is that 
	corresponding to stabilizations labeled (1), (2), and (3), relations $y=x$, $s=x$ and $t=x$ are introduced as depicted,
	and the original relations for $u$ and $v$ imply $u=x$ and $v=x$ as well. 
	Three free generators are introduced, as indicated by $a$, $b$ and $c$ corresponding to small loops
	in the figure at (1), (2) and (3), respectively.
	Hence the effect of this process at this crossing is that all original generators assigned to arcs are equated, and three new free generators are introduced.
	
	By performing this procedure at every ribbon crossing of the diagram, we obtain $S'$ by stabilizations 
	such that $h(S')$ is a free group.
\end{proof}

\subsection{Realization problem for the fundamental heap of  surface ribbons}

Recall that, as observed above, fundamental heaps of surface ribbons are finitely presented by definition. From Theorem~\ref{thm:heap} it follows that for any  surface ribbon $S$, $h(S)$ contains a free factor. We show below that any finitely presented group can be realized as a fundamental heap after adding some free factor.
 
 \begin{figure}[htb]
	\begin{center}
		\includegraphics[width=3in]{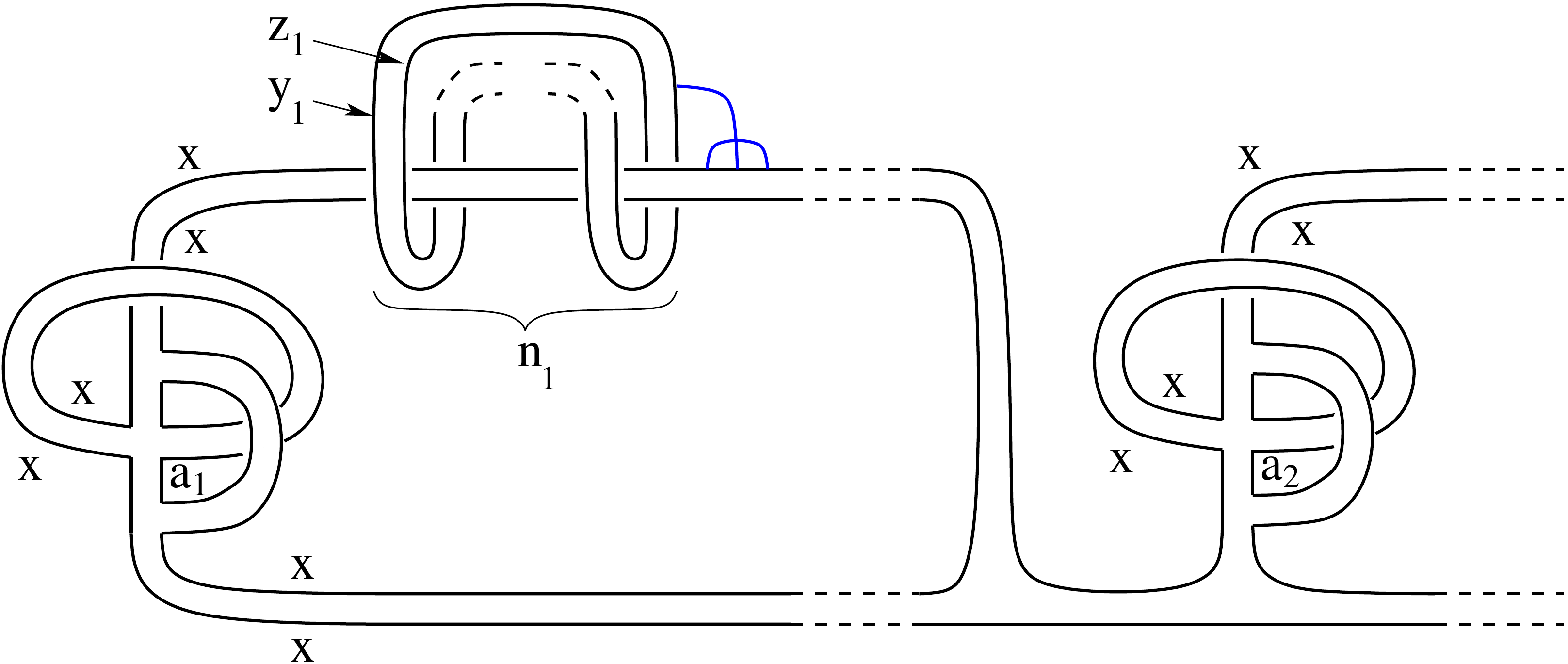}
	\end{center}
	\caption{Constructing relators over base surface}
	\label{realization1}
\end{figure}

\begin{theorem}\label{thm:realization}
	Let $G$ be a finitely presented group. 
	Then there exists a surface ribbon $S$ such that $ h(S) \cong F_k* G$ 	for some positive integer $k$. 
\end{theorem}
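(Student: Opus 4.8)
The plan is to construct $S$ explicitly from a finite presentation $G = \langle g_1, \ldots, g_n \mid r_1, \ldots, r_m \rangle$, building on the computational techniques developed in the proof of Theorem~\ref{thm:heap} and in Examples~\ref{ex:loopband}--\ref{ex:torussurface}. Recall that after putting a connected surface ribbon in standard form over a base (Figure~\ref{base}), the fundamental heap admits a presentation $\langle x, \mathcal B \mid \mathcal R \rangle$ where $x$ is a single free generator, $\mathcal B$ is the set of ribbon terms attached to the base, and $\mathcal R$ records the relations read off along the boundary curve. The key observation is that each ribbon attached to the base contributes one ribbon-term generator, and each closed boundary loop passing over a prescribed sequence of ribbon feet contributes a relator that is the corresponding word in those ribbon terms. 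So the strategy is to engineer a base surface with $n$ ribbons (yielding generators $\alpha_1, \ldots, \alpha_n$ identified with $g_1, \ldots, g_n$) and $m$ additional boundary passes, each threaded through the feet of the ribbons so as to spell out the relator word $r_j$.

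First I would make precise, via the argument of Theorem~\ref{thm:heap}, the dictionary: start from the base of Figure~\ref{base}, attach $n$ handles so that their ribbon terms become free generators $\alpha_1,\dots,\alpha_n$ of $\hat h$; then, for each relator $r_j = \alpha_{i_1}^{\pm 1}\cdots \alpha_{i_{k_j}}^{\pm 1}$, route a portion of a boundary curve (as in the dotted arcs of Figure~\ref{base} and the labeled passes of Figure~\ref{realization1}) so that, tracing it and collecting the ribbon terms encountered at successive crossings exactly as in the proof of Theorem~\ref{thm:heap}, one arrives back at the starting label having accumulated the word $r_j$; this produces the relation $r_j = 1$ in $\hat h(S)$. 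The sign $\pm$ is handled by choosing which of the two antiparallel boundary arcs of a ribbon the curve runs alongside, since the two boundary arcs of a ribbon $R$ contribute the mutually inverse ribbon terms $x^{-1}y$ and $y^{-1}x$. One must check that no unwanted extra relators are created — i.e. that the only relations among the $\alpha_i$ besides the designed $r_j$'s come from the unavoidable ``$x\Phi = x$'' type relations along the outermost boundary — and that these reduce, after the eliminations of Theorem~\ref{thm:heap}, to trivial or already-present relators. The twisted-band moves of Lemma~\ref{lem:adloop} can be used freely to adjust genus/boundary count without changing $h(S)$, which gives flexibility in realizing the routing.

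Then $h(S) = \langle x \rangle * \hat h(S) = F_1 * \langle \alpha_1,\dots,\alpha_n \mid r_1,\dots,r_m, (\text{spurious trivial relators})\rangle$, and after discarding the trivial relators $\hat h(S) \cong G$, so $h(S) \cong F_1 * G$; absorbing any additional free ribbon terms that were forced by the construction (for instance, to close up each relator loop one may need an auxiliary trivial band as in Example~\ref{ex:loops}, contributing a free $\Z$ factor) gives $h(S) \cong F_k * G$ for the appropriate $k \geq 1$. Alternatively, one can phrase the realization as a boundary connected sum: realize $G$ itself as a reduced fundamental heap of one piece and append trivial bands (Example~\ref{ex:loopband}) and invoke Proposition~\ref{pro:connected}.

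The main obstacle I expect is the combinatorial verification that the routed boundary curves produce \emph{exactly} the relators $r_j$ and nothing more — in particular controlling the interaction between distinct relator loops where they must cross each other and cross the handle feet, and confirming that these crossings, governed by the heap operation $T(x,y,z) = xy^{-1}z$, contribute only conjugation-by-ribbon-term effects that can be normalized away (using the same heap identities $T(T(u,x,y),y,z) = T(u,x,z)$ etc. invoked in Lemma~\ref{lem:fund}), rather than genuinely new relations. Making the figure-based routing argument rigorous — essentially showing the designed diagram's Tietze-reduced presentation is precisely $\langle x\rangle * \langle g_i \mid r_j\rangle$ up to free factors — is the technical heart of the proof; everything else is bookkeeping with the tools already established.
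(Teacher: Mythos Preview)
Your high-level strategy is in the right spirit, and you correctly identify the crux of the matter: ensuring that the designed crossings produce \emph{exactly} the relators $r_j$ and nothing else. But your proposal does not actually resolve this obstacle --- you flag it as ``the technical heart of the proof'' and leave it open. That is a genuine gap, not bookkeeping. When distinct relator-routes and handle feet cross each other, the heap rule $z = xu^{-1}v$ does \emph{not} in general produce only conjugation effects that can be ``normalized away''; a ribbon whose two boundary arcs carry distinct labels $(u,v)$ changes the label of anything passing under it by the nontrivial ribbon term $u^{-1}v$, and these contributions accumulate into genuine extra words in the presentation. Your appeal to the identities used in Lemma~\ref{lem:fund} does not help here, since those identities govern moves, not the cancellation of unwanted crossing contributions.

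The paper's proof supplies the missing device: \emph{stabilization} (Proposition~\ref{prop:stab}). Stabilizing a ribbon forces its two boundary arcs to carry the \emph{same} label, at the cost of one extra free generator. A monochromatic ribbon is invisible to anything passing under it (since $xu^{-1}u = x$), so spurious relators simply do not arise. Concretely, the paper builds one stabilized base handle per relator $R_j$; for each letter $\alpha_{i_t}^{n_t}$ in $R_j$ it links an annular ribbon around that handle $|n_t|$ times (with sign governing crossing type); because the base handle is monochromatic, the annulus picks up no relation while the base handle accumulates the desired word $R_j$. Annuli corresponding to the same generator are then connected by overpassing (hence again harmless) monochromatic bands, and each annulus is attached to the base by a further stabilization. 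All the extra free generators produced by these stabilizations are absorbed into $F_k$.

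Note also a structural difference: you propose one handle per \emph{generator} and a routed boundary loop per relator, whereas the paper uses one base handle per \emph{relator} with linking annuli playing the role of generators. Either architecture could in principle work, but the paper's succeeds precisely because stabilization kills the interaction problem you left unresolved.
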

\begin{proof}
	First observe that if $G$ factors as free product of subgroups $G = G_1* G_2* \cdots *G_q$ then, using Lemma~\ref{lem:disjoint}, we can reduce the problem to finding ribbon surfaces whose reduced heap is isomorphic to the group heap of each factor $G_i$, since their disjoint union 
	(split sum)
	would realize $G$. 	We may assume that $G$ is irreducible with respect to free product factorization. Let $G = \langle \, \alpha_1, \ldots , \alpha_n \mid  R_1, \ldots , R_m \, \rangle$. We construct the required surface $S$ in various steps, at each of which we consider the effect on the reduced heap. We start by introducing a surface ribbon $S$ consisting of $m$ single trivial handles concatenated horizontally. We  realize each relator of $G$ on the handles of  this surface, which we will refer to as ``base surface''. Close to the left foot of each handle we apply the stabilization procedure as in Figure~\ref{stablabel}. As a consequence we have two free generators, for each stabilization, indicated by the letters $x$ and $a_i$ as in Figure~\ref{stablabel}, for $h(S)$. This is depicted in Figure~\ref{realization1} for the first two handles. Also, the labels of the handle of $S$ are $x$ for both edges of the handle, outside the small region where 
	 $a_i$
	is located, i.e. at the left foot of each handle of the base surface. Let us now consider the first relator $R_1$ in the presentation of $G$. Let $w(\alpha_1,\ldots , \alpha_n) = \alpha_{i_1}^{n_1}\cdots \alpha_{i_r}^{n_r}$ be the word corresponding to $R_1$. 
	We may assume that $w(\alpha_1,\ldots , \alpha_n) $ is reduced and, 	we suppose that $n_1$ is positive. 
			 We introduce an
	annular surface ribbon  component 
	wrapping 	$n_1$ times around the handle of $S$, as shown on the upper part of the first handle of Figure~\ref{realization1}. 
		For a negative  $n_1$, we take negative twists for the wrapping ribbon.
Let us denote by $y_1$ and $z_1$ 
the outer and inner arcs, respectively, of the annular ribbon 
that has been introduced. Then, depending on the 
positive or negative 
crossing, respectively, 
 $x$ changes to $x(y_1^{-1}z_1)^{n_1}$ or $x(z_1^{-1}y_1)^{n_1} = x(y_1^{-1}z_1)^{-n_1}$. We assume that it is the first case, without loss of generality.
We observe that since the handle of $S$ has undergone stabilization, as in the proof of Proposition~\ref{prop:stab}, the two arcs delimiting the handle of $S$ have the same color $x$ and do not modify the colors $y_1$ and $y_2$ after overpassing the annular ribbon that has been introduced, so no relator is derived on this ribbon.  This is also crucial in the fact that the color $x$ changes to $x(y_1^{-1}z_1)^{n_1}$, since if the base handle were not monochromatic, 
  its colors would have obstructed us from obtaining a multiplication by a simple power $(y_1^{-1}z_1)^{n_1}$. We do not perfom any chagnes on the other handles of the base surface. Let us denote the surface just obtained as $S_1$.  Proceeding as in the proof of Theorem~\ref{thm:heap}, we obtain a presentation of $h(S_1)$ with $m+2$ generators $x,a_i, y_1, \alpha_1$, where we have set $\beta_1 := y_1^{-1} z_1$, 
  and a single relator $\beta_1^{n_1}$   because the band after wrapping ribbon connects to the base labeled $x$. 
  We repeat the same construction of adding another annular ribbons    that links the handle of $S$, but does not link other parts. 
   Let us denote by $y_2$
  and $z_2$ the outer and inner arcs, respectively, of the newly introduced handle. 
    We assume, as before, that the crossing with the handle of $S$ introduces a new color $\beta_2:= y_2^{-1}z_2$. We let this handle wrap around the base handle $n_2$ times and, moreover, we use the same convention on signs as before. 
  We connect these annular ribbons to the base handle labeled $x$ by stabilization, 
  as indicated by a small blue arcs at the right of the annular ribbon in Figure~\ref{realization1}.   
  This addition introduces a relation $x=y_i$ and addition of a free generator corresponding to a small loop ($y$ in Figure~\ref{stab} (C)). 
   Let us denote by $S_2$ the surface ribbon obtained via this procedure. The fundamental heap of $S_2$ has $m+3$ free generators 
  (labeled $a_i$'s) and one relator $\beta_1^{n_1}\beta_2^{n_2}$.  
	
\begin{figure}[htb]
	\begin{center}
		\includegraphics[width=3in]{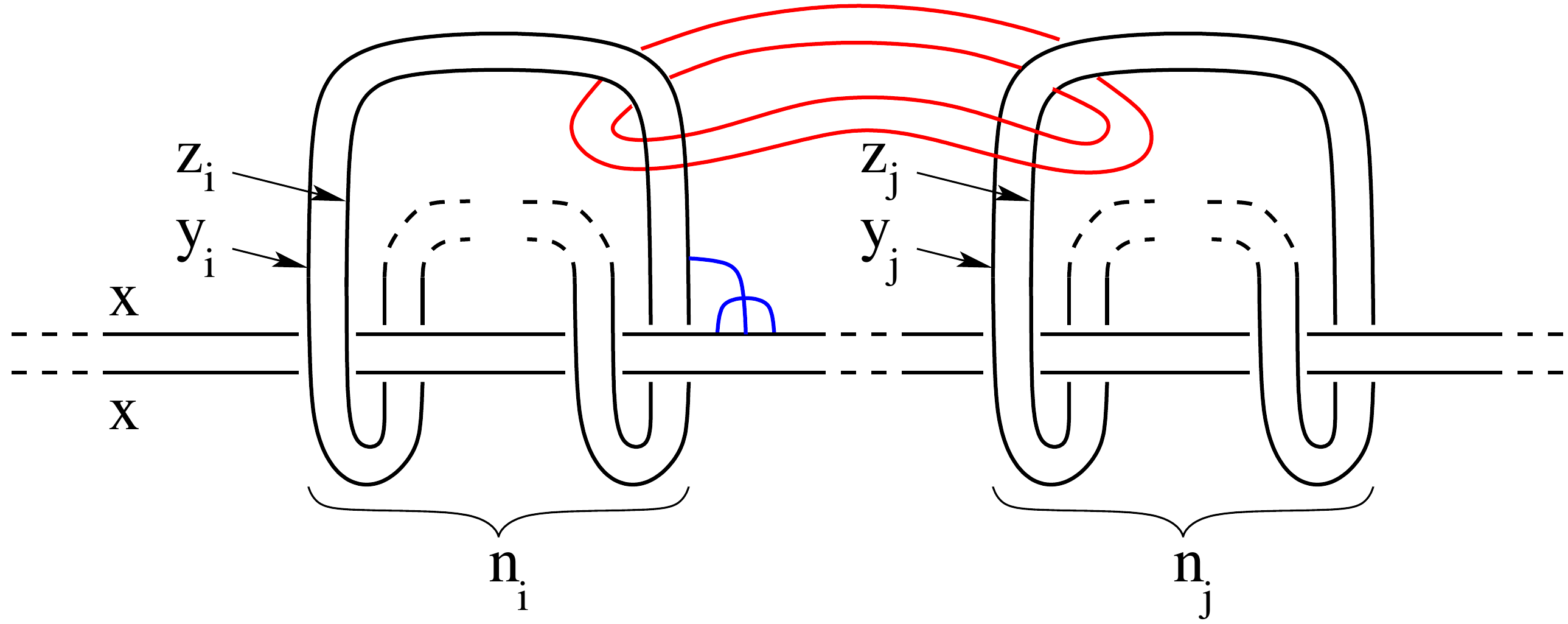}
	\end{center}
	\caption{Identifying generators through stabilization}
	\label{realization2}
\end{figure}

	 We proceed in this way $r$ times to construct a surface ribbon which we call $S_r$, until we have a relator $\beta_1^{n_1} \cdots \beta_r^{n_r}$. The word $w(\alpha_1, \ldots , \alpha_n)$ might have repetitions of $\alpha_i$'s in the product $\alpha_{i_1}^{n_1}\cdots \alpha_{i_r}^{n_r}$. For each repeated pair $\alpha_{i_t} = \alpha_{i_s}$, we 
		 add bands 
	  to connect the respective connected components of $\beta_{i_t}$ and $\beta_{i_s}$, being careful to let the connecting band overpass any other ribbon along the way, in such a way that $\beta_{i_t} = \beta_{i_s}$. This is diagrammatically indicated in Figure~\ref{realization2}. Since the overpassing ribbons are monochromatic, no new relator is introduced, so that the presentation of the fundamental heap $h(S_r)$ has a single relator that is not 
	  changed 
	  except for the effect of having certain letters $\beta_i$ identified, mirroring the same 
	  word 	  of $R_1$.
	
	At the end of this procedure we have found a surface $S_r$ whose fundamental heap is the free 
	product 
	of free factors (determined in number by the stabilizations performed) and a single relator obtained from $w(\alpha_1,\ldots, \alpha_r)$ upon substituting $\beta_1$ to $\alpha_{i_1}$,  $\beta_2$ to $\alpha_{i_2}$ etc. 
	
	Next, we 
	construct a ribbon surface on the second handle of the base surface, in the same way as in the previous step, that introduces a relator $\gamma_1^{m_1}\cdots \gamma_\ell^{m_\ell}$ 
	 that coincides with $R_2$ upon substituting the  $\gamma$'s in place of appropriate $\alpha_i$'s in the reduced word representing the relator $R_2$. To complete this step, we need to add untwisted bands between the handles just introduced and those in the previous step, in order to equate the generators $\beta_i$ and $\gamma_j$ that are substituted to the same generator $\alpha_k$ of $G$. Letting the band that is introduced in the process overpass every ribbon met along the way, at each step, we do not introduce new relators. This is similar to Figure~\ref{realization2}, where instead of introducing a band between surface ribbon over the same base handle, we connect surface ribbons over different base handles.
	 We also note that the other boundary curves do not create additional relations, as they run parallel to the outer boundary curve, and have the same letter $x$ assigned, producing the same relator words $R_i$ from each factor of boundary connected sum. 
	
	After performing the previous steps $m$ times, we obtain a surface ribbon $\tilde S$ whose fundamental heap consists of free products of 
	a number of 
	 free factors, and relators $R'_1, \ldots , R'_m$ where $R_i$ and $R'_i$ correspond to the same words, and differ only by appropriate changes of variables from $\alpha_j$ to $\beta_k$, according to some correspondence determined during the construction of $\tilde S$. 
	
	Finally, there is a mapping from $G$ onto 
	$\bar G := \langle\,  \beta_1, \ldots, \beta_n \mid R'_1, \ldots, R'_m \, \rangle$ determined by the assignment $\alpha_i \mapsto \beta_i$. This is clearly well defined since by construction we have that $R'_j$ is obtained from $R_j$ upon substituting the letters $\beta_i$ to $\alpha_i$. This gives an isomorphism between $G$ and $\bar{G}$. Since we have that $h(\tilde S) = F_k* \bar{G}$ for some $k$ given by $\#\{{\rm Stabilizations\ performed}\} + 1$, this completes the proof.
\end{proof}

\begin{remark}
{\rm	
	 We point out that the number of free factors $F_k$ appearing in the constructive proof of Theorem~\ref{thm:realization} depends on the particular presentation of the group $G$ that is chosen at the beginning. Given two different presentations of $G$, we obtain two generally different surface ribbons $S_1$ and $S_2$ whose fundamental heap is isomorphic to the group heap of $G$, up to a number of free factors. It is, therefore, desirable to determine a relation between the number of free factors appearing in the previous construction in terms of a given presentation of a group $G$. 

Let 
		$\langle \, \alpha_1, \ldots, \alpha_n\ |\ R_1, \ldots, R_m \, \rangle$ denote a presentation of $G$. Let $S$ denote the surface ribbon constructed in Theorem~\ref{thm:realization}, where $k$ is the number of free factors appearing in $h(S)$. Then we have $k = n + m +1$.
To see this, first note that in the construction of $S$ in Theorem~\ref{thm:realization}, each handle of the base surface is stabilized once, and contributes a free factor corresponding to a small loop
denoted by $a_i$ in Figure~\ref{realization1}. These handles correspond to relators, so that 
the number of these free factors is $m$. 
For the first generator $\alpha_{i_1}^{n_1}$ appearing in the first relator $R_1$: $w(\alpha_1,\ldots , \alpha_n) = \alpha_{i_1}^{n_1}\cdots \alpha_{i_r}^{n_r}$ in the proof of Theorem~\ref{thm:realization},
the handle wrapping around the first base handle depicted in Figure~\ref{realization1}
has a stabilization connected to the base handle depicted by a small blue arc in the figure, 
which contributes one free factor. 
When the same generator appears again, the corresponding wrapping handle is connected to the first as depicted in Figure~\ref{realization2}, and is not stabilized, so that it does not contribute any additional  free factor. Therefore each generator contributes one free factor. 
The external boundary component labeled $x$ runs over all handles and contributes one free factor.
 Hence we obtain  $k = n + m +1$.

}
\end {remark}

\section{Reversibility and additivity conditions}\label{sec:RAconditions} 

In this section, we consider two algebraic conditions for TSD operations and corresponding 2-cocycle conditions. Such additional conditions on 2-cocycles are used in the next section for constructions of cocycle invariants for surface ribbons.

\subsection{Reversibility and additivity for TSD operations}

\begin{definition}
{\rm
Let $(X, T)$ be a set with a ternary operation $T$.
We say that $(X, T)$ satisfies the {\it idempotency}  condition if 
$  T ( w,x,x) = w $ for all $w,x \in X$.
We say that $(X, T)$ satisfies  the {\it reversibility}  condition if 
$  T ( T(  w, x, y ), y, x) = w $ for all $w,x,y \in X$.
We say that $(X, T)$ satisfies  the {\it additivity}  condition if 
$  T ( T(  w, x, y ), y, z) = T(w, x, z) $ for all $w,x,y,z \in X$.
}
\end{definition}

\begin{remark}
{\rm
Additivity and idempotency conditions imply the reversibility condition by setting $x=z$.
}
\end{remark}

\begin{remark}
{\rm
The reversibility and additivity conditions are used in Lemmas~\ref{lem:fund} and \ref{lem:color} 
for the well-definedness of the fundamental heap and colorings under the YI and IY moves.
}
\end{remark}

\begin{example}
{\rm
Direct computations show that any group heap satisfies 
reversibility and additivity conditions.
}
\end{example}

\begin{example}
{\rm
Let $X$ be a module over $\Z [ t^{\pm 1}, s]$ and define a ternary operation by $T(x,y,z)=tx + sy + (1-t-s)z$. 
It is computed that $T$ is a TSD operation (e.g., \cite{ESZcascade}). 
Then direct calculations show that $X$ does not satisfy the reversibility and additivity conditions in general.
Thus these conditions can be used to detect non-heap TSD operations.
}
\end{example}

\subsection{Reversibility and additivity for TSD 2-cocycles}

\begin{definition}\label{def:cocy}
{\rm
Let $X$ be a group heap, and let $A$ be an abelian group.
A 2-cocycle $\psi \in Z^2_{\rm SD} (X, A)$ is said to be  {\it nondegenerate}
if $\psi(x,y,y)=0$ for all $x, y \in X$ \cite{ESZheap}. 
A 2-cocycle $\psi \in Z^2_{\rm SD} (X, A)$ is said to satisfy  the {\it reversibility} condition 
if it holds that 
$$ \psi(w, x, y) + \psi (w x^{-1} y, y, x ) = 0 $$
for all $w,x,y \in X$. 
A 2-cocycle $\psi$ 
 is said to satisfy  
the {\it additivity} condition if 
it holds that 
$$ \psi (w, x, y) + \psi(w x^{-1}y, y, z) = \psi (w, x, z) $$
for all $w,x,y,z \in X$. 
}
\end{definition}

\begin{remark}\label{rmk:additive}
{\rm
If a 2-cocycle $\psi\in Z^2_{\rm SD}(X, A)$ is nondegenerate, then the additivity condition on $\psi$ implies the reversibility condition.
}
\end{remark}

The reversibility and additivity conditions  ensure well-definedness 
of the cocycle invariant defined in the next section.

Direct computations imply the following.

\begin{lemma}\label{lem:revadd}
Let $X$ be a group heap and $A$  an abelian group.
Then any 2-coboundary $\delta f$, $f \in Z^1_{\rm SD} (X, A)$, satisfies the reversibility and additivity 
conditions. Furthermore, linear combinations of reversible and additive 2-cocycles are reversible and additive, respectively.
\end{lemma}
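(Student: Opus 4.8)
The plan is to verify each of the two assertions by a direct unwinding of the definitions, using the explicit 2-cocycle condition $(*)$ together with the two conditions from Definition~\ref{def:cocy}. First I would treat the coboundary claim. Recall that for $f \in C^1_{\rm SD}(X,A)$, the differential is $\delta f(x,y,z) = f(x) - f(xy^{-1}z)$, obtained by dualizing $d_2$. I would substitute this formula into the left-hand side of the reversibility condition:
$$
\delta f(w,x,y) + \delta f(wx^{-1}y,y,x) = \bigl[f(w) - f(wx^{-1}y)\bigr] + \bigl[f(wx^{-1}y) - f((wx^{-1}y)y^{-1}x)\bigr].
$$
The middle terms cancel, and since $(wx^{-1}y)y^{-1}x = wx^{-1}x = w$ in the group, the remaining terms cancel too, giving $0$. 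For additivity I would similarly compute
$$
\delta f(w,x,y) + \delta f(wx^{-1}y,y,z) = f(w) - f(wx^{-1}y) + f(wx^{-1}y) - f((wx^{-1}y)y^{-1}z) = f(w) - f(wx^{-1}z),
$$
again using $(wx^{-1}y)y^{-1}z = wx^{-1}z$, and this is exactly $\delta f(w,x,z)$. So both conditions hold for every coboundary.

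Next I would dispatch the linearity claim, which is essentially formal. Both the reversibility condition and the additivity condition are, for fixed arguments, $A$-linear equations in $\psi$: the reversibility condition asserts that the map $\psi \mapsto \psi(w,x,y) + \psi(wx^{-1}y,y,x)$ is the zero map, and additivity asserts that $\psi \mapsto \psi(w,x,y) + \psi(wx^{-1}y,y,z) - \psi(w,x,z)$ is the zero map. Since the evaluation maps $\psi \mapsto \psi(a,b,c)$ are additive in $\psi$, any $\Z$-linear combination of cocycles satisfying one of these identities again satisfies it. One should note that a linear combination of 2-cocycles is again a 2-cocycle (the cocycle condition $(*)$ is likewise linear in $\psi$), so the combination lands back in $Z^2_{\rm SD}(X,A)$, and the two extra conditions are preserved separately, as claimed.

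I do not expect a genuine obstacle here: the statement says "direct computations imply the following", and indeed everything reduces to the two group-theoretic cancellations $(wx^{-1}y)y^{-1}x = w$ and $(wx^{-1}y)y^{-1}z = wx^{-1}z$ plus the additivity of the coefficient group $A$. The only point requiring a little care is getting the sign conventions right in the formula for $\delta f$, i.e. making sure the dualization of $d_2$ yields $\delta f(x,y,z) = f(x) - f(xy^{-1}z)$ (up to an overall sign that does not affect the vanishing). Once that is pinned down, the coboundary computation is two lines and the linearity statement is immediate from bilinearity of evaluation; the cocycle condition $(*)$ itself is not even needed for this particular lemma, only the definitions of the reversibility and additivity conditions and of the differential.
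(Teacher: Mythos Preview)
Your proposal is correct and is precisely the ``direct computation'' the paper alludes to; the paper itself gives no proof beyond the sentence ``Direct computations imply the following.'' Your only slip is the sign of $\delta f$: dualizing the paper's $d_2$ gives $\delta f(x,y,z)=-f(x)+f(xy^{-1}z)$ rather than $f(x)-f(xy^{-1}z)$, but as you already noted this overall sign is irrelevant to both the reversibility and additivity verifications, so nothing changes.
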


In \cite{CS}, it was shown that certain identities satisfied by a quandle induce subcomplexes.
Similarly, it is expected that Lemma~\ref{lem:revadd} extends to higher dimensions to form
corresponding subcomplexes. 
We pose the following definition for dimension $2$. 

\begin{definition}
	{\rm 
	\begin{sloppypar}
Let $X$ denote a heap and let $H^2_{\rm SD}(X,A)$ denote the self-distributive second cohomology group of $X$. Let $Z^2_{\rm RA}(X,A)$ denote the subgroup of $Z^2_{\rm SD}(X,A)$ of $2$-cocycles that are reversible and additive. 
As an application of Lemma~\ref{lem:revadd}, the quotient $H^2_{\rm RA}(X,A) := Z^2_{\rm RA}(X,A)/B^2_{\rm SD}(X,A)$ is a well defined 
subgroup 
of $H^2_{\rm SD}(X,A)$, which we call the {\it reversible and additive} second cohomology group of $X$, or {\it RA cohomology} for short, and similarly, the RA cocycle group for $Z^2_{\rm RA}(X,A)$.
\end{sloppypar}
}
\end{definition}

Direct computations show the following.

\begin{proposition}
Let $(X,T)$  be a TSD set satisfying  reversibility and additivity conditions, $A$  an abelian  group, and 
let  $\psi: X^3 \rightarrow A$ be a function. 
Define a ternary operation on $X \times A$ by 
$ \hat{T} ((x, a), (y, b), (z, c) ) := ( T( x, y, z) , a + \psi (x, y,z) )$.  Then, $\hat T$ defines a TSD structure on $X\times A$ if and only if $\psi$ is a TSD $2$-cocycle. Moreover,
 $\hat{T}$ satisfies reversibility and additivity condition if and only if
$\psi$ satisfies the reversibility and additivity conditions.
\end{proposition}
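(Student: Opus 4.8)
The plan is to verify the claimed equivalences by straightforward (if somewhat lengthy) bookkeeping on the extended operation $\hat T$, separating the ``$X$-coordinate'' and the ``$A$-coordinate'' of each axiom. For any axiom of the form ``some composite of $T$'s equals another composite of $T$'s'', the first coordinate of the corresponding equation for $\hat T$ is exactly that axiom for $T$, which holds by hypothesis; so the content is entirely in the second ($A$-valued) coordinate. I would organize the proof as three parts, one for each claimed ``if and only if''.

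First, the TSD claim. Expanding $\hat T(\hat T(X_1,X_2,X_3),U,V)$ and $\hat T(\hat T(X_1,U,V),\hat T(X_2,U,V),\hat T(X_3,U,V))$ with $X_i=(x_i,a_i)$, $U=(u,b)$, $V=(v,c)$, the $X$-coordinates agree because $T$ is TSD. Matching the $A$-coordinates, the $a_i,b,c$ terms cancel on both sides (each appears once), and what remains is precisely the TSD 2-cocycle condition $(*)$: $\psi(x_1,x_2,x_3)+\psi(T(x_1,x_2,x_3),u,v) = \psi(x_1,u,v)+\psi(T(x_1,u,v),T(x_2,u,v),T(x_3,u,v))$ — using here that the heap/TSD operation is $T(x,y,z)=xy^{-1}z$ so that $T(x_1,x_2,x_3)u^{-1}v = (x_1u^{-1}v)(x_2u^{-1}v)^{-1}(x_3u^{-1}v)$, which is exactly self-distributivity of $T$ in the first argument. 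Since this must hold for all choices of the $A$-components, which are free, the equation for $\hat T$ holds for all inputs if and only if $\psi$ satisfies $(*)$, i.e. $\psi \in Z^2_{\rm SD}(X,A)$. This gives the first equivalence.

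Second, the reversibility claim. Compute $\hat T(\hat T(W,X,Y),Y,X)$ with $W=(w,a)$, $X=(x,b)$, $Y=(y,c)$. The $X$-coordinate is $T(T(w,x,y),y,x)$, which equals $w$ by reversibility of $T$. The $A$-coordinate is $a + \psi(w,x,y) + \psi(T(w,x,y),y,x) = a + \psi(w,x,y) + \psi(wx^{-1}y,y,x)$. Thus $\hat T(\hat T(W,X,Y),Y,X)=W$ for all inputs if and only if $\psi(w,x,y)+\psi(wx^{-1}y,y,x)=0$ for all $w,x,y\in X$, which is exactly the reversibility condition on $\psi$ from Definition~\ref{def:cocy}. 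The additivity claim is handled identically: $\hat T(\hat T(W,X,Y),Y,Z)$ has $X$-coordinate $T(T(w,x,y),y,z)=T(w,x,z)$ by additivity of $T$, and $A$-coordinate $a+\psi(w,x,y)+\psi(wx^{-1}y,y,z)$, which must equal the $A$-coordinate $a+\psi(w,x,z)$ of $\hat T(W,X,Z)$; this holds for all inputs precisely when $\psi(w,x,y)+\psi(wx^{-1}y,y,z)=\psi(w,x,z)$, the additivity condition on $\psi$.

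There is no serious obstacle here; the only point requiring a little care is the direction ``$\hat T$ satisfies the condition $\Rightarrow$ $\psi$ does'': one must observe that since the $A$-coordinates $a,b,c,\dots$ of the inputs are arbitrary and appear additively and cancel, the equation in the $A$-coordinate reduces to an identity purely in $\psi$ and the $x$-variables with no residual dependence on the $A$-components — so it holds for \emph{all} inputs to $\hat T$ iff the $\psi$-identity holds for all $x,y,z,\dots \in X$. With that remark, all three equivalences follow by direct computation, and I would present the argument in that compressed form, writing out one representative expansion (say the reversibility one) in full and indicating that the others are analogous.
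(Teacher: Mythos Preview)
Your proposal is correct and matches the paper's approach: the paper simply states ``Direct computations show the following'' before the proposition and gives no further argument, so the intended proof is exactly the coordinate-by-coordinate verification you describe. One cosmetic remark: you use $X_i$ for elements of $X\times A$ while $X$ already denotes the underlying TSD set, so in the written version choose different letters to avoid the clash.
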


In order to obtain 2-cocycles with reversibility and additivity conditions, we review constructions of cocycles from \cite{SZframedlinks}.
For completeness we include a proof of the first construction, while defer the reader to \cite{SZframedlinks} for a proof of the second.

\begin{lemma}\label{lem:Zn}{\rm (\cite{SZframedlinks})}
Let $\Z_n=\langle \, \zeta \mid \zeta^n=1 \, \rangle$ be the cyclic group of order $n$ 
in multiplicative notation with a generator $\zeta$.
Let $\phi_i=\sum_{x \in \Z_n } [ \sum_{j=0}^{n-1}  \chi_{(x,\zeta^j,\zeta^{j+i})} ] $, $i=1, \ldots, n-1$,
where $\chi$ denotes the characteristic function. 
Then $\phi_i$ is a nondegenerate $2$-cocycle, $\phi_i \in C^2_{\rm NDH}(\Z_n, \Z)$, for all 
$i=1, \ldots, n-1$.
Moreover, reducing coefficients modulo $n$, we obtain $\phi_i \in C^2_{\rm NDH}(\Z_n, \Z_n)$ for all $i$.
\end{lemma}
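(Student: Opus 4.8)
The plan is to verify directly that each $\phi_i$ satisfies the self-distributive $2$-cocycle condition $(*)$ and the nondegeneracy condition $\psi(x,y,y)=0$, working with the explicit sum of characteristic functions. First I would record that for the cyclic group heap $\Z_n$ with operation $[x,y,z] = xy^{-1}z$, a characteristic function $\chi_{(a,b,c)}$ evaluated at a triple $(x,y,z)$ is $1$ precisely when $(x,y,z)=(a,b,c)$, and that the cocycle condition $(*)$ with $\psi = \phi_i$ reads
$$
\phi_i(x,y,z) - \phi_i(xu^{-1}v, yu^{-1}v, zu^{-1}v) - \phi_i(x,u,v) + \phi_i(xy^{-1}z, u, v) = 0.
$$
Since $\phi_i$ is a sum over $x'\in\Z_n$ and $j$ of $\chi_{(x',\zeta^j,\zeta^{j+i})}$, I would evaluate each of the four terms on a fixed $5$-tuple $(x,y,z,u,v)$: the term $\phi_i(x,y,z)$ contributes $1$ iff $y=\zeta^j$ and $z=\zeta^{j+i}$ for some $j$, i.e. iff $z = y\zeta^i$ (with no constraint on $x$); similarly $\phi_i(x,u,v)$ contributes iff $v = u\zeta^i$. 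The key observation is that the ``middle'' arguments of $\phi_i$ are acted on by right multiplication in a way that is compatible: $\phi_i(xu^{-1}v, yu^{-1}v, zu^{-1}v)$ contributes iff $zu^{-1}v = (yu^{-1}v)\zeta^i$, i.e. iff $z = y\zeta^i$ again; and $\phi_i(xy^{-1}z,u,v)$ contributes iff $v = u\zeta^i$.

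The heart of the argument is then a case analysis on whether $z = y\zeta^i$ and whether $v = u\zeta^i$. If both hold, all four terms contribute $1$ and the signed sum is $1 - 1 - 1 + 1 = 0$. If exactly one holds, say $z = y\zeta^i$ but $v \ne u\zeta^i$, then the first and second terms contribute $1$ and the third and fourth contribute $0$, giving $1 - 1 - 0 + 0 = 0$; the symmetric subcase is identical. If neither holds, all terms vanish. Hence $(*)$ holds identically, so $\phi_i \in Z^2_{\rm SD}(\Z_n,\Z)$. For nondegeneracy I would simply note $\phi_i(x,y,y)$ counts pairs $j$ with $y = \zeta^j$ and $y = \zeta^{j+i}$, which forces $\zeta^i = 1$, impossible for $1 \le i \le n-1$; hence $\phi_i(x,y,y) = 0$ and $\phi_i \in C^2_{\rm NDH}(\Z_n,\Z)$.

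For the final sentence, reducing coefficients modulo $n$: the cocycle and nondegeneracy conditions are $\Z$-linear identities among integer-valued functions, so they remain valid after applying the ring homomorphism $\Z \to \Z_n$; thus the mod-$n$ reduction of $\phi_i$ lies in $C^2_{\rm NDH}(\Z_n,\Z_n)$. I would remark that passing to $\Z_n$ coefficients is what makes these cocycles genuinely useful for finite computations and is needed so that the later reversibility/additivity refinements (which also reduce mod $n$) make sense.

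The main obstacle I anticipate is purely bookkeeping: making sure the right-translation $w \mapsto wu^{-1}v$ interacts correctly with the indexing variable $j$ in the sum, so that the constraint ``$z = y\zeta^i$'' is genuinely preserved and not, say, shifted to a different power of $\zeta$. Concretely, one must check that if $y = \zeta^j$, $z = \zeta^{j+i}$ then $yu^{-1}v = \zeta^j u^{-1}v$ and $zu^{-1}v = \zeta^{j+i}u^{-1}v$ lie in the image of the parametrization $j' \mapsto (\zeta^{j'},\zeta^{j'+i})$ for a suitable $j'$ — which holds because $u^{-1}v \in \Z_n$ is itself a power of $\zeta$, so $\zeta^j u^{-1}v = \zeta^{j'}$ and $\zeta^{j+i}u^{-1}v = \zeta^{j'+i}$ with $j' \equiv j + (\text{exponent of }u^{-1}v) \pmod n$. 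Once this is pinned down the case analysis closes cleanly; I do not expect any subtlety beyond this.
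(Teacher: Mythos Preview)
Your proposal is correct and follows essentially the same approach as the paper: both arguments reduce to the observation that $\phi_i(a,b,c)=1$ iff $c=b\zeta^i$ (independently of $a$), then pair the four terms of $(*)$ so that terms 1 and 2 depend only on the condition $z=y\zeta^i$ while terms 3 and 4 depend only on $v=u\zeta^i$, yielding pairwise cancellation. Your write-up is slightly more detailed in that you spell out the full four-case analysis and explicitly verify nondegeneracy (which the paper leaves implicit), but the core idea is identical.
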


\begin{proof}
For a fixed $i$, the 2-cocycle $\phi_i$ vanishes for 2-chains $(x, u, v ) \in C_2^{\rm NDH} (X, \Z)$ if 
$v \neq u r^i$. 
Hence if $v\neq u r^i$, then the last two terms of $(*)$, $- \phi(x, u, v) +  \phi(x y^{-1} z , u, v) $,  both vanish.
If $v=ur^i$, then  both terms are $1$ and cancel. Hence we focus on the first two terms.

Let $v=u\zeta^k$, then the first two terms of $(*)$ are 
$\phi(x, \zeta^j, \zeta^{j+m}) - \phi(x \zeta^{k}, \zeta^{j+k}, \zeta^{j+m+k})$ for some $j, m \in \Z_n$. 
If $m=i$, then both terms are 1 and cancel. 
If $m\neq i$, then  both vanish. 
%
Hence $(*)$ holds.
\end{proof}

\begin{lemma} \label{lem:Dn}{\rm (\cite{SZframedlinks}) }
Let $D_n$ be the dihedral  group of order $2n$ generated by a rotation $\zeta$ and reflection $a$ with a relation $ a\zeta a = \zeta^{-1}$. 
Let $\psi_i=\sum_{x \in D_n } [ \sum_{j=0}^{n-1} (  \chi_{(x,\zeta^j,\zeta^{j+i})} +\chi_{ (x, a\zeta^{-j},a\zeta^{-j-i}  ) } )  ] $, $i=1, \ldots, n-1$.
Then $\psi_i$ is a nondegenerate $2$-cocycle, $\psi_i \in C^2_{\rm NDH}(D_n, \Z)$, for all 
$i=1, \ldots, n-1$. 
Moreover, reducing coefficients modulo $n$, we obtain $\psi_i \in C^2_{\rm NDH}(\Z_n, \Z_n)$ for all $i$.
\end{lemma}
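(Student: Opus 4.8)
The plan is to mimic the structure of the proof of Lemma~\ref{lem:Zn}, but now tracking the two orbit-types of $D_n$ — the rotation block $\{\zeta^j\}$ and the reflection block $\{a\zeta^{-j}\}$ — separately, since the cocycle $\psi_i$ is a sum of characteristic functions supported on both. The key observation to set up is a normal-form description: each element of $D_n$ is uniquely $\zeta^j$ or $a\zeta^j$, and under the group-heap operation $[x,y,z]=xy^{-1}z$ one computes how a middle-outer pair $(u,v)$ of the form $(\zeta^j,\zeta^{j+i})$ or $(a\zeta^{-j},a\zeta^{-j-i})$ behaves when conjugated/translated. Concretely, $\psi_i$ is supported exactly on triples $(x,u,v)$ where the pair $(u,v)$ lies in the support set $P_i := \{(\zeta^j,\zeta^{j+i}) : j\}\cup\{(a\zeta^{-j},a\zeta^{-j-i}): j\}$, with value $1$ there and $0$ otherwise, independently of $x$. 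So the whole problem reduces to understanding how the transformation $(u,v)\mapsto(u u'^{-1}v', v u'^{-1}v')$ in the cocycle identity $(*)$ permutes $P_i$.

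First I would verify directly that $\psi_i$ is a TSD $2$-cocycle, i.e. that
$$
\psi_i(x,y,z) - \psi_i(xu^{-1}v,yu^{-1}v,zu^{-1}v) - \psi_i(x,u,v) + \psi_i(xy^{-1}z,u,v) = 0.
$$
As in Lemma~\ref{lem:Zn}, the last two terms $-\psi_i(x,u,v)+\psi_i(xy^{-1}z,u,v)$ depend only on whether $(u,v)\in P_i$; since that membership is the same in both terms and the values do not depend on the first argument, these two terms always cancel. For the first two terms, I would show that the map $g=u^{-1}v$ acts on $D_n$ by right translation, and conjugation/translation by $g$ sends $P_i$ bijectively to itself — this is where the specific symmetric choice of the reflection part $\chi_{(x,a\zeta^{-j},a\zeta^{-j-i})}$ matters: right-multiplying a pair $(\zeta^j,\zeta^{j+i})$ by $\zeta^k$ stays in the rotation part of $P_i$, and right-multiplying by $a\zeta^k$ swaps it into the reflection part of $P_i$ with the index structure preserved, and symmetrically for the reflection pairs. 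Hence $(y,z)\mapsto(yg,zg)$ preserves support-membership, so $\psi_i(x,y,z)$ and $\psi_i(xg, yg, zg)$ are simultaneously $1$ or simultaneously $0$, and the first two terms cancel as well. This establishes $\psi_i\in Z^2_{\rm SD}(D_n,\Z)$.

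Next, nondegeneracy ($\psi_i(x,y,y)=0$) is immediate, since $(y,y)\notin P_i$ for $i\in\{1,\dots,n-1\}$: a pair $(\zeta^j,\zeta^{j+i})$ has equal coordinates only when $i\equiv 0\pmod n$, and likewise for the reflection pairs, so $\psi_i$ lands in $C^2_{\rm NDH}(D_n,\Z)$. Finally, reducing the integer coefficients modulo $n$ is a functorial operation that commutes with the differential, so $\phi_i$ — rather, $\psi_i$ — remains a nondegenerate $2$-cocycle with $\Z_n$ coefficients. I expect the main obstacle to be the bookkeeping in the support-preservation step: one must carefully check all four cases (rotation-pair times rotation, rotation-pair times reflection, reflection-pair times rotation, reflection-pair times reflection) and confirm that the "$+i$" shift in the index is consistently respected under the relation $a\zeta a=\zeta^{-1}$; the sign flip in the exponent on the reflection side is exactly why the reflection summand is written with $\zeta^{-j}$ and $\zeta^{-j-i}$ rather than $\zeta^{j}$ and $\zeta^{j+i}$, and getting this alignment right is the crux. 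Everything else is a routine transcription of the argument for $\Z_n$.
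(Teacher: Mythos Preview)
Your proposal is correct and follows the same template as the paper's proof of Lemma~\ref{lem:Zn}; in fact the paper does not include its own proof of Lemma~\ref{lem:Dn} at all, deferring instead to \cite{SZframedlinks}. Your extension of the $\Z_n$ argument is exactly the intended one: since $\psi_i(x,u,v)$ is independent of the first argument, the last two terms of the cocycle condition cancel automatically, and the first two terms cancel because right multiplication by any $g\in D_n$ preserves the support set $P_i$. The four-case check you outline is precisely what is needed, and the sign convention $a\zeta^{-j}$, $a\zeta^{-j-i}$ in the reflection block is, as you note, what makes the support set stable under the swap between cosets induced by multiplying by a reflection.
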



%
%
%

\begin{lemma}\label{lem:cocy}
Let $\vec{a} = (a_0, a_1, \ldots , a_{n-1} ) \in (\Z_n)^{n}$, where we set $a_0=0$.
Let $\phi_{\vec{a}}=\sum_{i=0}^{n-1} a_i \phi_i$ and  $\psi_{\vec{a}}=\sum_{i=0}^{n-1} a_i \psi_i$, where $\phi_i$  and $\psi_i$ are cocycles in Lemma~\ref{lem:Zn} and \ref{lem:Dn} with coefficients modulo $n$, respectively,
and we formally set $\phi_0=\psi_0\equiv 0$. 
Then $\phi_{\vec{a}}$ and $\psi_{\vec{a}}$ satisfy  the reversibility and additivity conditions if and only if 
$a_{k+\ell}=a_k + a_\ell$ for all $k, \ell$, where $k, \ell$ are taken  modulo $n$. 
\end{lemma}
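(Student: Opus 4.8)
The plan is to reduce the reversibility and additivity conditions on the linear combinations $\phi_{\vec a}$ and $\psi_{\vec a}$ to a clean arithmetic condition on the coefficient vector $\vec a$, by evaluating the cocycle identities on carefully chosen arguments. First I would record the explicit values of $\phi_i$ and $\psi_i$: for $\Z_n$, $\phi_i(x,\zeta^j,\zeta^k)$ equals $1$ when $k=j+i$ and $0$ otherwise (independently of $x$), so $\phi_{\vec a}(x,\zeta^j,\zeta^k)=a_{k-j}$ where indices are read mod $n$ and $a_0=0$; similarly for $D_n$ one gets $\psi_{\vec a}(x,y,z)=a_{\ell}$ where $\ell$ is the ``difference'' between the second and third arguments along a common coset of $\langle\zeta\rangle$, and $\psi_{\vec a}=0$ when $y,z$ lie in different cosets. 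The point is that both $\phi_{\vec a}$ and $\psi_{\vec a}$ are, when nonzero, functions only of a single $\Z_n$-valued difference between the last two variables.

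Next I would substitute into the reversibility identity $\psi(w,x,y)+\psi(wx^{-1}y,y,x)=0$. With $x=\zeta^j$, $y=\zeta^{j+i}$ (the only interesting case, since otherwise both terms vanish), the first term contributes $a_i$ and the second term contributes $\psi$ evaluated at last two arguments $(\zeta^{j+i},\zeta^j)$, i.e. $a_{-i}=a_{n-i}$. So reversibility is equivalent to $a_i + a_{n-i}=0$ for all $i$, i.e. $a_{n-i}=-a_i$; combined with $a_0=0$ this is exactly the statement $a_{k+\ell}=a_k+a_\ell$ specialized to $\ell=n-k$. For additivity, substitute into $\psi(w,x,y)+\psi(wx^{-1}y,y,z)=\psi(w,x,z)$ with $x=\zeta^j$, $y=\zeta^{j+k}$, $z=\zeta^{j+k+\ell}$: the left side becomes $a_k + a_\ell$ (the shift $wx^{-1}y$ does not affect the value, and the coset containing $y$ and $z$ is the same coset that will matter for the $D_n$ case), while the right side is $a_{k+\ell}$. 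Hence additivity is equivalent to $a_{k+\ell}=a_k+a_\ell$ for all $k,\ell$ mod $n$. Since additivity with the nondegeneracy of $\phi_i,\psi_i$ already implies reversibility (Remark~\ref{rmk:additive}), the two conditions together are equivalent to the single additive relation on $\vec a$, proving both directions.

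The two directions of the equivalence then come for free: if $a_{k+\ell}=a_k+a_\ell$ holds, the computations above show both identities are satisfied on all arguments (including the trivial cases where terms vanish), so $\phi_{\vec a}$ and $\psi_{\vec a}$ are reversible and additive; conversely, specializing the identities to the arguments above forces the relation. I would also remark that $a_{k+\ell}=a_k+a_\ell$ with $a_0=0$ says precisely that $k\mapsto a_k$ is a group homomorphism $\Z_n\to\Z_n$, i.e. $a_k=k a_1$, so the reversible-additive subspace is one-dimensional spanned by $\vec a=(0,1,2,\dots,n-1)$.

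The main obstacle I anticipate is purely bookkeeping in the dihedral case: one must check that on arguments of the form $(w,a\zeta^{-j},a\zeta^{-j-i})$ the relevant ``difference'' transforms correctly under the substitution $w\mapsto wx^{-1}y$ and under swapping the last two entries, so that the reversibility and additivity identities again collapse to the same arithmetic relation $a_{k+\ell}=a_k+a_\ell$; the sign conventions for reflections (the exponent of $\zeta$ negates) have to be tracked carefully, but no genuinely new idea is needed beyond the $\Z_n$ computation. A minor additional point is to verify that the ``mixed'' cases — where, say, the second and third arguments lie in different cosets of $\langle\zeta\rangle$ in $D_n$ — contribute zero on every term of each identity, so they impose no further constraint.
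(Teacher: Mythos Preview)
Your argument is essentially the paper's: record that $\phi_{\vec a}(x,\zeta^j,\zeta^{j+i})=a_i$ depends only on the difference $i$, plug into the additivity identity to obtain $a_k+a_\ell=a_{k+\ell}$, note that reversibility $a_{-k}=-a_k$ is a consequence, and treat $\psi_{\vec a}$ on $D_n$ by the analogous computation on each coset of $\langle\zeta\rangle$. The paper's proof is exactly this, stated more tersely, and your closing remark that the condition says $k\mapsto a_k$ is a group homomorphism is precisely the content of the Remark immediately following the lemma.
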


\begin{proof}
For $\phi$, any triplet $(x,y,z)\in (\Z_n)^3$ can be written as $(x, \zeta^j, \zeta^{j+i})$ for some $i, j \in \Z_n$.
Since $\phi(x, \zeta^{j}, \zeta^{j+i})=a_i$, 
one computes that the additivity 
$$\phi(x, \zeta^{j}, \zeta^{j+k}) + \phi ( x\zeta^j, \zeta^{j+k}, \zeta^{j+k+\ell} ) = \phi(x, \zeta^{j}, \zeta^{j+k+\ell}) $$ is 
equivalent to  $a_{k} + a_\ell = a_{k + \ell}$, where the subscripts are considered modulo $n$. The invertibility 
$\phi(x, \zeta^j , \zeta^{j+k}) = -  \phi( x \zeta^\ell , \zeta^{j+k}, \zeta^j )$ is equivalent to $a_{-k}=- a_k$, which 
follows from  the equation $a_{k} + a_\ell = a_{k + \ell}$.
Similar arguments apply to $\psi$. 
\end{proof}

\begin{remark}
{\rm
The preceding lemma implies that a cocycle for $\Z_n$ and $D_n$ are determined by the value of 
$a_1 \in \Z_n$. It is proved in \cite{SZframedlinks} that $\phi$ and $\psi$ are non-trivial in $H^2_{\rm SD} (X) $ 
 and generally linearly independent. From Examples~5.12 and 5.13 in that article, one also sees that if $\vec{a}\neq 0$, the cocycles $\phi_{\vec{a}}$ and $\psi_{\vec{a}}$ are nontrivial as well. 
}
\end{remark}

		Let $R$ be a ring considered  with abelian heap operation with respect to its additive structure.
		 In \cite{SZframedlinks} it was shown that  the $2$-cochains $\psi_{(a,b,c)} (x,y,z) = (ax+ b(z-y) + c)(z-y)$ are nontrivial $2$-cocycles for any choice of $a,b,c\in R$ with $a \neq 0$. In fact, it turns out that $\psi_{(a,b,c)} - \psi_{(a,b,0)}$ is a coboundary 		 and, therefore, we will omit the index $c$ in the rest of the article, and simply write $\psi_{(a,b)}$ for $\psi_{(a,b,0)}$. 
		
		\begin{lemma}\label{lem:cocyb}
			The cocycles $\psi_{(a,b)}$ are reversible and additive if and  only if $a = 2b$. 
		\end{lemma}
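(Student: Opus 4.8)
The plan is to substitute the explicit formula $\psi_{(a,b)}(x,y,z) = (ax + b(z-y))(z-y)$ into the two defining equations of Definition~\ref{def:cocy} and read off the condition on $a$ and $b$. Since $R$ is treated as an abelian heap under addition, the heap term $wx^{-1}y$ becomes $w - x + y$, so I would first rewrite the reversibility condition as
\begin{equation*}
\psi_{(a,b)}(w,x,y) + \psi_{(a,b)}(w - x + y,\, y,\, x) = 0
\end{equation*}
and the additivity condition as
\begin{equation*}
\psi_{(a,b)}(w,x,y) + \psi_{(a,b)}(w - x + y,\, y,\, z) = \psi_{(a,b)}(w,x,z).
\end{equation*}

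\textbf{Key steps.} First I would handle additivity, since by Remark~\ref{rmk:additive} (the cocycles $\psi_{(a,b)}$ are nondegenerate, as $\psi_{(a,b)}(x,y,y) = 0$) additivity implies reversibility, so it suffices to analyze one equation and then check the converse direction separately. Expanding the left side: $\psi_{(a,b)}(w,x,y) = (aw + b(y-x))(y-x)$, and $\psi_{(a,b)}(w-x+y, y, z) = (a(w-x+y) + b(z-y))(z-y)$. The right side is $(aw + b(z-x))(z-x)$. Writing $z - x = (z-y) + (y-x)$ and expanding the right-hand side in terms of the differences $p := y - x$ and $q := z - y$, I would collect all terms. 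The $p^2$, $q^2$, and $pq$ coefficients involving $b$, and the $wp$, $wq$ terms involving $a$, should match automatically; the surviving discrepancy is expected to be a term proportional to $(a - 2b)pq$ (coming from the cross term $a(y-x)(z-y)$ on the left versus the $2b$ worth of cross terms $b\cdot p\cdot q$ generated on the right). Setting this to vanish for all $p, q \in R$ — in particular taking $p = q = 1$ — forces $a = 2b$. Conversely, assuming $a = 2b$, I would verify by direct substitution that both the additivity and reversibility identities hold identically; for reversibility this is the special case $z = x$ of additivity combined with nondegeneracy, so it is immediate once additivity is established.

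\textbf{Main obstacle.} The only real subtlety is bookkeeping: one must be careful that $R$ need not be commutative, so the factors $ax$, $b(z-y)$, $(z-y)$ are multiplied in a fixed order and cannot be freely rearranged. I would keep the multiplication order exactly as written in the definition throughout, so that the cancellation of the $w$-dependent terms uses only left-distributivity and the cancellation of the purely-difference terms is an honest identity in the (possibly noncommutative) ring. Provided one does not accidentally commute anything, the computation is routine and the condition $a = 2b$ drops out cleanly; this matches the fact, already noted in the excerpt, that group heaps (the commutative-difference case) are automatically reversible and additive. Finally, I would remark that since $\psi_{(a,b)} - \psi_{(a,b,c)}$ is a coboundary, and coboundaries are reversible and additive by Lemma~\ref{lem:revadd}, the same criterion $a = 2b$ governs the full family $\psi_{(a,b,c)}$, consistent with having suppressed the index $c$.
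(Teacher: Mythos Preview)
Your approach is essentially identical to the paper's: reduce to additivity via nondegeneracy, expand both sides, and read off $a=2b$. The paper's proof is in fact even terser than your sketch.

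One small caveat on your ``main obstacle'': your claim that the computation goes through cleanly for noncommutative $R$ is not quite right. With $p=y-x$ and $q=z-y$, the difference of the two sides is $apq - bpq - bqp$. In a commutative ring this is $(a-2b)pq$ and the equivalence with $a=2b$ is immediate, but in a noncommutative ring $a=2b$ only gives $b(pq-qp)=0$, which need not vanish. The paper does not raise this point (and implicitly treats $R$ as commutative), so this does not affect the comparison; just don't overclaim in your write-up.
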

	\begin{proof}
		Since each $\psi_{(a,b)}$ is non-degenerate, it is enough to show that $\psi_{(a,b)}$ is additive. The additive condition 
		$$
				\psi_{(a,b)}(w,x,y) + \psi_{(a,b)}(w-x+y,y,z) = \psi_{(a,b)}(w,x,z)
		$$
		becomes 
		$$
				(aw+b(y-x))(y-x) + (a(w-x+y)+b(z-y))(z-y) = (aw+b(z-x))(z-x).
		$$
		This is readily seen to hold for all $w,x,y,z$ if and only if $a = 2b$.
	\end{proof}
We set $\psi_b := \psi_{(2b,b)}$ to denote the reversible and additive cocycles of Lemma~\ref{lem:cocyb}.

\subsection{Mutually distributive RA  cocycles}

Let $X$ be a group heap and let $A$ be an abelian group. Suppose that $\psi_1$ and $\psi_2$ are reversible and additive $2$-cocycles. 
If the pair $(\psi_1,\psi_2)$ is mutually distributive
 (as defined in Section~\ref{sec:TSDcoh}),
then we say that $(\psi_1,\psi_2)$ is a {\it reversible and additive mutually distributive pair}, or RA mutually distributive pair for short.

\begin{example}
	{\rm 
Let $\psi_b$ and $\psi_d$ denote two RA $2$-cocycles of Lemma~\ref{lem:cocyb}. Then in order to verify whether they are mutually distributive, it is enough, by symmetry,  to show the equality 
$$
\psi_b(x,y,z) + \psi_d(x-y+z,u,v) = \psi_b(x-u+v,y-u+v,z-u+v) + \psi_d(x,u,v).
$$
Using the definition of $\psi_b$ and $\psi_d$ we see that this holds for all $x,y,z,u,v$ if and only if $2(b-d) = 0$. 
}
\end{example}

\begin{definition}\label{def:sep}
	{\rm 
Let $X$ be a group heap and let $A$  an abelian group. A $2$-cocycle $\psi$ is said to be {\it separable} if it satisfies the $2$-cocycle condition pairwise, as follows: 
\begin{eqnarray*}
\psi(xy^{-1}y,u,v) &=& \psi(x,u,v),\\
\psi(x,y,z) &=& \psi(xu^{-1}v,yu^{-1}v,zu^{-1}v)
\end{eqnarray*}
for all $x,y,u,v \in X$.
In other words, $\psi$ is separable if and only if $\psi$ and the zero cocycle are 
mutually distributive.
}
\end{definition}

\begin{example}\label{ex:RAcompatibleb}
	{\rm 
	The cocycles $\phi_i$ of Lemma~\ref{lem:Zn} are separable, as a direct computation shows. Since $\phi = \sum_{i= 1}^{n-1} i \phi_i$ is additive and reversible as well, it follows that $\phi$ and the zero cocycle are RA mutually distributive cocycles. Similarly, the cocycles $\psi_i$ of Lemma~\ref{lem:Dn} are separable and, therefore, $\psi = \sum_{i= 1}^{n-1} i \psi_i$ is RA mutually distributive with the zero cocycle. 
}
\end{example}

\begin{remark}
{\rm
If $\bar \psi =(\psi_i)_{i=1}^n$ is a sequence of separable cocycles, 
then they are pairwise mutually distributive. 
}
\end{remark}

\section{Colorings and cocycle invariants of ribbon graphs}\label{sec:cocyinvariant}

A coloring of a surface ribbon diagram by a heap  is defined by assigning elements of the heap to double arcs as follows, in a manner similar to quandle coloring, and cocycle invariants are also similarly defined as in~\cite{CJKLS}. In this section we give such definitions, 
realizations of surface ribbons with non-trivial invariant values, and an application to non-trivial cohomology.

\subsection{Colorings} 

First we define and examine colorings of surface ribbon diagrams by heaps.

\begin{definition}
{\rm
Let $X$ be a heap. 
Let $D$ be a surface  ribbon diagram and ${\cal A}$  the set of doubled arcs.
A {\it coloring } of $D$ by $X$ is a map ${\cal C}: {\cal A} \rightarrow X$ that satisfies the {\it coloring condition}
as depicted in Figure~\ref{buildingblocks}  (A) and (C), where $(z,w) = (xu^{-1}v, y u^{-1} v )$.
}
\end{definition}

From the definition we obtain the following by checking the moves. 
The proof parallels that of Lemma~\ref{lem:fund}.

\begin{lemma}\label{lem:color}
The sets of colorings of  two surface ribbon diagrams 
are in bijection between each move listed in Figure~\ref{moves}.
\end{lemma}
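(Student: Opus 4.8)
The plan is to verify invariance of the coloring set under each of the moves in Figure~\ref{moves} by exhibiting, for each move, an explicit bijection between colorings of the two local diagrams that restricts to the identity on the arcs outside the region where the move takes place. Since the coloring condition is purely local, it suffices to check that every coloring of one side extends uniquely to a coloring of the other side with the boundary colors fixed. This is essentially the coloring-theoretic counterpart of the presentation-theoretic Lemma~\ref{lem:fund}, and I would organize the argument to parallel that proof move by move.

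First I would dispose of the framed Reidemeister moves RII, RIII and the CL move, which involve only crossings. These are handled exactly as in~\cite{SZframedlinks}: the RII move is immediate since the two crossings introduced are inverse, and the relation $(z,w)=(xu^{-1}v,yu^{-1}v)$ is reversed by $(x,y) = (zv^{-1}u, wv^{-1}u)$; for RIII one checks, as in the computation recalled in the proof of Lemma~\ref{lem:fund} and illustrated in Figure~\ref{heaptypeIIIcocy}, that the color of the bottom-right arc equals $xy^{-1}zu^{-1}v$ computed along either side, which is precisely the self-distributivity of the group heap operation $T(x,y,z)=xy^{-1}z$. The CL move follows as in~\cite{SZframedlinks}. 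Next, the IH move changes no coloring data at all: the colors propagate unchanged across a desingularized higher-order vertex, as indicated in Figure~\ref{assoc}, so the bijection here is the identity.

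The remaining cases are the YI and IY moves, which involve both a trivalent fat vertex and a ribbon sliding past it; these are the ones requiring the heap identities singled out in the lemma's proof sketch. For the YI move (left of Figure~\ref{BFdoublecolor}) one needs $T(T(u,x,y),y,z) = T(u,x,z)$, and for the IY move (right of Figure~\ref{BFdoublecolor}) one needs $T(T(y,u,v),v,u) = y$; these are, respectively, the \emph{additivity} and \emph{reversibility} conditions of Section~\ref{sec:RAconditions}, both of which hold for the group heap by direct computation (or by the Example stating that every group heap is reversible and additive). Granting these identities, the map taking a coloring on one side to the coloring on the other side — determined by the displayed formulas for the colors on the affected arcs and the identity elsewhere — is well defined in both directions, hence a bijection. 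Finally, one notes the antiparallel orientation convention on boundary curves is compatible with these local checks, so the bijections glue to a global bijection on coloring sets.

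I expect the main obstacle to be purely organizational rather than mathematical: there are several moves, each with orientation variants, and one must be careful that in the YI and IY cases the color that is "read off" along a ribbon passing over or under the fat vertex is indeed given by a single application of $T$ (this is where monochromaticity-type bookkeeping matters, as it did in the proof of Theorem~\ref{thm:heap}), so that the relevant identity is exactly additivity or reversibility and not some more complicated combination. Once the local color formulas are written correctly, each verification reduces to one of the group-heap identities already recorded, so no genuinely new computation is needed.
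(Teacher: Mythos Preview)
Your proposal is correct and follows essentially the same approach as the paper: the paper's proof is a one-line reference stating that the argument ``parallels that of Lemma~\ref{lem:fund},'' and you have unpacked precisely that parallel, invoking \cite{SZframedlinks} for RII, RIII, CL, noting that IH is trivial, and reducing YI and IY to the additivity identity $T(T(u,x,y),y,z)=T(u,x,z)$ and the reversibility identity $T(T(y,u,v),v,u)=y$ of the group heap, exactly as recorded at the end of the proof of Lemma~\ref{lem:fund}. The only excess is the mention of antiparallel orientations and monochromaticity bookkeeping, which play no role in the coloring bijection itself (they matter later for the cocycle invariant), but this does not affect correctness.
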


In particular, the number of colorings of a surface ribbon   
diagram by a finite heap $X$ is an invariant of a surface ribbon, 
that does not depend on the choice of a diagram, and is denoted by ${\rm Col}_X(S)$. 
Similarly to \cite{CJKLS,SZframedlinks}, the set of colorings of a surface ribbon $S$ by a heap $X$ can be considered as the set of heap homomorphisms
from $h(S)$ to $X$. Although the fundamental heap was defined by group presentations, these homomorphisms need not be group homomorphism; assigning a single color to all arcs that is not the identity element is a heap homomorphism but not a group homomorphism.

We observe that from the definition, if $x=y$ at a crossing as in Figure~\ref{buildingblocks} (A), then 
we have $z=w$. Consequently, if $x=y$ (the two colors are equal) at 
one pair of arcs of a ribbon, then the entire ribbon (band) has
this property. In this situation we say that this is a {\it monochromatic ribbon}.
We also note that if the over-arc is a
monochromatic ribbon, then 
the colors of the under-arc in Figure~\ref{buildingblocks} (A) 
satisfy $x=z$ and $y=w$, i.e.
a monochromatic overpassing ribbon does not change the colors of the under-arc.

\begin{example}\label{ex:colortrivial}
	{\rm 
	Let $S = B_1^n\natural B_2^m$ be the surface obtained by concatenating $n$ trivial handles and $m$ crossed handle pairs, as in Example~\ref{ex:loopband}. Then no 
	restriction on the assignment of colors arises for any 
	$X$.
	 The number of colors of $S$ by $X$ is $|X|^{n+m+1}$. 
}
\end{example}
 
 \begin{example}\label{ex:colortwisted}
 	{\rm 
 		Let $S = D(  m_1,\ldots , m_n)$ denote the surface of Example~\ref{ex:loops}, where we take the connected sum of $n$ copies of looped ribbons, 	with 
	$m_j$ twists for $j=1, \ldots, n$,
	constructed by attaching 
	copies of
	Figure~\ref{loops1} vertically.
	The $j^{\rm th}$ looped ribbon is denoted by $H^{m_j}$.
		We order the boundary components by taking $b_0$ to be the base of $S$
	(the components that contains the top through bottom outside curve in standard position), 
	and $b_1,\ldots , b_{n}$ are the inner boundary components of the handles in the order they appear in the standard position, from top to bottom. 
The colorings of $S$ by the cyclic group $X=\Z_m$ (taken here in multiplicative notation with a generator $\zeta$  as in Lemma~\ref{lem:Zn}) are determined as follows.

	The ribbon  $H^{m_j}$ has two boundary curves, the outer component belonging to
	$b_0$ and the inner component $b_{j}$. 
	The colors assigned are $y$ for the outer component (base) $b_0$ (that corresponds to
	the top and bottom arcs labeled by  $y$ in Figure~\ref{loops1}),
	and $x_j$  for the inner component $b_j$, that corresponds to $x$ in the figure, 
	where $j$ represents that $b_j$ is at the $j^{\rm th}$ handle  $H^{m_j}$. 
		Let $\alpha_j:= x_j^{-1} y$. 
	Then the coloring condition for $H^{m_j}$  is $\alpha_j^{m_j} =1$ from 
	Example~\ref{ex:loops}.
	 If $y = \zeta^t$ and $x_j = \zeta^{t_j}$, then $\alpha_j^{m_j} =1$ is equivalent to
	 $\zeta^{m_j (t-t_j)} =1$, that is, $m_j ( t-t_j ) \equiv 0$ modulo $m$.
	 Set $d_j := {\rm gcd}(m,m_j)$.
		 For each arbitrary $y$, there are $d_i$ solutions to the equation for $x_i$, namely given by $t - t_j = s_j \frac{m}{d_j}$, for $s_j = 0, \ldots, d_j-1$. The total number of colorings is $|X|\sum_j d_j$. 
 	}
 \end{example}

\subsection{Cocycle invariants with respect to the boundary curves} 

In this subsection  we consider heap cocycle invariants of surface ribbons.
Let $X$ be a heap and $D$  a surface ribbon diagram.
Then a 2-cocycle invariant is defined in a manner similar to the quandle 2-cocycle invariant
as follows.

\begin{figure}[htb]
\begin{center}
\includegraphics[width=1.3in]{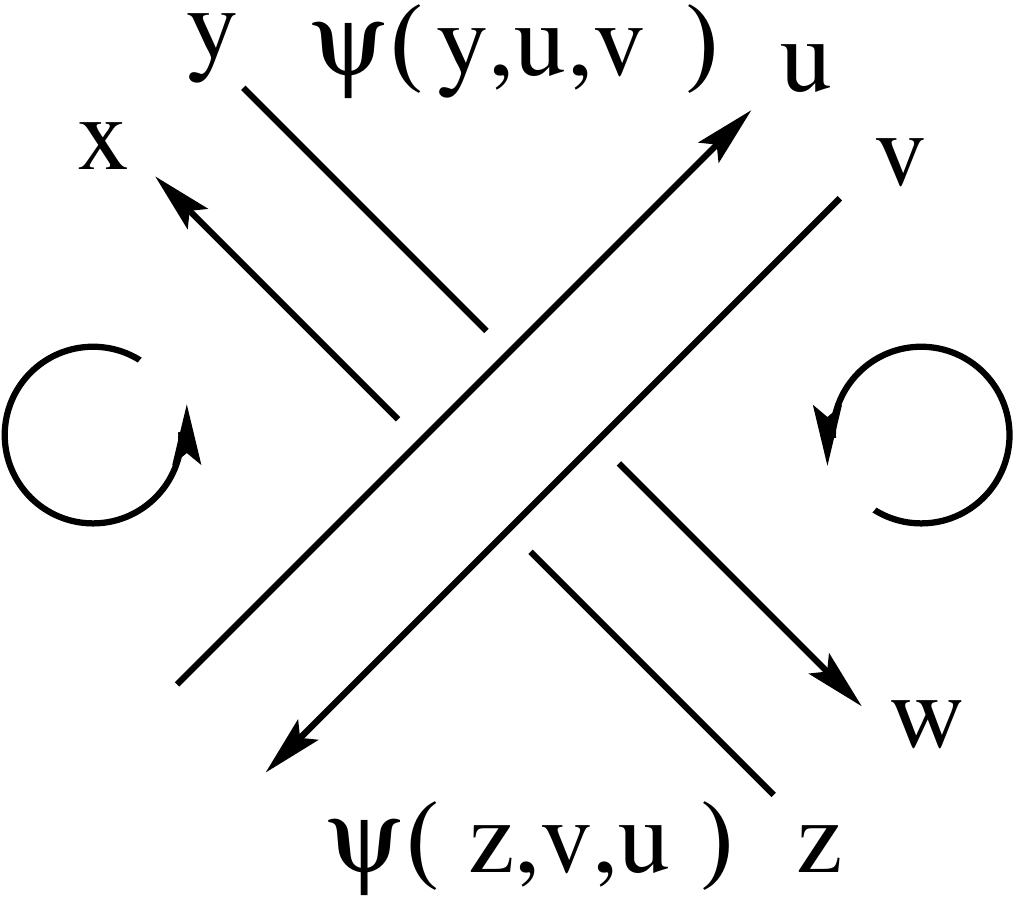}
\end{center}
\caption{Orientations and labeling across double arcs}
\label{ori}
\end{figure}

The orientations of boundary curves of a surface ribbon diagram $D$ are defined as depicted in 
Figure~\ref{ori}. The curves of a ribbon are oriented in antiparallel directions, in such a way that 
are consistent with the counterclockwise orientation of the complementary regions as depicted.

\begin{definition}\label{def:cocyinvariant}
{\rm
Let $X$ be a heap and  $A$  an abelian group 
with multiplicative notation.
Let $\bar \psi = (  \psi_1, \ldots, \psi_n ) $ be a sequence of pairwise mutually distributive RA $2$-cocycles 
of $X$ with coefficient group $A$,  so that $\psi_i\in Z^2_{\rm RA}(X,A)$ for all $i=1, \ldots, n$ and 
each pair is mutually distributive.
The  {\it 2-cocycle heap invariant} of a surface ribbon $S$ with respect to 
$\bar \psi = (\psi_1, \ldots, \psi_n)$ is defined as follows.

Let $D$ be an oriented diagram of a surface ribbon $S$.
Let $S= S_1\cup \cdots \cup S_n$ denote the connected components of $S$, and 
$D= D_1\cup \cdots \cup D_n$ be the corresponding oriented diagram.
We decorate $D_i$ with the $2$-cocycle $\psi_i$ for each $i$.

For each $D_i$, we order the connected components of its boundary;
let $b_i^j$ be the ordered boundary components of $D_i$.  We fix a base point in each $b_i^j$ and order the arcs of $D_i$ following the orientation of $b_i^j$, as well as the crossings  
where $b_i^j$ underpasses. 
Let $\tau_i^j(1), \ldots , \tau_i^j(r)$ denote the crossings where $b_i^j$ underpasses in this order. 

We define, for a given coloring $\mathcal C$, $\Psi_{\bar \psi}^i(\mathcal C, D) = \otimes_j \prod_k B^{\bar \psi}(\mathcal C,\tau_i^j(k))$, where the tensor product runs over each boundary component of $D_i$, the product runs over all the crossings where boundary components $b_i^j$ underpasses, and at each crossing we use the cocycle that decorates the overpassing connected component:
$B^{\bar \psi}(\mathcal C,\tau_i^j(k)) = \psi_{\ell(i,j,k)}  (x, y, z)$, where $\ell (i,j,k) $ is the number assigned to 
the overpass at $\tau_i^j (k)$, $x$ is the color assigned to  the undrpass arc right before $\tau_i^j (k)$, $(y,z)$ is 
the   pair of colors assigned to the overpass that appear in this order (cf. Figure~\ref{heaptypeIIIcocy}). 
The invariant values are considered equivalent up to permutations of tensor factors of each 
$\Psi_{\bar \psi}^i(\mathcal C,S)$, to allow renumbering boundary components.
Thus the value is regarded as an element of the symmetric algebra $S(\Z[A])$ (in fact its subspace 
of degree being the number of boundary components), but we also take a tensor form
$x_1 \otimes \cdots \otimes x_k$ as invariant values, regarding it as a representative of 
elements of $S(\Z[A])$, so that tensors of permutations of $x_j$s are considered equal. 
Then we set
$$
\Psi_{\bar \psi}(S) = \sum_{\mathcal C} (\Psi_{\bar \psi}^1(\mathcal C,S), \ldots , \Psi_{\bar \psi}^n(\mathcal C,S)),
$$
where each entry of this formal vector corresponds to one connected component of $S$, and the sum refers to each entry of the vector component-wise.   
}
\end{definition}

\begin{figure}[htb]
\begin{center}
\includegraphics[width=1.3in]{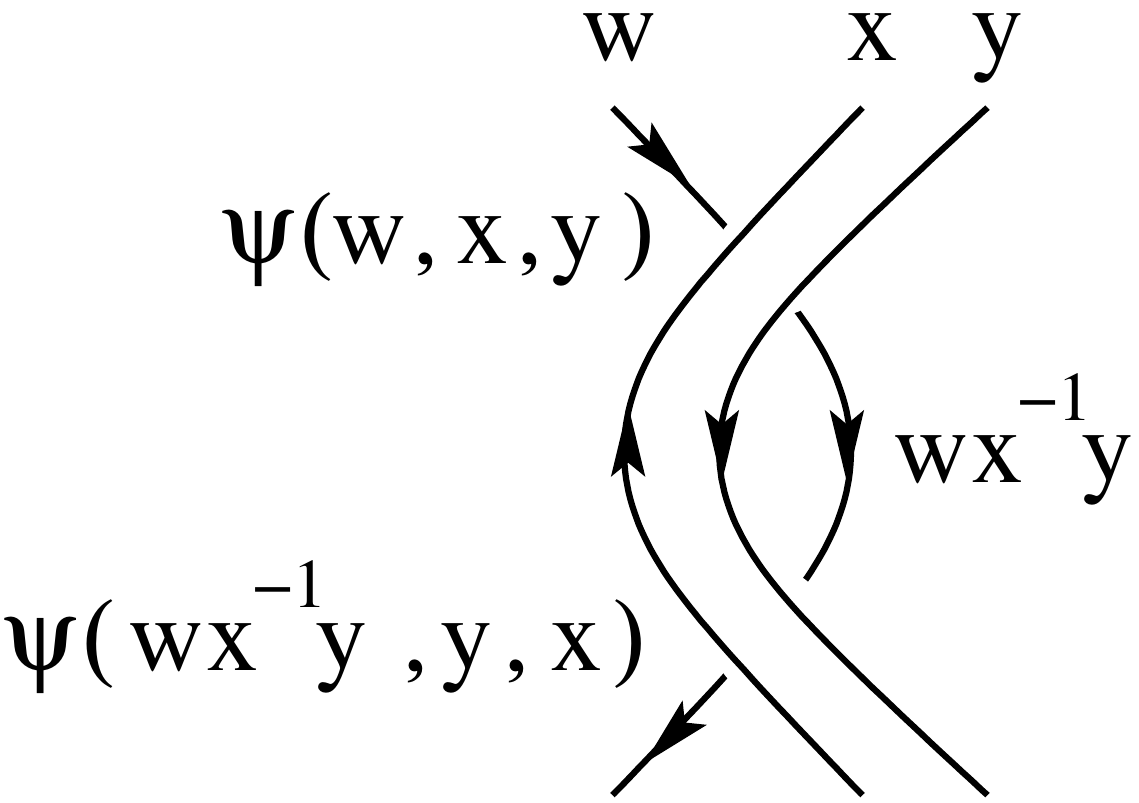}
\end{center}
\caption{Invariance under the Reidemeister type II move}
\label{typeII}
\end{figure}

\begin{figure}[htb]
\begin{center}
\includegraphics[width=3.5in]{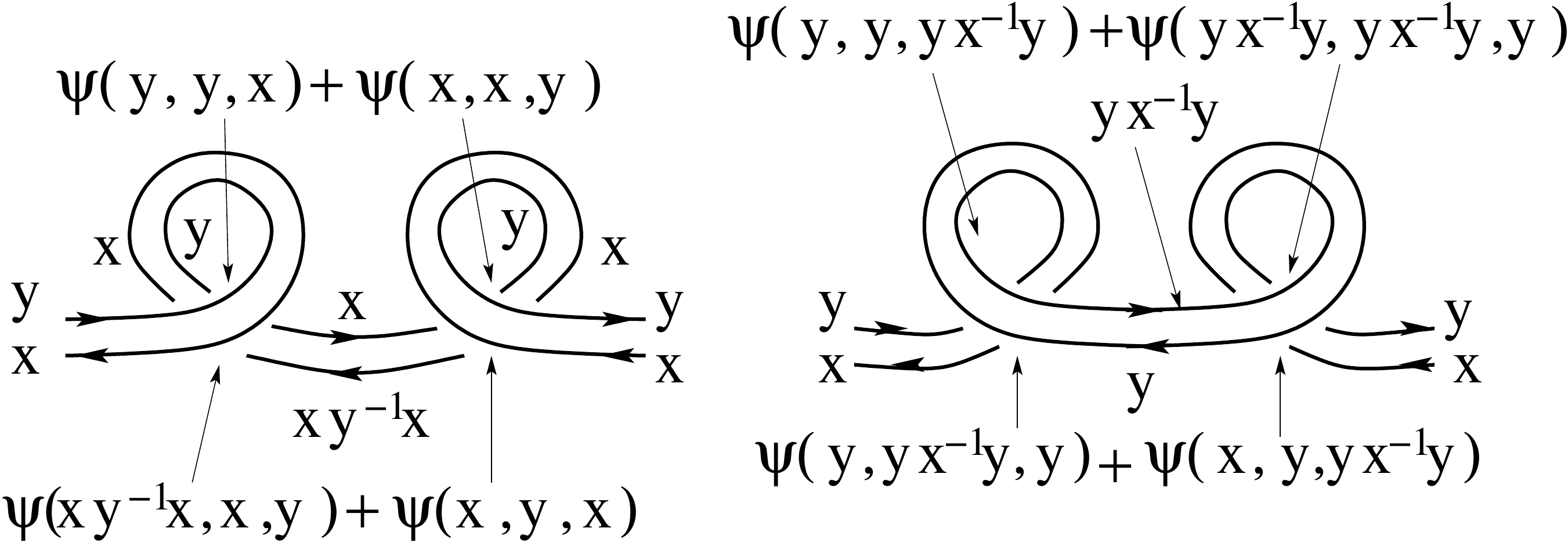}
\end{center}
\caption{Invariance under the cancelation move}
\label{CL}
\end{figure}

The following is a special case of labeled homology defined in \cite{ESZcascade}
restricted to RA cocycles.

\begin{definition}
{\rm
Let $X$ be a heap and  $A$ an abelian group 
with multiplicative notation.
Let $\bar \psi = (  \psi_1, \ldots, \psi_n ) $ be a sequence  of pairwise mutually distributive RA $2$-cocycles 
of $X$ with coefficient group $A$.

We call $\bar \psi$ a {\it coboundary} if  $\psi_i$ are coboundaries simultaneously, that is, 
 there is a 1-cochain $f \in C^1_{\rm SD}(X,A)$ such that 
$\psi_i=\delta f$ for all $i=1, \ldots, n$. 
Two sequences $\bar \psi = ( \psi_i)_{i=1}^n $ and $\bar \psi' = ( \psi_i')_{i=1}^n$
are called {\it cohomologous} if $\bar \psi - \bar \psi'$ is a coboundary. 
The equivalence classes by this relation of cohomologous is called 
the {\it cohomology class}  $[\bar \psi]$, and they form an abelian group by component addition. 
The group of cohomologous classes of $\bar \psi$ is
denoted by $\bar H^{2,n}_{RA}(X,A)$. 
}
\end{definition}

The following is proved by arguments similar to those found in \cite{CJKLS}, with the only difference that we need to 
 take into consideration that different cocycles may decorate different  overpassing connected components.

\begin{theorem}\label{thm:cocyinvariant}
	The $2$-cocycle heap invariant is indeed an invariant of surface ribbons. Moreover, a labeled 2-coboundary yields an integer multiple of the vector with trivial tensors in its entries (with an appropriate number of entries, and an appropriate number of tensor products in each entry). 
	The cocycle invariant  $\Psi_{\bar \psi} (S)$ depends only on the labeled cohomology class $[\bar \psi]\in \bar H^{2,n}_{\rm RA}(X,A)$. 
\end{theorem}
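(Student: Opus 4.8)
The plan is to verify invariance of $\Psi_{\bar\psi}(S)$ under each of the moves of Figure~\ref{moves}, using the fact (Lemma~\ref{lem:color}) that the colorings are in bijection across each move. Since the state sum runs over all colorings, it suffices to show that for corresponding colorings $\mathcal C$ and $\mathcal C'$ on the two sides of a move, the contributions $(\Psi^1_{\bar\psi}(\mathcal C,S),\ldots,\Psi^n_{\bar\psi}(\mathcal C,S))$ agree componentwise, up to the allowed permutations of tensor factors. I would organize the argument move-by-move: first the framed Reidemeister moves RII, RIII, CL (as in \cite{CJKLS,SZframedlinks}), then the graph moves IH, IY, YI involving the trivalent fat vertices.

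For RII (Figure~\ref{typeII}), the two crossings created are of opposite sign, so the two Boltzmann weights are $\psi_{\ell}(x,y,z)$ and the inverse contribution; the antiparallel orientation convention on boundary curves means the weight and its reverse appear along the underpassing boundary curve, and they cancel precisely because of the \emph{reversibility} condition $\psi(w,x,y)+\psi(wx^{-1}y,y,x)=0$ (Definition~\ref{def:cocy}). For RIII (Figure~\ref{heaptypeIIIcocy}), the colors on the two sides match by self-distributivity of the heap, and the equality of the products of the three weights on each underpassing strand is exactly one of the mutual distributivity relations of Section~\ref{sec:TSDcoh}; here one must be careful that the two strands passing under may lie in different connected components, so the two cocycles involved are $\psi_i,\psi_j$ with $i\ne j$ — this is why a \emph{pairwise} mutually distributive family is required rather than a single cocycle. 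For CL (Figure~\ref{CL}), the curl introduces a pair of crossings whose weights cancel, again by reversibility together with nondegeneracy of the cocycles. For the IH move there is no change in crossings or colors, so nothing to check. For IY and YI (Figure~\ref{BFdoublecolor}), a ribbon slides across a fat vertex; the new crossings created are handled by the \emph{additivity} condition $\psi(w,x,y)+\psi(wx^{-1}y,y,z)=\psi(w,x,z)$, which says precisely that the two weights picked up by a strand passing under a vertex combine into the single weight it would pick up after the slide — and one checks that the colors change according to $T$ consistently with Lemma~\ref{lem:color}. Throughout, I would note that the ordering of crossings along a boundary curve only matters up to the tensor-factor permutations already declared equivalent, and that the base-point choice is immaterial for the same reason.

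For the second statement, suppose $\bar\psi=(\delta f,\ldots,\delta f)$ for some $f\in C^1_{\rm SD}(X,A)$. Along a fixed boundary component $b_i^j$, the product $\prod_k B^{\bar\psi}(\mathcal C,\tau_i^j(k))$ of the coboundary weights telescopes: each underpass contributes $\delta f(x,y,z)=f(x)-f(T(x,y,z))+(\text{terms in the overpass colors})$, and because the overpass colors along a ribbon come in antiparallel pairs with the closed boundary returning to its starting color, the cumulative product collapses to the identity of $A$. Hence each tensor entry $\Psi^i_{\bar\psi}(\mathcal C,S)$ becomes the trivial tensor $1\otimes\cdots\otimes 1$, independent of $\mathcal C$, so summing over all $|{\rm Col}_X(S)|$ colorings gives the integer ${\rm Col}_X(S)$ times the vector of trivial tensors. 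The final assertion — dependence only on $[\bar\psi]\in\bar H^{2,n}_{\rm RA}(X,A)$ — then follows by bilinearity: if $\bar\psi$ and $\bar\psi'$ are cohomologous, write $\bar\psi=\bar\psi'+\delta\bar f$; since for each coloring the weight of $\bar\psi$ at a crossing is the product of the weights of $\bar\psi'$ and $\delta\bar f$, the total state-sum contribution of $\mathcal C$ for $\bar\psi$ factors as that for $\bar\psi'$ times the trivial tensor from $\delta\bar f$ computed above, so $\Psi_{\bar\psi}(S)=\Psi_{\bar\psi'}(S)$.

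The main obstacle I anticipate is the bookkeeping around the antiparallel orientation convention and the trivalent fat vertices: one must carefully track which of the two doubled arcs of a ribbon a given boundary curve follows, in which direction, and therefore which term of each pair of weights is actually recorded, so that reversibility and additivity are invoked with the correct signs and arguments. In particular, checking the YI/IY moves requires confirming that the color change $T(x,u,v)$ propagating through a vertex is compatible with the additivity identity in \emph{both} the reversible direction and the forward direction, and that no spurious relator on the sliding ribbon (which is arranged to be monochromatic, cf. the remark after Example~\ref{ex:colortrivial}) disturbs the weights — this is where the bulk of the diagrammatic verification lies, even though each individual identity is a short computation.
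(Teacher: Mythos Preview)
Your overall approach matches the paper's: verify invariance move by move using the bijection of colorings, then telescope for coboundaries. A few of your attributions are off and would not go through as written.

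For CL, nondegeneracy is not what does the work. The paper needs, in addition to reversibility, the identity $\psi(x,y,z)=\psi(x,z,zy^{-1}z)$, obtained from the $2$-cocycle condition $(*)$ by setting $y=u$, $z=v$; these two together cancel the four arc-crossing weights in the curl (Figure~\ref{CL}). For the vertex moves you should not lump IY and YI together under additivity: only YI (a boundary arc passing \emph{under} the three legs of a vertex, Figure~\ref{Yori}) uses additivity; IY (a ribbon sliding \emph{over} a vertex, Figure~\ref{Yoriover}) reduces to an RII-type cancellation on a single arc and hence uses reversibility. Your parenthetical about arranging the sliding ribbon to be monochromatic is not relevant and should be dropped. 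Finally, in this TSD cohomology one has $\delta f(x,y,z)=f(x)-f(xy^{-1}z)$ with no ``terms in the overpass colors''; the telescoping along each closed boundary curve follows directly from that formula, exactly as in \cite{CJKLS}.
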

\begin{proof} 
The invariance is proved by checking Reidemeister moves.
More than one component of overpassing ribbons appears only in the type III move,
so that it is delayed to the last, and the remaining cases are checked for a single cocycle 
assigned on overpassing ribbons.
The invariance under the Reidemeister type II move (RII in Figure~\ref{moves}) 
follows from the reversibility condition of 2-cocycles as depicted in Figure~\ref{typeII}. Observe that this is done for a single under-arc and, therefore, it shows invariance of $\Psi_{\psi} (S)$ with respect to each boundary component.
The IH move does not involve cocycles and keeps $\Psi$ unchanged.
The cancelation move (CL in Figure~\ref{moves}) follows from the reversibility condition 
and the equality 
 $\psi(x, y,z)=\psi(x, z, z y^{-1} z ) $ 
which  is obtained by setting $y=u$ and $z=v$ in  the 2-cocycle condition
$(*)$  and changing  variables.
The cancelations of 2-cocycles under the CL moves are depicted in Figure~\ref{CL}, where
$+$ indicates pairs of terms that cancel. It is clear that the canceling terms are paired with respect to different arcs and, consequently, weights corresponding to different boundary components remain unchanged.
The invariance under YI move and IY move follow from the additivity condition and reversibility condition, respectively,
and depicted in Figures~\ref{Yori} and \ref{Yoriover}. Observe that Figure~\ref{Yori} refers to a single boundary component that is slid beneath a fat vertex, while Figure~\ref{Yoriover} is obtained from the single arc Reidemeister move II of Figure~\ref{typeII} relative to arcs $y$ and $u,v$.

\begin{figure}[htb]
\begin{center}
\includegraphics[width=1.8in]{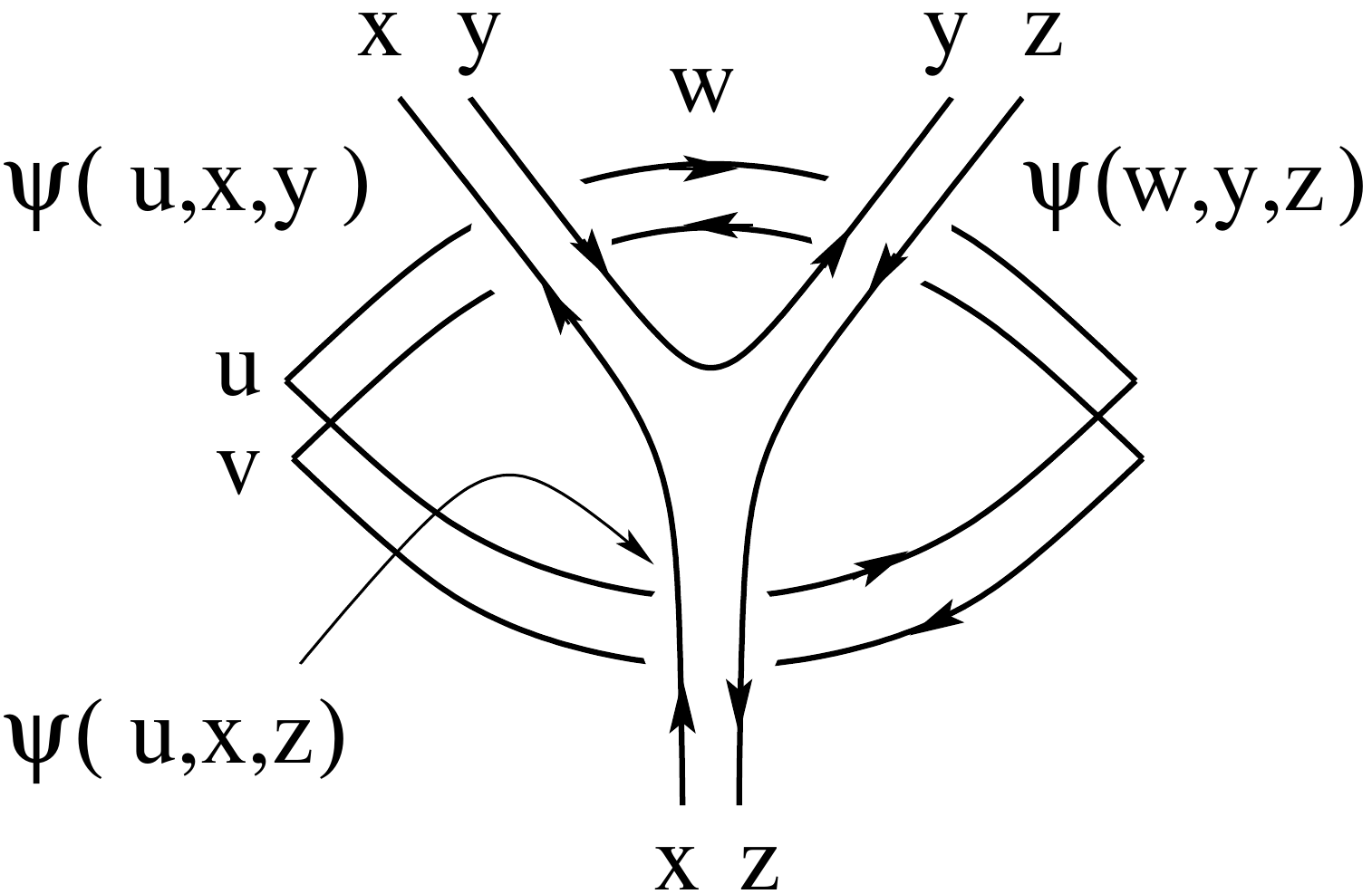}
\end{center}
\caption{Labels of arcs behind  a trivalent vertex}
\label{Yori}
\end{figure}

Lastly we check the type III move, refer to Figure~\ref{heaptypeIIIcocy}.
In the figure, there are two overpassing ribbons, the middle one $R_1$ labeled by $(x,y)$ at the top, 
and the top one  $R_2$ labeled by $(u,v)$.
Assume that
 $R_1$ and $R_2$
belong to distinct connected components of the given surface ribbon $S$,
and assigned two cocycles $\psi_1$ and $\psi_2$, respectively. 
Then the ribbon crossings are assigned cocycle values as indicated in the figure,
and the equality of the LHS and RHS is exactly the definition of mutual distributivity in Section~\ref{sec:TSDcoh}.
If $R_1$ and $R_2$ belong to the same component, then the equality follows from the original 2-cocycle condition.
It follows that $\Psi_\psi (S)$ is well defined.

Let $S$ be a surface ribbon diagram.
Then colored boundary components  represent 2-cycles of  $Z_2^{\rm SD} (X, \Z)$.
Let  $S_{\mathcal C}$ be a colored diagram, and $b_{\mathcal C}$ be one of the boundary components
of $S_{\mathcal C}$.
Let $\tau$ be a crossing of $b_{\mathcal C}$ where it goes under a ribbon colored by $(y,z)$ in this order,
changing the color from $x$ to $xy^{-1}z$. If the cocycle assigned to the overpassing ribbon is $\psi_i$,
then the weight assigned to $\tau$ is $\psi_i(x,y,z)$. 
Suppose $\bar \psi $ is a coboundary, then  $\psi_i=\delta f$ for some $f \in C^1_{\rm SD}(X,A)$,
so that  $\psi_i(x,y,z)= \delta f (x, y, z)=f(x) - f (xy^{-1}z)$. 
Assign $f(x)$ to the arc colored by $x$ near $\tau$, and $- f(x y^{-1} z) $ to the arc colored by $x y^{-1} z$.
Since $b_{\mathcal C}$ is a closed curve, these assigned values cancel at the both ends of each arc.
(This argument is similar to that of \cite{CJKLS}.)
Hence the tensor factor corresponding to $b_{\mathcal C}$ is trivial, $e \otimes \cdots \otimes e$, for the multiplicative identity $e$ of $A$. 
Then $\Psi_\psi (S)$ depends only on the cohomology class $[\bar \psi]$. 
In particular, if $\Psi_\psi (S)$ is non-trivial, then $[\bar \psi]\neq 0$.
\end{proof}

\begin{figure}[htb]
\begin{center}
\includegraphics[width=3.2in]{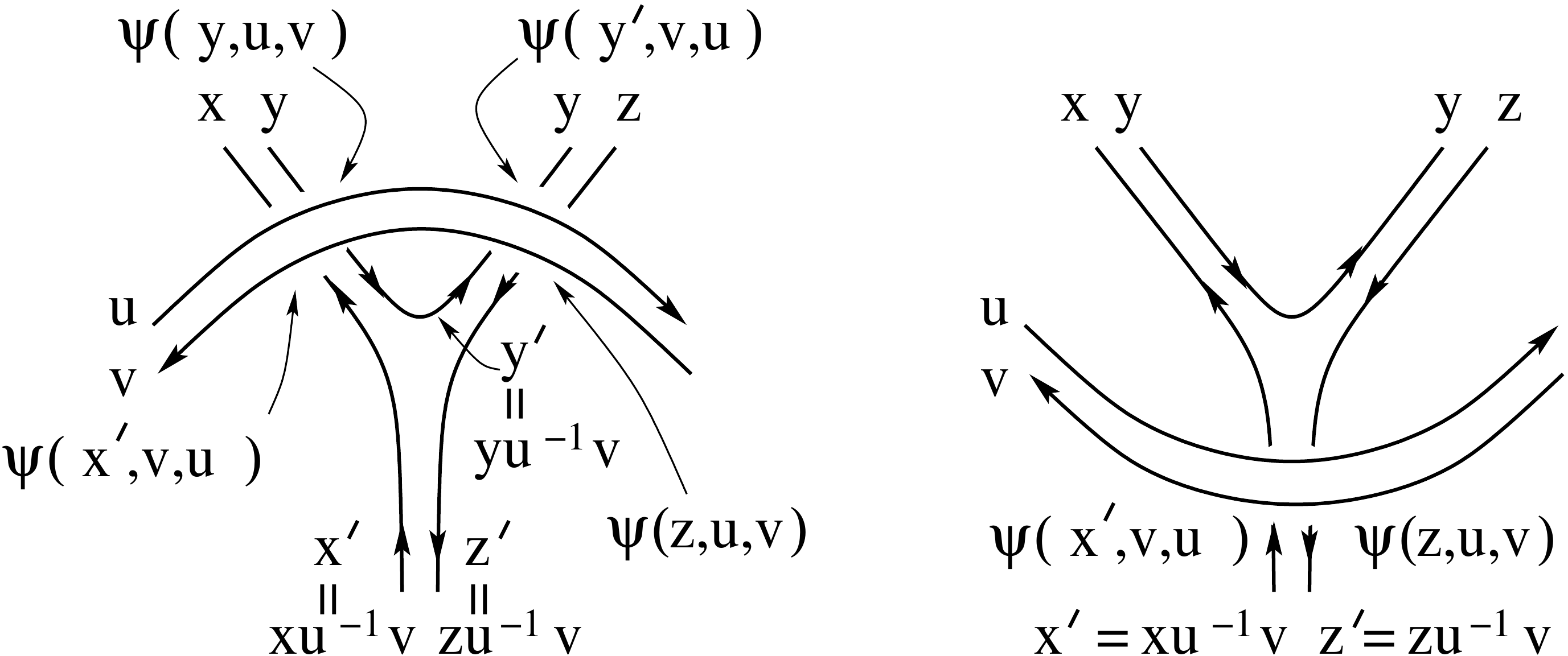}
\end{center}
\caption{Labels of arcs over  a trivalent vertex}
\label{Yoriover}
\end{figure}

\begin{remark}
	{\rm 
We observe that if $\psi = \delta f$, then the integer 
that is the coefficient of $e\otimes \cdots \otimes e$
 is the number of colorings of $S$ by $X$. This is, in fact, a direct consequence of the proof of the theorem, since for a given $\delta f$ each coloring determines a copy of the trivial vector.
}
\end{remark}

\begin{example}\label{ex:cocyloopband}
	{\rm 
	Let $S = B_1^n\natural B_2^m$ be the surface obtained by connecting $n$ single trivial handles and $m$ crossed handle pairs, as in Example~\ref{ex:loopband}. 
		There are $m+1$ components of  boundary curves.
	Let $X$ be a heap and let $\psi$ denote an additive and reversible $2$-cocycle with coefficients in $A$. Then, since each crossing contributes trivially to each boundary component, and the coloring conditions are trivially satisfied, 
 we have $\Psi_\psi(S) = |X|^{n+m+1}\cdot  e^{\otimes (m+1)}$, where $e$ is the unit of $A$.
 }
\end{example}

We consider an example where nontrivial contributions arise.

\begin{figure}[htb]
	\begin{center}
		\includegraphics[width=1.2in]{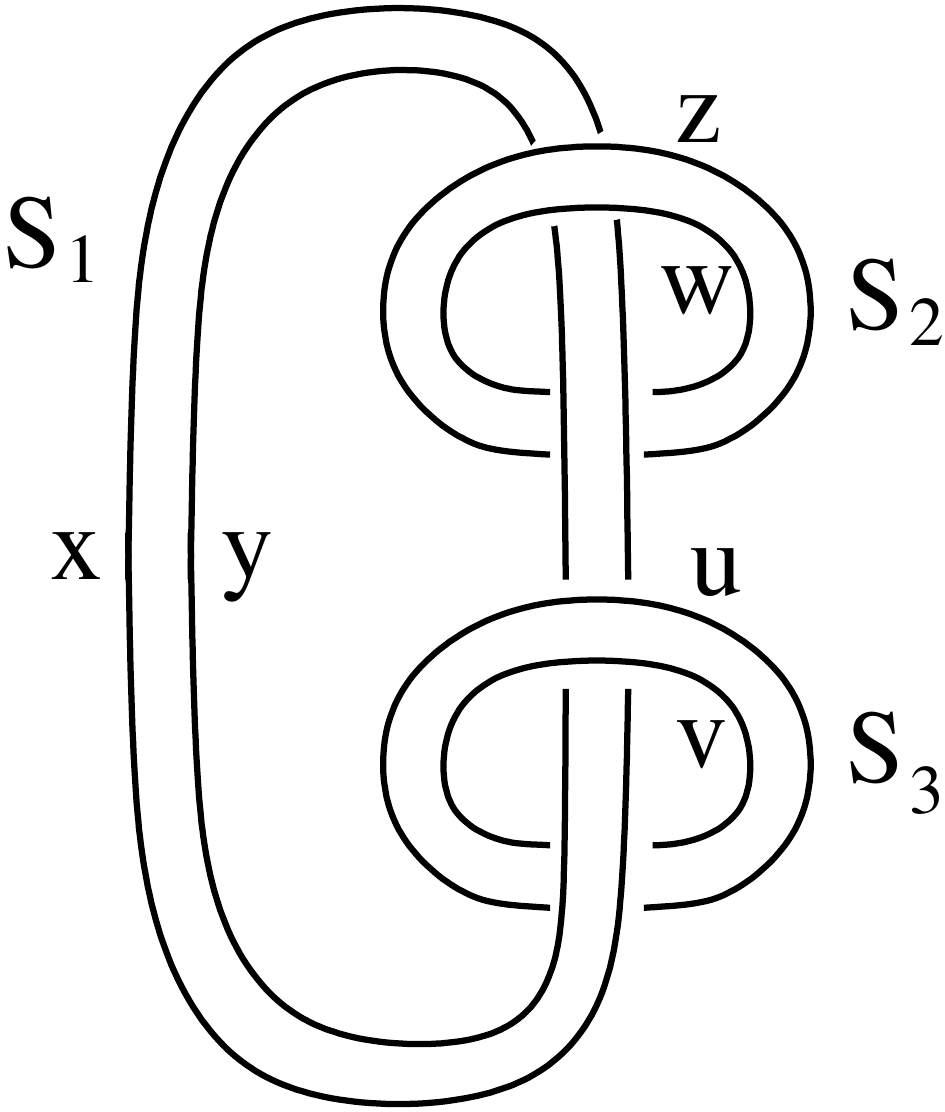}
	\end{center}
	\caption{Two unlinked annuli linking a third annulus}
	\label{rings}
\end{figure}

\begin{example}\label{ex:minimalnontrivial}
	{\rm 
	Let $S = S_1 \cup S_2 \cup S_3$ denote the surface consisting of three annuli where $S_2$ and $S_3$ link $S_1$ but are mutually unlinked, as depicted in  Figure~\ref{rings}. 
	Let $X = \mathbb Z_n$ in multiplicative notation generated by $\zeta$, $A = \mathbb Z_n$
	also  in 	multiplicative notation generated by $g$,
	and let $\phi$ denote the RA $2$-cocycle of Example~\ref{ex:RAcompatibleb} which is separable, and therefore mutually distributive with the zero cocycle. 
	Let us take the triple of  pairwise mutually distributive cocycles $(0,\phi,0)$, where $S_1$ and $S_3$ are decorated by
	the zero cocycles, and $\phi$ 
	decorates
	$S_2$. 
	Colors assigned to arcs are represented by $x, y$ for $S_1$, $z, w$ for $S_2$ and $u,v$ for $S_3$ as depicted.
		Observe that the coloring conditions on $S_2$ and $S_3$ immediately imply that $x = y$. 
		Set $z = \zeta^r$, $w = \zeta^s$, $u = \zeta^p$ and $v = \zeta^q$. 
On $S_1$, 
the coloring condition for the component labeled $x$ is  $x z^{-1} w u^{-1} v=x$, 
which is equivalent to  $\zeta^{s-r+q-p} = 1$,  and the $y$ component has the same condition.
		For each arbitrary choice of $r, s, p$ there exists a unique $q$ such that $\zeta^{s-r+q-p} = 1$. Therefore, also considering that $x = y$ can be chosen freely, it follows that there are $n^4$ colorings. 
	
	From the definition of the cocycle invariant we have that, setting $\bar \phi = (0,\phi, 0)$, $\Psi_{\bar \phi}(S)$ is a sum of six tensor products, the first two entries corresponding to the two boundary components of $S_1$ and, similarly, the other entries corresponding to the boundary components of $S_2$ and $S_3$. For each coloring, the Boltzmann weight on the boundary components of $S_2$ and $S_3$ are always trivial, since $\phi$ is non-degenerate and $x = y$. The Boltzmann weights on $S_1$ are identical for 
	both boundary components and are determined by 
	$z^{-1}w=\zeta^{s-r}$.
	The definition of $\psi=\sum_{i=0}^{n-1} i \psi_i$ is written multiplicatively
	 $\psi=\prod_{i=0}^{n-1} ( \psi_i )^i$, and $\psi_i(x, \zeta^r, \zeta^s)=g^{i}$ if and only if $i=s-r$ for any $x$,
	 where we have that $g$ is a multiplicative generator of $A$.
	Hence $\psi(x,z,w)=g^i$ for all $x$ and $z=\zeta^r$, $w=\zeta^s$ and $i=s-r$. 
	For each given $i$, there are  $n$ choices for $r$ and $s$ such that $s-r=i$, and for each choice of 
	$(r,s)$, there are  $n$ choices for $p$ and $q$ such that $\zeta^{s-r+q-p} = 1$,
	and independently there are $n$ choices for $x$. 
	Thus we obtain 
	$ \Psi_{\bar \phi}(S) = n^3 \sum_{i=0}^{n-1} ( g^i )^{\otimes 2} \otimes e^{\otimes 4} $.

}
\end{example}

From Theorem~\ref{thm:heap} and the preceding example, we have the following.

\begin{corollary}
We have $\bar H^{2,n}_{\rm RA} (\Z_m,  \Z_m) \neq 0$ for all $m>1$ and $n \geq 3$.
\end{corollary}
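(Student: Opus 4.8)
The plan is to derive the corollary directly from Example~\ref{ex:minimalnontrivial} together with Theorem~\ref{thm:cocyinvariant}. The surface ribbon $S=S_1\cup S_2\cup S_3$ of that example has exactly $n=3$ connected components, so the cocycle invariant is evaluated on a triple of pairwise mutually distributive RA cocycles of $\Z_m$ with coefficients in $\Z_m$, and the example exhibits such a triple, namely $\bar\phi=(0,\phi,0)$, whose invariant value is
$$
\Psi_{\bar\phi}(S)=m^3\sum_{i=0}^{m-1}(g^i)^{\otimes 2}\otimes e^{\otimes 4}.
$$
First I would observe that this element of the relevant symmetric algebra is \emph{not} an integer multiple of the all-trivial vector: the tensor factors corresponding to the two boundary components of $S_1$ carry the nontrivial terms $g^i$ for $i\neq 0$. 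Then, by the last assertion of Theorem~\ref{thm:cocyinvariant}, a labeled coboundary would produce only an integer multiple of the vector with trivial tensors in all entries; since $\Psi_{\bar\phi}(S)$ is not of this form, the cohomology class $[\bar\phi]\in \bar H^{2,3}_{\rm RA}(\Z_m,\Z_m)$ must be nonzero. Hence $\bar H^{2,3}_{\rm RA}(\Z_m,\Z_m)\neq 0$ for all $m>1$.

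Next I would handle the case $n>3$. Here the natural move is to pad the example with extra trivial components: given $n>3$, take the surface ribbon $S'=S\sqcup(\sqcup_{n-3}\mathbb D)$ obtained by adding $n-3$ disjoint disks, so that $S'$ has $n$ connected components, and decorate the new components by the zero cocycle, yielding the length-$n$ sequence $\bar\phi'=(0,\phi,0,0,\dots,0)$. This is still a sequence of pairwise mutually distributive RA cocycles, since the zero cocycle is RA and is mutually distributive with the separable cocycle $\phi$ and with itself. By Lemma~\ref{lem:disjoint} (and the fact that disks contribute only trivially to colorings and weights, as in Example~\ref{ex:cocyloopband}), the invariant $\Psi_{\bar\phi'}(S')$ equals $\Psi_{\bar\phi}(S)$ tensored with some trivial factors coming from the disks; in particular it still has the nontrivial $g^i$ terms in the $S_1$-entries, so the argument above again forces $[\bar\phi']\neq 0$ in $\bar H^{2,n}_{\rm RA}(\Z_m,\Z_m)$. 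This completes the corollary.

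The only genuine subtlety — and the step I would be most careful about — is verifying that $\Psi_{\bar\phi}(S)$ really is not a scalar multiple of the all-$e$ vector, i.e.\ that the nontrivial $g^i$ terms do not somehow recombine into a trivial element of the symmetric algebra $S(\Z[A])$. Because the coefficient $m^3$ is nonzero in $\Z$ and the monomials $(g^i)^{\otimes 2}\otimes e^{\otimes 4}$ for distinct $i$ are linearly independent basis elements of the symmetric algebra (up to the allowed permutations of tensor factors), the sum $m^3\sum_{i=0}^{m-1}(g^i)^{\otimes 2}\otimes e^{\otimes 4}$ has a nonzero coefficient on, say, $(g)^{\otimes 2}\otimes e^{\otimes 4}$, while any integer multiple of $e^{\otimes 6}$ has coefficient zero there. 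So the two cannot coincide, and the contradiction with Theorem~\ref{thm:cocyinvariant} is genuine. Everything else is bookkeeping: confirming the length of the cocycle sequence matches the number of components, and that adding trivial disk components preserves the RA and mutual-distributivity hypotheses.
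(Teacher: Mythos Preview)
Your argument is correct and follows the paper's intended approach: use the nontrivial invariant value from Example~\ref{ex:minimalnontrivial} together with Theorem~\ref{thm:cocyinvariant} (the paper's one-line justification cites Theorem~\ref{thm:heap}, but this appears to be a typographical slip --- the relevant input is precisely the statement you invoke, that a labeled coboundary yields only the trivial tensor).

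For the extension to $n>3$, your disk-padding argument works, though it can be streamlined. Once you know $(0,\phi,0)$ represents a nonzero class, you can simply observe that if the padded tuple $(0,\phi,0,0,\ldots,0)$ were a coboundary in $\bar H^{2,n}_{\rm RA}$, then by definition there would exist a single $f\in C^1_{\rm SD}(\Z_m,\Z_m)$ with $\delta f=0$ and $\delta f=\phi$ simultaneously, forcing $\phi=0$ as a cochain --- which it is not. This avoids having to verify the behavior of the invariant on added disks (which, strictly speaking, are handled as a special case outside the trivalent-graph framework; if you prefer to stay geometric, split unknotted annuli serve the same purpose with no such caveat). Either route is fine, and your careful check that $m^3\sum_i (g^i)^{\otimes2}\otimes e^{\otimes4}$ is not an integer multiple of $e^{\otimes6}$ in the symmetric algebra is exactly the verification the paper leaves implicit.
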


		We present below a realization result for surface ribbons with non-trivial invariant.

		\begin{figure}[htb]
			\begin{center}
				\includegraphics[width=1.5in]{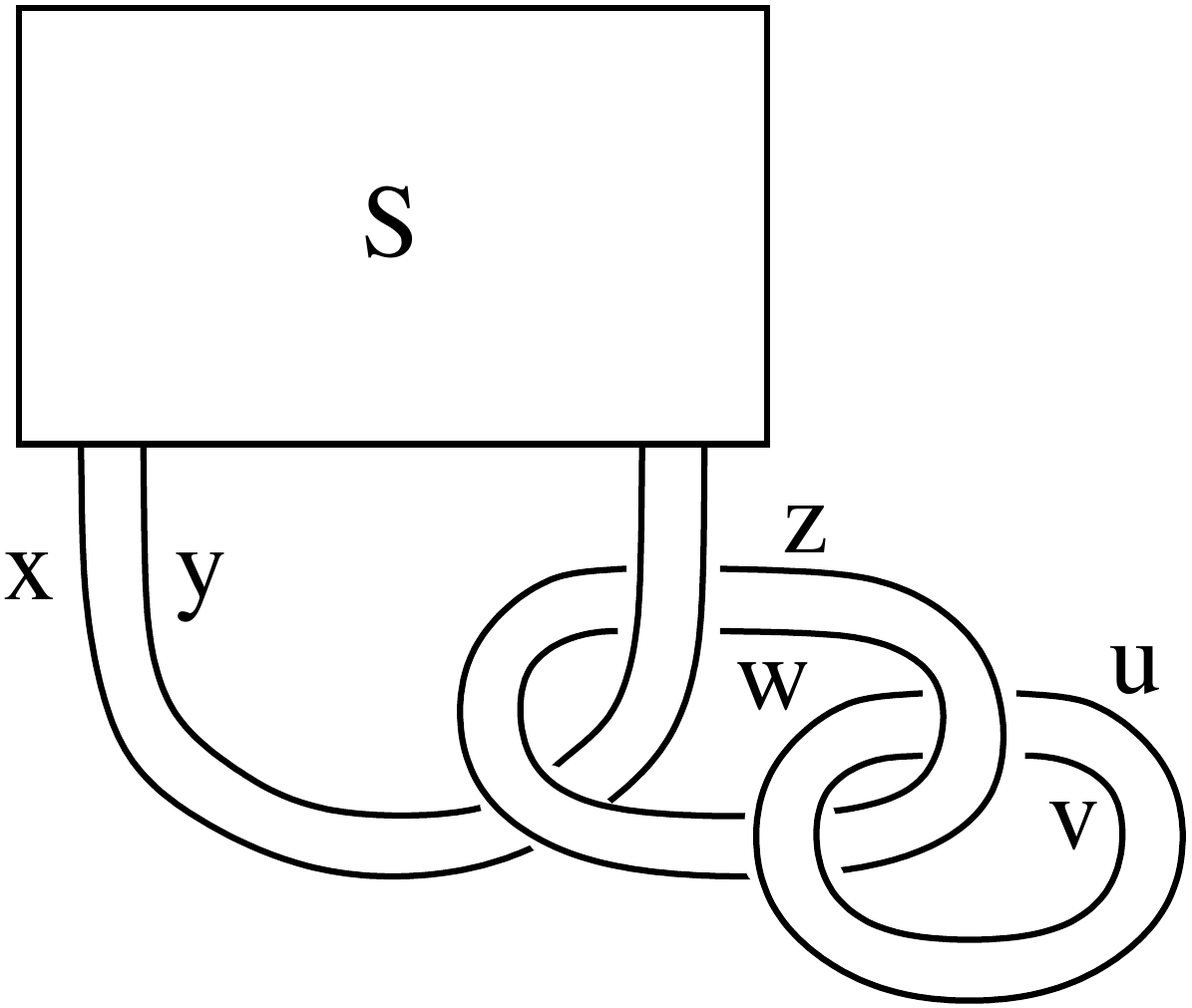}
			\end{center}
			\caption{Linking two rings} 
			\label{Hopfribbon}
		\end{figure}

\begin{proposition}
The following 
statements 
hold.
\begin{itemize}
	\item[(A)] 
	For every $n\geq 3$ and pairs $(g_i,b_i)$ with $g_i\geq 0$ and $b_i\geq 0$ for each $i = 1, \ldots , n$, there exists
	 an $n$-component non-split surface ribbon $S = S_1\cup \cdots \cup S_n$ with nontrivial cocycle invariant
	 for some group heap $X$ and coefficient group $A$, such that $S_i$ has genus $g_i$ and $b_i$ boundary components. 
	\item[(B)]
	Let $X$ be a heap, $A$ an abelian group and let $\bar \psi = (\psi_1, \ldots, \psi_n)$ be an $n$-tuple of mutually distributive (non-degenerate) cocycles. Suppose that $S = S_1 \cup \cdots \cup S_n$ is an $n$-component surface ribbon, where $S_i$ has genus $g_i$ and $b_i$ boundary components, such that $\Psi_{\bar \psi}(S)$ is nontrivial. Then there exists a  surface ribbon with nontrivial invariant with $n'$ connected components and $(g'_i,b'_i)$ for $i = 1, \ldots , n'$ such that:
	\begin{itemize}
		\item[(i)]
		$n'\geq n+2$, $g'_i \geq g_i$, $b'_i\geq b_i$ with $g'_{n+1}, g'_{n+2} \geq 0$ and $b'_{n+1}, b'_{n+2} \geq 0$;
		\item[(ii)]
		$n' \geq n$, $b_i' \geq  b_i$, and there exists $j$ such that $g_j'> g_j$, and for all $i \neq j$, $g_i' \geq g_i$. 
	\end{itemize}
\end{itemize}
\end{proposition}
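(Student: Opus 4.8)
\smallskip
\noindent\emph{Proof plan.}
The plan is to bootstrap everything from the three--component example of Example~\ref{ex:minimalnontrivial} and to move between genera and boundary numbers with twisted band additions. The structural fact I would isolate first is a \textbf{local principle}: if $S'$ is obtained from a surface ribbon $S$ by adding a twisted band inside a small ball meeting only the component $S_j$, then \textup{(a)} the colourings of $S$ and $S'$ are in canonical bijection, the new arcs receiving labels forced by the two labels $x,y$ of $S_j$ at the attaching site, exactly as in Lemma~\ref{lem:adloop}; \textup{(b)} for every component $S_k$ with $k\neq j$, every crossing along every boundary curve of $S_k$, together with the colours read there, is unchanged, hence the $S_k$--entry of $\Psi_{\bar\psi}$ is unchanged; and \textup{(c)} if $S_j$ is monochromatic in every colouring, then, by non--degeneracy of the RA cocycles, $\Psi_{\bar\psi}$ is unchanged outright. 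I would also record that adding a twisted band along one boundary curve sends $(g,b)\mapsto(g,b+1)$ and adding one joining two distinct boundary curves sends $(g,b)\mapsto(g+1,b-1)$ (as in the proof of Proposition~\ref{prop:realize}); iterating these two moves on an annulus $(g,b)=(0,2)$ reaches every pair $(g,b)$ with $b\geq 1$ and $(g,b)\neq(0,1)$, the disk being necessarily split from the rest.

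\smallskip
\noindent\textbf{Part (A).} I would replace the three rings of Figure~\ref{rings} by a necklace: a central annulus $S_1$ together with annuli $S_2,\dots,S_n$, each Hopf--clasped with $S_1$ in the way $S_3$ is in Example~\ref{ex:minimalnontrivial}, and pairwise unlinked. Take $X=A=\Z_m$ with $m>1$, decorate $S_2$ with the separable RA cocycle $\phi$ of Example~\ref{ex:RAcompatibleb} and every other $S_i$ with the zero cocycle; since a separable cocycle and the zero cocycle are mutually distributive, $\bar\psi$ is a pairwise mutually distributive $n$--tuple of RA cocycles. The colouring computation of Example~\ref{ex:minimalnontrivial} goes through unchanged: the clasps force $S_1$ to be monochromatic of colour $x$, each $S_j$ underpasses $S_1$ once (contributing trivially, by non--degeneracy), $S_1$ underpasses each $S_j$ once, and the only surviving relation is $\prod_{j=2}^{n}z_j^{-1}w_j=1$; writing $z_2=\zeta^{r}$, $w_2=\zeta^{s}$, each boundary curve of $S_1$ carries the weight $\phi(x,z_2,w_2)=g^{\,s-r}$, and a count identical to that of Example~\ref{ex:minimalnontrivial} gives
\[
\Psi_{\bar\psi}(S)=m^{\,2n-3}\sum_{i=0}^{m-1}\bigl(g^{i}\bigr)^{\otimes 2}\otimes e^{\otimes 2(n-1)},
\]
which is nontrivial, and $S$ is non--split because every component is clasped with $S_1$. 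Finally, for each $i$ I would add twisted bands inside a ball meeting only $S_i$ until $S_i$ has genus $g_i$ and $b_i$ boundary curves: for $i=1$ this changes nothing, by \textup{(c)}, since $S_1$ is monochromatic; for $i\geq 2$ it leaves the nontrivial $S_1$--entry untouched, by \textup{(b)}. Excluding $(g_i,b_i)=(0,1)$ (a disk component is split from everything), this realizes the prescribed data with nontrivial invariant.

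\smallskip
\noindent\textbf{Part (B).} Let $S=S_1\cup\dots\cup S_n$ have $\Psi_{\bar\psi}(S)$ nontrivial, say because the $S_k$--entry is nontrivial. For \textup{(ii)}, choose $j\neq k$ (possible whenever $n\geq 2$, which covers the known nontrivial examples, where $n\geq 3$) and add to $S_j$, inside a ball meeting only $S_j$, one twisted band along a boundary curve and one joining two distinct boundary curves; the net effect is $g_j\mapsto g_j+1$, $b_j\mapsto b_j$, while by \textup{(b)} the $S_k$--entry, hence nontriviality, is preserved — so $n'=n$, $g_j'=g_j+1$, $g_i'=g_i$ and $b_i'=b_i$ for $i\neq j$ works. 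For \textup{(i)}, keep $S_1,\dots,S_n$ and adjoin two new components $S_{n+1},S_{n+2}$ of the desired $(g,b)$, arranged as a short clasped chain attached to some $S_i$ with $i\neq k$ and decorated with $\psi_1$ (mutually distributive with itself by the $2$--cocycle identity, hence with the rest of $\bar\psi$); by \textup{(b)} the $S_k$--entry is again untouched, and if one wants $g_i'>g_i$ or $b_i'>b_i$ for some $i\leq n$ with $i\neq k$, one applies the Part~(A) enlargement to $S_i$. This gives $n'=n+2$ with the stated inequalities.

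\smallskip
\noindent\textbf{Main obstacle.} The only genuine work is the local principle, above all items \textup{(b)} and \textup{(c)}: a diagram chase over the self--crossings of the added band, where non--degeneracy and the reversibility and additivity identities of the RA cocycles are used exactly as in the verification of the RII, CL, YI and IY moves in the proof of Theorem~\ref{thm:cocyinvariant}. The residual cases — enlarging the witnessing component itself, or starting from a putative nontrivial example with very few components — require the full strength of \textup{(c)} (the twist's self--crossing weights cancelling by reversibility) along a boundary curve disjoint from the finitely many crossings that produce the nontrivial tensor. A purely bookkeeping point is tracking how many tensor factors and vector entries appear after the modifications, and discarding the degenerate data $b_i=0$ and $(g_i,b_i)=(0,1)$, where a component is closed or a disk and cannot belong to a non--split surface ribbon.
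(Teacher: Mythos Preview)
Your approach to (A) and to (B)(ii) is correct and close in spirit to the paper's. Both build from the three--ring example of Example~\ref{ex:minimalnontrivial} (you pass directly to an $n$--ring necklace, the paper adds annular rings one at a time under a monochromaticity hypothesis~$(**)$), and both adjust $(g_i,b_i)$ by local band additions (you use twisted bands as in Lemma~\ref{lem:adloop}, the paper uses trivial bands and crossed band pairs via boundary connected sum and Remark~\ref{pro:connectedcocy}). Your local principle (a)--(c) is a clean packaging of these moves, and for (B)(ii) modifying a component $S_j$ with $j\neq k$ by twisted bands is in fact simpler than what the paper does.

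There is, however, a genuine gap in your argument for (B)(i). Your principle (b) is stated and justified for \emph{adding a twisted band} to $S_j$: there the colourings of $S$ and $S'$ are in canonical bijection by (a), so for each colouring the $S_k$--entry is computed from exactly the same crossings with the same colours, and the sum over colourings is unchanged. In (B)(i) you are not adding a twisted band; you are Hopf--clasping new annular components $S_{n+1},S_{n+2}$ onto $S_i$. That changes the colouring space: the boundary curves of $S_{n+1}$ underpass the ribbon of $S_i$, and their closing conditions force the ribbon term of $S_i$ at the clasp to be trivial---precisely the mechanism that makes $S_1$ monochromatic in Example~\ref{ex:minimalnontrivial}. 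Colourings of $S$ in which $S_i$ is not monochromatic at that site do not extend to $S'$, so the sum $\sum_{\mathcal C'}\Psi^k(\mathcal C',S')$ is taken over a different set than $\sum_{\mathcal C}\Psi^k(\mathcal C,S)$, and nothing guarantees nontriviality survives. Concretely: in your own necklace, clasping the new chain to $S_2$ forces $z_2=w_2$, whence $\phi(x,z_2,w_2)=e$ by nondegeneracy, and the $S_1$--entry becomes trivial.

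The paper's route for (B)(i) is different: it attaches the two new rings to a handle of the \emph{nontrivial} component $S_j$ itself (Figure~\ref{Hopfribbon}), then exhibits an explicit extension of one fixed nontrivial colouring of $S$ by setting $z=w=u=1$, $v=x^{-1}y$ on the new rings, and argues that this single summand is not cancelled. Rather than preserving the whole $S_k$--entry, it tracks one colouring through the construction. Your scheme can be repaired in this direction, but the bare appeal to (b) does not carry it.
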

\begin{proof}	
	Let $S = S_1 \cup \cdots \cup S_n$ be a non-split surface ribbon with $n$ connected components. Let $X$ be a heap and $A$ an abelian group in general. Suppose that $\Psi_{\bar \psi}(S)$ is nontrivial for some choice of $n$ decorating mutually distributive separable cocycles $\bar \psi = (\psi_1, \ldots , \psi_n)$. Since $S$ is non-split, none of the components is a disk,  hence $H_1(S_i) \neq 0$ so that there is  a nontrivial handle for all $i$. Let us assume further the condition that:
	
	(**) There is a 
	coloring ${\mathcal C}$ of $S$ such that it is monochromatic on $S_j$ and 
	 $\Psi_{\bar \psi}(S)$ is non-trivial. 
	
	Let $S' = S \cup R_1 \cup \cdots \cup R_m$ 
	be the surface ribbon obtained from $S$ by linking  
	annular 
	rings $R_1, \ldots, R_m$ on $S_j$
	(as  $S_2$ and $S_3$ linking $S_1$ in Figure~\ref{rings}).
	Let  ${\mathcal C}'$ be a coloring of $S'$ 
	 obtained by extending ${\mathcal C}$ with monochromatic colors on $R_1, \ldots, R_m$. 
	 Let  $\bar \psi'$ be obtained from $\bar \psi$ by appending $m$ zero cocycles decorating each ribbon ring $R_k$.
	 Then  we have that the invariant $\Psi_{\bar \psi'}(S')$ is nontrivial.
It follows that for each $k\geq n$ there is a non-split  surface ribbon with $k$ connected components whose corresponding cocycle invariant is nontrivial. 

Let $S= S_1\cup \cdots \cup S_n$ be a surface with nontrivial cocycle invariant satisfying the same conditions as above. Let us denote by $(g_i,b_i)$ the genus and the number of boundary components.
 By adding trivial bands to $S_i$ as in Figure~\ref{trivribbon} (A), we can increase the number of connected components $b_i$ of $S_i$ arbitrarily.
 Similarly, by attaching trivial torus band pairs as in Figure~\ref{trivribbon} (B), we can increase 
  the genus $g_i$ arbitrarily. 
  Applying Remark~\ref{pro:connectedcocy} we see that in both cases the invariant does not change and, therefore, it is nontrivial under either procedure.

 The surface of Example~\ref{ex:minimalnontrivial}  satisfies the required condition (**)  with $n = 3$ and $X=\Z_\ell=A$. Moreover, for $i = 1,2,3$ we have that $(g_i, b_i)=(0,2)$. From the preceding argument, then, it follows that there exists a non-split $k$-component surface ribbon with nontrivial cocycle invariant for every prescribed choice of 
 $n\geq 3$ 
  and for each choice of 
  $n$ 
  pairs $(g_i,b_i)$ with $g_i\geq 0$ and $b_i \geq 2$. This completes the proof of (A).
 
 Let us now prove (B). For the statement of (i), as before, we have that $S$ is nontrivial and 
 at least one connected component contributes a non-trivial value to
 $\Psi_{\bar \psi } (S)$.
 Suppose this is $S_j$. Since $S_j$ has a handle, 
 we add two ribbon surfaces as in Figure~\ref{Hopfribbon} and denote the resulting surface by $S'$. Let us decorate both added surfaces by some (non-degenerate by hypothesis) $\psi_i$. Denote $\bar \psi'$ the $(n+2)$-tuple obtained by adding two $\psi_i$ to $\bar \psi$. Since $\Psi_{\bar \psi}(S)$ is nontrivial, there is a coloring whose corresponding Boltzmann weight is nontrivial. Let us indicate by $x, y $ the corresponding colors of the arcs of  
 $S_j$. Then, taking in Figure~\ref{Hopfribbon} $z = w = u = 1$ and $v = x^{-1}y$, we obtain a coloring of $S'$, whose associated Boltzmann weight is nontrivial. Since this summand in the cocycle invariant is not canceled by other weights, it follows that $\Psi_{\bar \psi'}(S')$ is nontrivial. Observe that $S'$ has $n+2$ connected components, it has $(g_i,b_i)$ unchanged with respect to $S$ for all $i = 1, \ldots , n$, and it has $g_{n+1} = g_{n+2} = 0$, $b_{n+1} = b_{n+2} = 2$. Moreover, it satisfies the hypothesis $(**)$ 
 of part (A), with respect to the $(n+1)^{\rm st}$ component. We can apply part (A) to complete the proof of (i). To prove part (ii), let $S_j$ be a connected component of $S$ with a nontrivial handle. Observe that we can add to $S_j$ a torus band pair where the rightmost foot is monochromatic, augmenting the genus of $S$ by one unit. Now we can proceed as in (A) to complete.
\end{proof}

\begin{remark}\label{pro:connectedcocy}
{\rm 

Let us now consider 
the cocycle invariant under boundary connected sum.
Let $S_1 = S_1^1 \cup \cdots \cup S_1^n$ and $S_2 = S_2^1 \cup \cdots \cup S_2^m$,
be surface ribbons 
 with cocycle invariants $\Psi_{\bar \psi}(S_1)$ and $\Psi_{\bar \phi}(S_2)$, respectively, for some additive and reversible cocycle tuples of mutually distributive cocycles $\bar \psi, \bar \phi \in \bar Z^2_{\rm RA}(X,A)$ where $X$ is a heap, and $A$ is an abelian group. Suppose that $\psi_r = \phi_s$, and that the corresponding connected component  $S^r_1$ has $b$ boundary components, while $S^s_2$ has $b'$. When we perform the boundary connected sum of $S_1$ and $S_2$, with respect to $S_1^r$ and $S_2^s$, this construction is applied to two boundary components, which we assume being the $i^{\rm th}$ and $j^{\rm th}$ ones for $S^r_1$ and $S^s_2$, respectively. 
 In this situation, the connected components of $S_1\natural S_2$ are ordered as $(S_1^r\natural S_2^s, S_1^1, \ldots,\hat{S}_1^i, \ldots, S_1^n, S_2^1, \ldots , \hat{S}_2^j,\ldots S_2^m)$, where the symbol $\hat{\ }$ indicates omission of the component.
 The cocycles decorating the connected components are arranged in the tuple $\bar \psi\cdot_{rs} \bar \phi := (\psi_i, \psi_1, \ldots, \hat \psi_i 
 \ldots, \psi_n, \phi_1, \ldots, 
 \hat \phi_i 
 \ldots, \phi_m)$, where $\psi_i = \phi_j$ by assumption. 
Observe that the cocycle invariant associated to each connected component different from $S_1^r$ and $S_2^s$ remains unaltered from this procedure. 
Therefore, 
we can focus on the computation of the tensor component relative to $S_1^r\natural S_2^s$. For simplicity, we omit referring to the components that are unchanged, and for $x\in X$, we denote by $\mathcal C_1(x)$, the colorings of $S_1^r$, and similarly for $S_2^s$, where $x$ is the color assigned to the arc $b_1$ and $b_2$ of Figure~\ref{connect} (A). Certain choices of $x$ might not admit colorings $\mathcal C_i(x)$ for either value of $i$, depending on the surface ribbon that is being considered.
Let us define an element of the symmetric algebra 
$ S(\Z[A])$
as follows. 
If we have that 
$$\Psi^r_{\bar \psi} (S_1) = \sum_{\mathcal C_1} \otimes_t \prod_{k_1} B_{x_t(k_1)}(\mathcal C_1, \tau_t(k_1)) \quad {\rm  and } \quad
\Psi^s_{\bar \phi} (S_2) = \sum_{\mathcal C_2} \otimes_l \prod_{k_2} B_{x_l( k_2 )}(\mathcal C_2, \tau_l( k_2 ))$$
 we set 
\begin{eqnarray*}
\lefteqn{\Psi^r_{\bar \psi}(S_1) \cdot_{ij} \Psi^s_{\bar \phi}(S_2) }\\
&:=& \sum_x\sum_{\mathcal C_1(x), \mathcal C_2(x)} 
\left[ 
\prod_{k_1} B_{x_i(k_1)}(\mathcal C_1(x),\tau_i(k_1) ) 
\cdot \prod_\ell B_{x_j(k_2)}(\mathcal C_2(x), \tau_j(k_2))
\right]
\\
&&\otimes_{t\neq i} \prod_{k_1} B_{x_t(k_1)}(\mathcal C_1(x), \tau_t(k_1)) \otimes_{l\neq j} \prod_{k_2} B_{x_l(k_2)}(\mathcal C_2(x), \tau_l(k_2))),
\end{eqnarray*}
where $\tau$'s and $x_t(k_1)$,  
$x_l (k_2)$ 
follow the same conventions of Definition~\ref{def:cocyinvariant}. 
 In other words, $\Psi^r_{\bar \psi}(S_1) \cdot_{ij} \Psi^s_{\bar \phi}(S_2)$ is obtained from the cocycle invariants by juxtaposing the tensors corresponding to all the boundary components different from $i$ and $j$, while these latter entries are multiplied together to give a single entry in the tensor product. The sum runs over all colorings $\mathcal C_1$ of $S^r_1$ and $\mathcal C_2$ of $S^s_2$ assigning the same value $x$ to the arcs $b_1$ and $b_2$, and then $x$ is taken over all elements of $X$. By convention, if such a coloring does not exist, the corresponding summand is zero. 
 
 A coloring of a ribbon surface $S$ by a heap $X$ is the same as a heap morphism from the fundamental heap $h(S)$ to $X$. Applying Proposition~\ref{pro:connected} we see that  morphisms from the free product of the reduced heaps of $S_1$ and $S_2$ induce morphisms of $h(S_1\natural S_2)$, and therefore colorings of $S_1\natural S_2$ by $X$. 
 However, 
there may be 
colorings that do not arise in this way. 
 They are colorings such that the arcs corresponding to $b_1$ and $b_2$ have distinct colorings, that correspond to $x_1'$ and $x_2'$ in the proof of Proposition~\ref{pro:connected},
 that do not come from colorings of $S_1$ and $S_2$, as discussed in the proof below in more details. 
 We say that these are the {\it residual colorings} of $S_1\natural S_2$. They are characterized by the fact that they do not factor through the free product $\hat h(S_1)* \hat h(S_2)$. 
 We examine the (hypothetical) invariant factors if there is a non-empty residual colorings.

	Let $S_1$ and $S_2$ be as above, and let $\Psi^r_{\bar \psi}(S_1)$ and $\Psi^s_{\bar \phi}(S_2)$ be their cocycle invariants componentes relative to $S_1^r$ and $S_2^s$, respectively. Suppose that $S_1\natural S_2$ is the boundary connected sum along the $i^{\rm th}$ boundary component of $S^r_1$ and the $j^{\rm th}$ boundary component of $S^s_2$. Then we have
	$$
	\Psi^k_{\bar{\psi}\cdot_{rs}\bar{\phi}}(S_1\natural S_2) =
	\begin{cases}
	 \Psi^r_{\bar \psi}(S_1)\cdot_{ij} \Psi^s_{\bar \phi}(S_2) + \sum_{\mathcal C^*} \otimes_{r = 1}^{b+b'-1} \prod_d B_{x_r(d)}(\mathcal C^*, \tau_r(d))\ \ {\rm if}\ \ k = 1\\
	 \Psi^{k-1}_{\bar \psi}(S_1) \ \ {\rm if}\ \ 1\neq k \leq i-1\\
	 \Psi^{k}_{\bar \psi}(S_1) \ \ {\rm if}\ \ i\leq  k \leq n\\
	 \Psi^{k}_{\bar \phi}(S_2) \ \ {\rm if}\ \ n+1\leq  k \leq n+j-1\\
	 \Psi^{k-1}_{\bar \phi}(S_2) \ \ {\rm if}\ \ n+j\leq  k \leq n+m
	\end{cases}
	$$
	where $\mathcal C^*$ denotes the residual colorings of $S_1\natural S_2$, and the ordering of the boundary components is as described above. 

	Let us now prove this claim. As observed above, the invariants relative to connected components different from $S_1^r$ and $S_2^s$ remain unchanged, with the only difference being the numbering of the connected component they refer to. This is the same as in the 
	formula stated above. 
	Let us consider $S^r_1\natural S^s_2$ as in Figure~\ref{connect} (B), where $b_1$ and $b_2$ of the figure correspond to the $i^{\rm th}$ boundary component of $S_1$ and the $j^{\rm th}$ boundary component of $S_2$, respectively. 
	We call these components $b^1_i$ and $b^2_j$. 
	Colorings of $S_1\natural S_2$
	 arise from the fundamental heap as follows. For each choice of $x\in X$, one colors the upper arc of the connecting band of Figure~\ref{connect} (B) with $x$ and, proceeding counterclockwise, goes along $b_1$. The arc outgoing from $S^r_1$, 
	 which is the lower arc of the band in Figure~\ref{connect} (B), has now picked a color 
	 $x\Phi_1$ 
	 which will enter $S^s_2$. 
	 After proceeding along $b^2_j$ in $S_2$ 
	 we have a color $x\Phi_1\Phi_2$ which forces a condition $\Phi_1\Phi_2 = 1$. This is the extra relation of Proposition~\ref{pro:connected}. Denoting $\mathcal C_1(x)$ and $\mathcal C_2(x)$ colorings of $S_1$ and $S_2$ 
	 such that $x$ is the color assigned to the arcs of $b^1_i$ and $b^2_j$, we see that they give rise to a coloring of $S_1\natural S_2$ 
	 since $\mathcal C_1(x)$ and $\mathcal C_2(x)$ would both map $\Phi_1$ and $\Phi_2$ to $1$. These colorings factor through the free product $\hat h(S_1)*\hat h(S_2)$ since $\Phi_i$ are relators of $\hat h(S_i)$. The contribution of these colorings to the cocycle invariant is obtained by multiplying the tensor entries corresponding to the boundary components $b^1_i$ and $b^2_j$. In fact, since they are now connected through a band, they constitute a single boundary component and ordering the arcs counterclockwise, as in the preceding considerations regarding the colorings, one obtains a product of Boltzmann weights that has all the crossings of $S_1$ first, and all the crossings of $S_2$ afterwards. The other boundary components are not modified by this procedure. We obtain, letting $x$ vary in $X$, the term $\Psi^r_{\bar \psi}(S_1)\cdot_{ij}\Psi^s_{\bar \phi}(S_2)$ 
	 in the statement. The contribution given by colorings that do not map $\Phi_i$ to $1$, but that map $\Phi_1\Phi_2$ to $1$, is due to the residual colorings by definition. This is the second summand appearing in the formula above. 
%

  We observe that the residual colorings of a boundary connected sum of surfaces are automatically trivial if $\Gamma$ of Proposition~\ref{pro:connected} is an isomorphism. Therefore, if $\Gamma$ is monic, it would follow that the formula in Remark~\ref{pro:connectedcocy} (for the $r$ component) would reduce to the product of cocycle invariants $\cdot_{ij}$. It is not clear, at this point, whether $\Gamma$ is always an isomorphism or not. 
}
\end{remark}

Although it is beyond the scope of this paper, it is desirable to investigate 
further properties of the fundamental heap and cocycle invariants of surface ribbons, 
and relations to other invariants of surfaces.

   \bigskip        	

\noindent
{\bf Acknowledgements.} 
MS was supported in part by NSF DMS-1800443.
EZ was 
supported by the Estonian Research Council via the Mobilitas Pluss scheme, grant MOBJD679.

\end{document}